\newtheorem{theorem}{Theorem}
\newtheorem*{theorem*}{Theorem}
\numberwithin{theorem}{section}
\newtheorem{proposition}[theorem]{Proposition}
\newtheorem{corollary}[theorem]{Corollary}
\newtheorem{lemma}[theorem]{Lemma}
\newtheorem{question}[theorem]{Question}
\theoremstyle{definition}
\newtheorem{definition}[theorem]{Definition}
\newtheorem*{definition*}{Definition}
\newtheorem{example}[theorem]{Example}
 \newtheorem{problem}[theorem]{Problem}
\newcommand{\zz}{\mathbb{Z}}
\newcommand{\rr}{\mathbb{R}}
\newcommand{\kk}{\mathbb{K}}
\newcommand{\ca}{\mathcal{A}}
\newcommand{\cm}{\mathcal{M}}
\newcommand{\ct}{\mathcal{T}}
\newcommand{\cg}{\mathcal{G}}
\newcommand{\ch}{\mathcal{H}}
\newcommand{\patp}{\mathcal{P}}
\DeclareMathOperator{\CIM}{CIM}
\DeclareMathOperator{\cim}{CIM}
\DeclareMathOperator{\init}{in}
\DeclareMathOperator{\lift}{Lift}
\DeclareMathOperator{\pa}{pa}
\DeclareMathOperator{\conv}{conv}
\DeclareMathOperator{\meq}{Pat}
\DeclareMathOperator{\parting}{part}
\begin{document}

\title[Quasi-Independence Gluing]{Toric Ideals of Characteristic Imsets via Quasi-Independence Gluing}

\date{\today}

\author{Benjamin Hollering}
\address{Max Planck Institute for Mathematics in the Sciences \\
Inselstra{\ss}e 22, 04103 Leipzig, Germany}
\email{benjamin.hollering@mis.mpg.de}

\author{Joseph Johnson}
\address{North Carolina State University, Raleigh, NC 27695}
\email{jwjohns5@ncsu.edu}

\author{Irem Portakal}
\address{Technische Universit\"at M\"unchen, 85748 Garching b. München, Boltzmannstr. 3.,  Germany}
\email{mail@irem-portakal.de}

\author{Liam Solus}
\address{Institutionen f\"or Matematik, KTH, SE-100 44 Stockholm, Sweden}
\email{solus@kth.se}

\begin{abstract}
Characteristic imsets are 0-1 vectors which correspond to Markov equivalence classes of directed acyclic graphs. The study of their convex hull, named the characteristic imset polytope, has led to new and interesting geometric perspectives on the important problem of causal discovery. In this paper we begin the study of the associated toric ideal.  We develop a new generalization of the toric fiber product, which we call a quasi-independence gluing, and show that under certain combinatorial homogeneity conditions, one can iteratively compute a Gr\"obner basis via lifting. For faces of the characteristic imset polytope associated to trees, we apply this technique to compute a Gr\"obner basis for the associated toric ideal. We end with a study of the characteristic ideal of the cycle and propose directions for future work.

%\textcolor{red}{L: Goals: (1) Find a quadratic and square-free Grobner basis for $\CIM_G$ for $G$ a tree. (2) What is the degree of this variety? (3) What do the $h^\ast$-polynomials look like? ... at least some examples other than the path graph?}
\end{abstract}

\keywords{characteristic imset, quasi-independence, toric fiber product, polytope}
\subjclass[2020]{62E10, 62H22, 62D20, 62R01, 13P25, 13P10}

\maketitle
\thispagestyle{empty}

%---SECTION: Introduction---
\section{Introduction}
\label{sec: intro}
% Directed ayclic graphs are widely used throughout the sciences to model conditional independence and causal relationships between random variables {\color{purple} TOCITE}. 
Given a directed acyclic graph (DAG) $\cg = ([n], E)$ with vertices $[n] := \{1,\ldots, n\}$ and edges $E$ and a collection of jointly distributed random variables $(X_1,\ldots, X_n)$ with probability density function $f(x_1,\ldots, x_n)$, we say that $f(x_1,\ldots, x_n)$ is \emph{Markov} to the DAG $\cg$ if
\[
f(x_1,\ldots, x_n) = \prod_{i=1}^nf(x_i | x_{\pa_\cg(i)}),
\]
where $\pa_\cg(i) = \{j \in [n] : j\rightarrow i\in E\}$ is the set of \emph{parents} of $i$ in $\cg$.  

Directed acyclic graphical models play a fundamental role in modern data science and artificial intelligence through their applications in probabilistic inference \cite{koller2009probabilistic} and causality \cite{pearl2009causality}. In such fields, the combinatorics of the graph can be used to determine complexity bounds for probabilistic inference algorithms \cite[Chapter 9]{koller2009probabilistic} and estimators for the causal effect of one variable in the system on another \cite[Chapter 3]{pearl2009causality}. Given these applications, it is valuable to have a DAG representation of a data-generating distribution, especially if such a representation can be learned directly from data. Unfortunately, many different DAGs may encode the same set of conditional independence (CI) statements \cite{andersson1997markov, verma1990equivalence, verma1992algorithm}. Two such DAGs which encode the same set of CI statements are called \emph{Markov equivalent} and so the main goal is to recover the Markov equivalence class of the DAG which generated the data. 
This is the basic problem of \emph{causal discovery}.  
A wide variety of methods for doing so have been proposed and are primarily based on the combinatorics of DAGs \cite{chickering2002optimal, kuipers2022efficient, LRS2022a, solus2017consistency, spirtes1991algorithm, tsamardinos2006max}. 

More recently, efforts were made to rephrase the problem of causal discovery as a linear program by embedding DAGs as 0/1-vectors in real-Euclidean space called \emph{characteristic imsets} \cite{studeny2010characteristic}. The characteristic imset vector of a DAG uniquely encodes its Markov equivalence class.
The convex hull of all such vectors for DAGs with $n$ vertices is called the \emph{characteristic imset polytope} $\cim_n$. 
Although the linear program approach that motivated the definition of $\cim_n$ remains infeasible for even reasonably small numbers of variables, due to the fact that there is no complete characterization of the facets of $\cim_n$, the geometry of $\cim_n$ appears to play a fundamental role in the general problem of causal discovery. 
For instance, it was recently show that the moves used by popular search-based optimization causal discovery algorithms all correspond to edges of $\cim_n$; hence, such methods amount to walks along the edge graph of $\cim_n$ \cite{LRS2022a}.
In \cite{LRS2022a}, it was also shown that the polytope $\cim_G$, defined as the convex hull of all characteristic imsets $c_\cg$ where $\cg$ has adjacencies corresponding to the edges in the undirected graph $G$, is a face of $\cim_n$. 
% In \cite{LRS2022a}, it was also shown that the 
% \[
% \cim_G = \conv(c_\cg : \cg \mbox{ has skeleton $G$}) 
% \]
% is a face of $\cim_n$ for any undirected graph $G = ([n], E)$.  
% Here, the \emph{skeleton} of a DAG $\cg = ([n], E)$ is the undirected graph $G = ([n], E)$ whose edges indicate that either $i\rightarrow j$ or $i \leftarrow j$ is an edge of $\cg$. 
In \cite{LRS2022b}, the edges of $\cim_G$ for $G$ a tree are completely characterized, and it is shown that algorithms using these edges to search for an optimal Markov equivalence class with skeleton $G$ outperform classic causal discovery methods for learning directed trees. 
% {\color{purple} Insert more info on how many causal discovery algorithms are geometric and the importance of the polyhedral structure to causality}
% {\color{joegreen} We haven't said what $\cim_G$ is yet}

While significant work has already been done on the polyhedral structure of $\cim_G$, there has been little attention paid to its toric geometry. There is a strong and well-established link between the algebraic properties of toric ideals and the combinatorial properties of their associated polytopes \cite{sturmfels1996Grobner}. In this paper, we begin the study of the toric ideal $I_G$ which is naturally associated to $\cim_G$ with the hope that it can be used to better understand $\cim_G$. We begin by introducing a new generalization of the \emph{toric fiber product} of \cite{sullivant2007toric} which is defined as follows. 
\begin{definition}[Definition~\ref{def: QIG}]
Let $Q \subset [r] \times [s]$ and $I \subset \kk[x_j ~|~ j \in [r]]$ and $J \subset \kk[y_k ~|~ k \in [s]]$ be homogeneous ideals. The \emph{quasi-independence gluing} of $I$ and $J$ with respect to $Q$ is 
\[
I \times_Q J := \phi_Q^{-1}(I+J)
\]
where $\phi_Q$ is the map
\begin{align*}
\phi_Q: \kk[z_{jk} ~|~ (j, k) \in Q]& \to \kk[x_j, y_k ~|~ j \in [r] , k \in [s]]\\
 z_{jk}& \mapsto x_j y_k.
\end{align*}
\end{definition}
Similar to the toric fiber product, we show that the Gr\"obner basis of $I \times_Q J$ can be computed by lifting polynomials from the Gr\"obner bases of ideals $I$ and $J$ and adding in extra binomials which come from the monomial map $\phi_Q$. The Gr\"obner bases of the ideals $I_Q = \ker(\phi_Q)$ are completely known and have been studied in algebraic statistics \cite{aoki2012markov, coons2021quasi} and commutative algebra as the \emph{toric ideals of edge rings} of bipartite graphs \cite{de1995grobner,Herzog2018,por21,villarreal}. Unlike the toric fiber product though, we allow for $\phi_Q$ to be any monomial homomorphism which arises as the parameterization of a \emph{quasi-independence model} and do not require that $I$ and $J$ are homogeneous with respect to multigradings. 
We instead require a weaker condition which we call $Q$-homogeneity that ensures that polynomials in $I$ and $J$ can be lifted to $I \times_Q J$. 

We then show that if $T$ is any tree then the ideal $I_T$ associated to the polytope $\cim_T$ can be obtained by taking iterative QIGs of the ideals of star trees, which are always the zero ideal. As a result we obtain an explicit square-free, quadratic Gr\"obner basis for $I_T$. 
These characteristic imset ideals serve as a first example of ideals which arise via iterated quasi-independence gluings, but not as codimension 0 toric fiber products (even up to change of coordinates). 
% \textcolor{joegreen}{We should think about this claim a little more. We might be able to make a dimension argument about the variety.} 
It would be interesting to find additional examples of ideals which arise in this way, similar to the many families of ideals which come from iterated toric fiber products \cite{ananiadi2021grobner, coons2021toric, cummings2021invariants,engstrom2014multigraded,rauh2016lifting,  sturmfels2005toric, sturmfels2008toric}. 

The remainder of the paper is organized as follows. In Section \ref{sec: preliminaries} we provide some background on DAGs, characteristic imsets, quasi-independence ideals, and toric fiber products. 
In Section~\ref{sec:QIG}, we introduce quasi-independence gluings and show that their Gr\"obner bases can be computed via lifting provided the original ideals satisfy a certain technical condition. 
In Section~\ref{section:CIMTree}, we apply quasi-independence gluings to characteristic imset ideals to derive the Gr\"obner basis of the ideal when the underlying graph is a tree. In Section~\ref{section:CIMCycle}, we discuss partial results for the characteristic imset ideal of a cycle. %give combinatorial and polyhedral constraints on the weight vector needed the produce a Gr\"obner basis for the path that can be used to construct a Gr\"obner basis for the cycle. %show that the characteristic imset ideal of a cycle also arises as a quasi-independence gluing. 
%---SECTION: Preliminaries---
\section{Preliminaries}
\label{sec: preliminaries}

%---SUBSECTION: DAGs and Imsets---
\subsection{Directed Acyclic Graphs and Characteristic Imset Polytopes}
\label{subsection:DAGandCIM}
Recall that given a directed acyclic graph (DAG) $\cg = ([n], E)$ and a collection of jointly distributed random variables $(X_1,\ldots, X_n)$ the model $\cm(\cg)$ consists of all densities $f(x_1,\ldots, x_n)$ such that
\[
f(x_1,\ldots, x_n) = \prod_{i=1}^nf(x_i | x_{\pa_\cg(i)}),
\]
where $\pa_\cg(i) = \{j \in [n] : j\rightarrow i\in E\}$ is the set of \emph{parents} of $i$ in $\cg$.  Any density $f$ that satisfies the above equation is said to be \emph{Markov} to the graph $\cg$. 

%DAG models play a fundamental role in modern data science and artificial intelligence through their applications in probabilistic inference \cite{koller2009probabilistic} and causality \cite{pearl2009causality}. 
%In such fields, the combinatorics of the graph can be used to determine complexity bounds for probabilistic inference algorithms \cite[Chapter 9]{koller2009probabilistic} and estimators for the causal effect of one variable in the system on another \cite[Chapter 3]{pearl2009causality}. 
%Given these applications, it is valuable to have a DAG representation of a data-generating distribution, especially if such a representation can be learned directly from data.
%This is the basic problem of \emph{causal discovery}, for which a variety of methods based on the combinatorics of DAGs have been proposed \cite{chickering2002optimal, kuipers2022efficient, LRS2022a, solus2017consistency, spirtes1991algorithm, tsamardinos2006max}. 

%More recently, efforts were made to rephrase the problem of causal discovery as a linear-programming problem by embedding DAGs as 0/1-vectors in real-Euclidean space \cite{studeny2010characteristic}: 

As previously discussed, many classic methods for causal discovery are based on the combinatorics of DAGs but recent work has shown that it can instead be seen as a linear program by embedding DAGs as 0/1-vectors in real-Euclidean space in the following way. 
\begin{definition}
Given a DAG $\cg = ([n], E)$, the \emph{characteristic imset} of $\cg$ is
\begin{equation*}
    \begin{split}
        c_\cg &: \{S\subseteq [n]: |S|\geq 2\} \longrightarrow \{0,1\}; \\
        c_\cg &: S \longmapsto
        \begin{cases}
            1   &   \mbox{ if there exists $i\in S$ such that $S\setminus\{i\}\subseteq\pa_\cg(i)$},\\
            0   &   \mbox{ otherwise}.
        \end{cases}
    \end{split}
\end{equation*} 
\end{definition}
Note that for a DAG $\cg$ with $n$ vertices we can realize $c_\cg$ as a 0/1-vector in $\rr^{2^n - n - 1}$. We then define the \emph{characteristic imset polytope} to be $\CIM_n = \conv(c_\cg : \cg \mbox{ a DAG with nodes $[n]$})$. 

An important feature of modeling with DAGs is that two different DAGs can encode the same set of densities: we say that $\cg$ and $\ch$ are \emph{Markov equivalent} if $\cm(\cg) = \cm(\ch)$.  
The phenomenon of Markov equivalence plays a key role in the problem of causal discovery when one wishes to learn a DAG representation based on observational data alone since, in this case, one can only hope to learn the DAG up to Markov equivalence. 
Hence, a variety of characterizations of Markov equivalence have been given, which include the following:
%---Theorem: Markov Equivalence---
\begin{theorem}
    \label{thm: markov}
    The following are equivalent:
    \begin{enumerate}
        \item $\cg = ([n], E)$ and $\ch = ([n], E^\prime)$ are Markov equivalent, 
        \item \cite{studeny2010characteristic} $c_\cg = c_\ch$, and
        \item \cite{verma2022equivalence} $\cg$ and $\ch$ have the same skeleton and v-structures.
    \end{enumerate}
\end{theorem}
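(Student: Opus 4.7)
The plan is to establish the three-way equivalence by proving (1) $\iff$ (3) and (2) $\iff$ (3), since each of these has a fairly distinct flavor: the first is a purely probabilistic/graph-theoretic statement about d-separation, while the second is combinatorial, relying directly on the definition of $c_\cg$.

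For (1) $\iff$ (3), I would use the standard d-separation machinery. Recall that two DAGs encode the same CI statements iff they encode the same d-separations. For the ($\Leftarrow$) direction, given two DAGs with the same skeleton and v-structures, one argues path-by-path that a set $C$ blocks a path $\pi$ in $\cg$ iff it blocks the corresponding path in $\ch$: the only way two DAGs with the same skeleton can differ in d-separations is by differing at a collider, which is determined precisely by the v-structure data. For the ($\Rightarrow$) direction, adjacency of $i,j$ can be detected by the fact that no set $C$ d-separates them (so the skeleton is CI-determined), and a v-structure $i \to k \leftarrow j$ with $i,j$ non-adjacent is distinguished from a non-collider triple by noting that $k$ must \emph{not} be in any separating set for $i,j$.

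For (2) $\iff$ (3), the direction (2) $\Rightarrow$ (3) follows directly from inspecting small subsets. For $|S|=2$ we have $c_\cg(\{i,j\}) = 1$ iff $i$ and $j$ are adjacent, so equality of characteristic imsets recovers the skeleton. For $|S|=3$ with $c_\cg(\{i,j\}) = 0$, the only way $c_\cg(\{i,j,k\})$ can be $1$ is if $\{i,j\} \subseteq \pa_\cg(k)$ (the alternative cases $\{j,k\} \subseteq \pa_\cg(i)$ or $\{i,k\} \subseteq \pa_\cg(j)$ would force adjacency of $i$ and $j$), which is exactly the v-structure condition. Thus size-$2$ and size-$3$ entries already encode the skeleton and v-structures. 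For (3) $\Rightarrow$ (2) I would invoke Chickering's theorem that two Markov equivalent DAGs are connected by a sequence of covered edge reversals, and then verify that each such reversal preserves $c_\cg(S)$ for every $S$.

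The main obstacle is this last verification: a covered edge reversal swaps $i \to j$ to $j \to i$ under the condition $\pa_\cg(j) = \pa_\cg(i) \cup \{i\}$, and for a subset $S$ containing both $i$ and $j$ one must check that the existence of some $\ell \in S$ with $S \setminus \{\ell\} \subseteq \pa_\cg(\ell)$ is preserved. This requires a careful case analysis distinguishing whether the candidate $\ell$ is $i$, $j$, or some other node, and using the covered edge hypothesis to shuttle the parent-of-$\ell$ conditions between the two graphs. An alternative, perhaps cleaner, route is to argue via the CPDAG (essential graph) of the equivalence class, showing directly from its combinatorics that $c_\cg(S) = 1$ iff some $i \in S$ has all of $S \setminus \{i\}$ either oriented into $i$ or connected to $i$ by an undirected edge whose orientation is forced inward by $S$; then (3) implies equal CPDAGs implies equal imsets.
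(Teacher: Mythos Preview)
The paper does not supply its own proof of this theorem; it is quoted as background with citations to Studen\'y (for the imset characterization) and Verma--Pearl (for the skeleton/v-structure characterization). So there is nothing in the paper to compare your argument against, and your proposal is not redundant with anything in the text.

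Your outline is correct. The equivalence (1) $\iff$ (3) is the classical Verma--Pearl result and your d-separation sketch is the standard one. For (2) $\Rightarrow$ (3) your reading of the size-2 and size-3 coordinates is exactly right. For (3) $\Rightarrow$ (2), the covered-edge-reversal route does go through and the case analysis you flag as ``the main obstacle'' is short: if the witnessing vertex $\ell$ for $c_\cg(S)=1$ is neither $i$ nor $j$ its parent set is unchanged; if $\ell=j$ and $i\in S$ then $S\setminus\{i\}\subseteq\pa_\cg(i)\cup\{j\}=\pa_\ch(i)$, so $i$ witnesses in $\ch$; if $\ell=j$ and $i\notin S$ then $S\setminus\{j\}\subseteq\pa_\cg(i)=\pa_\ch(j)$; and if $\ell=i$ then necessarily $j\notin S$ and $\pa_\cg(i)\subseteq\pa_\ch(i)$. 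Symmetry of the reversal gives the converse. Your alternative CPDAG route also works but is no cleaner; the direct check is only a few lines once written out. One small logical point: invoking Chickering to get the chain of covered reversals already presupposes (1) $\iff$ (3), so be sure to state that equivalence first (as you do) before using it inside the proof of (3) $\Rightarrow$ (2).
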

Here, the \emph{skeleton} of a DAG $\cg = ([n], E)$ refers to its underlying undirected graph, and a \emph{v-structure} refers to a triple of vertices $i,j,k$ for which $i\rightarrow j, k\rightarrow j\in E$ but $i$ and $k$ not adjacent in $\cg$. 
In the following, we will refer to the graph $P_\cg$ given by undirecting all edges in $\cg$ that are not edges of a v-structure as the \emph{pattern} of $\cg$. 
An alternative graphical representation of the Markov equivalence class of $\cg$ is its so-called \emph{essential graph}, denoted $E_\cg$, which is a graph with both directed and undirected edges and the same skeleton as $\cg$ in which the directed edges are exactly those edges that are oriented in the same direction in all members of the Markov equivalence class of $\cg$.
Hence, Theorem~\ref{thm: markov} states that two DAGs are Markov equivalent if and only if they have the same characteristic imsets and the same pattern (or essential graph). See Figure~\ref{fig:patternsAndEssentialGraphs} for an explanation of the distinction between patterns and essential graphs. Given an undirected graph $G = ([n], E)$ we let $\meq(G)$ denote the set of patterns of all DAGs that have skeleton $G$. 

\begin{figure}
    \centering
\begin{tikzpicture}

%LEFT DAG
\begin{scope}[shift = {(-3,0)}]
\node at (-0.7,0.866) {1};
\node at (-0.7,-0.866) {2};
\node at (-0.3,0) {3};
\node at (1.3,0) {4};
\node at (1.7,0.866) {5};
\node at (1.7,-0.866) {6};

\node at (0.5,-1) {$\cg$};

\draw [-Stealth] (0,0)--(1,0);
\draw [rotate = 120, -Stealth] (1,0)--(0,0);
\draw [rotate = 240, Stealth-] (1,0)--(0,0);
\draw [shift = {(1,0)}, rotate = 60, -Stealth] (1,0)--(0,0);
\draw [shift = {(1,0)}, rotate = -60, Stealth-] (1,0)--(0,0);
%\draw [shift = {(1/2,0)}, -Stealth] (1,0.85) -- (1,-0.8);
\end{scope}

%CENTER PATTERN
\begin{scope}
\node at (0.5,-1) {$P_\cg$};

\draw [-Stealth] (0,0)--(1,0);
\draw [rotate = 120] (1,0)--(0,0);
\draw [rotate = 240] (1,0)--(0,0);
\draw [shift = {(1,0)}, rotate = 60, -Stealth] (1,0)--(0,0);
\draw [shift = {(1,0)}, rotate = -60] (1,0)--(0,0);
%\draw [shift = {(1/2,0)}, -Stealth] (1,0.85) -- (1,-0.85);
\end{scope}

%RIGHT ESSENTIAL GRAPH
\begin{scope}[shift = {(3,0)}]
\node at (0.5,-1) {$E_\cg$};

\draw [-Stealth] (0,0)--(1,0);
\draw [rotate = 120] (1,0)--(0,0);
\draw [rotate = 240] (1,0)--(0,0);
\draw [shift = {(1,0)}, rotate = 60, -Stealth] (1,0)--(0,0);
\draw [shift = {(1,0)}, rotate = -60,Stealth-] (1,0)--(0,0);
%\draw [shift = {(1/2,0)}, -Stealth] (1,0.85) -- (1,-0.85);
\end{scope}
\end{tikzpicture}

% \begin{tikzpicture}

% %A DAG
% \begin{scope}[shift = {(-3,0)}]
% \node at (-0.7,0.866) {1};
% \node at (-0.7,-0.866) {2};
% \node at (-0.3,0) {3};
% \node at (1.3,0) {4};
% \node at (1.7,0.866) {5};
% \node at (1.7,-0.866) {6};

% \node at (0.5,-1) {$\cg$};

% \draw [-Stealth] (0,0)--(1,0);
% \draw [rotate = 120, -Stealth] (1,0)--(0,0);
% \draw [rotate = 240, -Stealth] (1,0)--(0,0);
% \draw [shift = {(1,0)}, rotate = 60, Stealth-] (1,0)--(0,0);
% \draw [shift = {(1,0)}, rotate = -60, Stealth-] (1,0)--(0,0);
% \draw [shift = {(1/2,0)}, -Stealth] (1,0.85) -- (1,-0.8);
% \end{scope}

% %LEFT PATTERN
% \begin{scope}
% \node at (0.5,-1) {$P_\cg$};

% \draw (0,0)--(1,0);
% \draw [rotate = 120, -Stealth] (1,0)--(0,0);
% \draw [rotate = 240, -Stealth] (1,0)--(0,0);
% \draw [shift = {(1,0)}, rotate = 60] (1,0)--(0,0);
% \draw [shift = {(1,0)}, rotate = -60] (1,0)--(0,0);
% \draw [shift = {(1/2,0)}, -Stealth] (1,0.85) -- (1,-0.85);
% \end{scope}

% %RIGHT ESSENTIAL GRAPH
% \begin{scope}[shift = {(3,0)}]
% \node at (0.5,-1) {$E_\cg$};

% \draw [-Stealth] (0,0)--(1,0);
% \draw [rotate = 120, -Stealth] (1,0)--(0,0);
% \draw [rotate = 240, -Stealth] (1,0)--(0,0);
% \draw [shift = {(1,0)}, rotate = 60, Stealth-] (1,0)--(0,0);
% \draw [shift = {(1,0)}, rotate = -60, Stealth-] (1,0)--(0,0);
% %\draw plot [shift = {(1/2,0)}, smooth, tension=2, -Stealth] coordinates { (1,0.85) (1.2,0) (1,-0.85)};
% \draw [shift = {(1/2,0)}, -Stealth] (1,0.85) -- (1,-0.85);
% \end{scope}
% \end{tikzpicture}
    \caption{A DAG $\cg$ and the associated pattern $P_\cg$ and essential graph $E_\cg$. The edge $4 \rightarrow 6$ is part of no v-structures, so it is undirected in the pattern. Since $3 \rightarrow 4 \leftarrow 5$ forms a v-structure, all DAGs in the Markov equivalence class of this DAG must have the same orientation $4 \rightarrow 6$, which appears in the essential graph.}
    %\caption{A DAG $\cg$ and the associated pattern $P_\cg$ and essential graph $E_\cg$. The edge $3 \rightarrow 4$ is part of no v-structures, so it is undirected in the pattern. Since $1 \rightarrow 3 \leftarrow 2$ forms a v-structure, all DAGs in the Markov equivalence class of this DAG must have the same orientation $3 \rightarrow 4$, which appears in the essential graph.}
    \label{fig:patternsAndEssentialGraphs}
\end{figure}
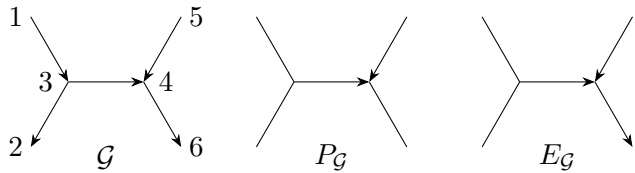

Due to its relevance in the problem of causal discovery, the polyhedral geometry of $\CIM_n$ has been studied.  
In \cite{studeny2017towards, studeny2021dual, xi2015characteristic} families of facets of $\CIM_n$ and certain subpolytopes of $\CIM_n$ are identified. Similar results have also been found for families of edges in \cite{LRS2022a, LRS2022b}. 
In \cite[Proposition 2.5]{LRS2022a}, it is noted that for an undirected graph $G = ([n], E)$
\[
\CIM_G = \conv(c_\cg : \cg \mbox{ has skeleton $G$})
\]
is a face of $\CIM_n$.  
In \cite{LRS2022b}, a complete characterization of the edges of $\CIM_G$ for $G$ a tree is also given. 
The following useful fact was also observed:
\begin{lemma}
    \cite[Proposition 1.9]{LRS2022b}\label{lem: CIMP of a star graph}
    If $G = ([n], E)$ is the \emph{star graph}; i.e., $E = \{j-i^* : j\in [n]\setminus\{i^*\}\}$ for some $i^*\in[n]$, then $\CIM_G$ is a simplex. 
\end{lemma}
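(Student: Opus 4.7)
The plan is to enumerate the vertices of $\CIM_G$ explicitly and then verify affine independence via a triangularity argument on a subset-containment matrix.

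First, I would classify the Markov equivalence classes of DAGs with skeleton $G$. Since $G$ is a tree, every orientation of its edges is acyclic, so a DAG $\cg$ with skeleton $G$ is determined by the set $T = T(\cg) := \{j \in [n] \setminus \{i^*\} : j \to i^* \in E(\cg)\}$. Each leaf has degree one in $G$, so v-structures of $\cg$ can only occur at $i^*$, and they are precisely the unordered pairs of elements of $T$ (such leaves are never adjacent in $G$). By Theorem~\ref{thm: markov}(3), two such DAGs are Markov equivalent iff they determine the same $T$ when $|T| \geq 2$, while all orientations with $|T| \leq 1$ share the empty v-structure set and thus lie in a single equivalence class. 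Hence the vertices of $\CIM_G$ are indexed by $\mathcal{T} := \{\emptyset\} \cup \{T \subseteq [n] \setminus \{i^*\} : |T| \geq 2\}$.

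Next, I would compute $c_T := c_\cg$ for a representative $\cg$ of the class $T \in \mathcal{T}$. A direct case analysis on $S \subseteq [n]$ with $|S| \geq 2$, using $\pa_\cg(j) \subseteq \{i^*\}$ for every leaf $j$, yields: $c_T(S) = 0$ whenever $i^* \notin S$; $c_T(\{i^*, j\}) = 1$ for every leaf $j$ regardless of the orientation of the edge $j - i^*$ (take $i = i^*$ or $i = j$ in the definition); and for $|S| \geq 3$ with $i^* \in S$, only $i = i^*$ can possibly witness the defining condition, so $c_T(S) = 1$ iff $S \setminus \{i^*\} \subseteq T$. In particular, the coordinates of $c_T$ indexed by $S$ with $i^* \notin S$ or $|S| = 2$ do not depend on $T$.

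Finally, to prove affine independence, I would subtract $c_\emptyset$ from each $c_T$ and restrict to the coordinates $S$ with $i^* \in S$ and $|S| \geq 3$, reindexing them by $U := S \setminus \{i^*\}$. The resulting vectors are $v_T(U) := \mathbbm{1}[U \subseteq T]$ for $T \in \mathcal{T}$, with $v_\emptyset \equiv 0$, so it suffices to show that $\{v_T : |T| \geq 2\}$ is linearly independent. The square matrix $M_{T,U} := \mathbbm{1}[U \subseteq T]$, with both axes indexed by the size-$\geq 2$ subsets of $[n] \setminus \{i^*\}$ ordered by increasing cardinality, is lower triangular with $1$'s on the diagonal: $U \subseteq T$ forces $|U| \leq |T|$, and equality of sizes forces $U = T$. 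Hence $M$ is invertible, the $c_T$ are affinely independent, and $\CIM_G$ is a simplex. The only substantive obstacle is the bookkeeping around the degenerate classes $|T| \leq 1$ and confirming that $c_T$ is well-defined on each Markov equivalence class (immediate from Theorem~\ref{thm: markov}(2)); the essential mathematical content is just the triangularity of the subset-containment matrix.
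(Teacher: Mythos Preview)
Your proof is correct. The paper does not supply its own proof of this lemma; it is imported as \cite[Proposition 1.9]{LRS2022b}, so there is no in-paper argument to compare against. Your enumeration of the Markov equivalence classes as $\{\emptyset\}\cup\{T\subseteq[n]\setminus\{i^*\}:|T|\ge 2\}$ is right, the computation of $c_T$ is accurate, and the triangularity of the subset-containment matrix cleanly gives affine independence.
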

It is also not difficult to see that $\CIM_G$ is a simplex when $G$ is the path graph on four vertices. 
We will use these observations in the coming sections. 

\subsection{Gr\"obner bases for Toric Ideals from Bipartite Graphs and Quasi-Independence}
In this section we provide some background on toric ideals. We also describe those that arise from quasi-independence models in statistics or equivalently toric ideals arising from bipartite graphs. In the following sections, we will use quasi-independence ideals to study the toric ideals associated to characteristic imset polytopes. 

Quasi-independence models are log-linear models for the joint distribution of two discrete random variables $X$ and $Y$, with respective state spaces $[r]$ and $[s]$, where some subset of the states of $X$ and $Y$ are forbidden from occurring together. These forbidden states of the joint distribution are called \emph{structural zeroes} of the model. These models are typically specified by a set $Q \subseteq [r] \times [s]$ which consists of the allowed states of the joint distribution. The structural zeros of the model are then all $(j, k) \notin Q$.

% \textcolor{joegreen}{J: Maybe we should change the letter for our set here. We have used $S$ to mean literally every kind of set in this paper. Let's vote!
% \begin{enumerate}
% \item $S$-homogeneous
% \item $Q$-homogeneous
% \item \u{G}-homogeneous (appropriating this cool $G$ from Turkish, don't know how to pronounce it)
% \item Some other letter (be specific)
% \end{enumerate}
% }

% \textcolor{joegreen}{J: I vote for \#2}
% \textcolor{blue}{B: I also vote for 2}
% \textcolor{orange}{IP: Although it would have been lovely to hear people trying to pronounce \#3, I need to hold back and say \#2.}
% \textcolor{red}{L: 2 it is!  Although my real vote is for the one no one can pronounce... technically thats a tie.  Just saying. :D}

\begin{definition}
\label{defn:quasiIndep}
Let $r$ and $s$ be two positive integers and $Q \subseteq [r] \times [s]$. The \emph{quasi-independence ideal} associated to the set $Q$ is the kernel of the monomial map
\begin{align*}
    \phi_Q: \kk[z_{jk} ~|~ (j, k) \in Q]& \to \kk[x_j, y_k ~|~ j \in [r] , k \in [s]]\\
            z_{jk}& \mapsto x_j y_k
\end{align*}
and is denoted by $I_Q = \ker(\phi_Q)$. 
\end{definition}
Note that in the definition of the quasi-independence model we have simply omitted coordinates which correspond to structural zeroes. Also observe that $I_Q$ is a toric ideal since it is the kernel of a monomial map. Moreover, Theorem~\ref{thm:gbofquasi-independence} gives an explicit generating set for $I_Q$.

We frequently associate an $r \times s$ matrix $A_Q = \left(a_{jk}\right)_{(j,k) \in [r] \times [s]}$ to the set $Q$ given by
\[a_{jk} = \begin{cases} z_{jk} &\mbox{if}~ (j,k) \in Q \\ 0 &\mbox{otherwise} \end{cases}\]
Such a set $Q$ also has a naturally associated bipartite graph. We define $G_Q \subseteq K_{r,s}$ to be the bipartite graph on two disjoint and independent vertex sets $[r]$ and $[s]$ with edge set $Q$. The structural zeros encode the absence of an edge in $G_Q$. This graph is usually non-planar, and so in examples and figures we will typically consider $A_Q$. However, $G_Q$ establishes a link to a different viewpoint. The quasi-independence ideals are also studied under the title of toric ideals of edge rings \cite{villarreal} and many combinatorial and algebraic aspects of these ideals are known \cite{Herzog2018}. Additionally the so-called edge polytopes, whose vertices are the columns of the incidence matrix of $G_Q$, are introduced in \cite{de1995grobner} and their faces are characterized in terms of certain subgraphs of $G_Q$ \cite[Theorem 2.18]{por21}. 
%\textcolor{red}{In particular, $\CIM_G$ is an edge polytope of a tree, when $G$ is the star graph.}
\begin{theorem}
\label{thm:gbofquasi-independence}
Let $Q \subseteq [r] \times [s]$ and let $j_1 - k_1, j_2 - k_1 , j_2 - k_2,\dots, j_\ell - j_\ell, j_1 - k_\ell $ be the edges of an induced cycle of $G_Q$. Then
\[\prod\limits_{i=1}^\ell z_{j_i,k_i} - \prod\limits_{i=1}^\ell z_{j_{i+1},k_i}\]
(where $i$ is taken modulo $\ell$) is a binomial of $I_Q$. Moreover, the set of all such binomials forms the universal Gr\"obner basis for $I_Q$.
\end{theorem}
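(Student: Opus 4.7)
The plan is to split the theorem into two claims: each cycle binomial lies in $I_Q$, and the collection of all of them forms a universal Gr\"obner basis. For the containment, set $f := \prod_{i=1}^\ell z_{j_i,k_i} - \prod_{i=1}^\ell z_{j_{i+1},k_i}$ and apply $\phi_Q$: the first monomial maps to $\prod_{i=1}^{\ell} x_{j_i} y_{k_i}$, while in the second, as $i$ ranges modulo $\ell$, the index $j_{i+1}$ runs through the same multiset $\{j_1,\dots,j_\ell\}$, so its image is also $\prod_{i=1}^{\ell} x_{j_i} y_{k_i}$. Thus $\phi_Q(f) = 0$ and $f \in I_Q$.

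For the universal Gr\"obner basis claim, I would invoke the standard fact \cite{sturmfels1996Grobner} that for any toric ideal, the circuits are contained in the universal Gr\"obner basis, which is in turn contained in the Graver basis of primitive binomials. It therefore suffices to identify the induced-cycle binomials with the primitive binomials of $I_Q$, since then all three sets coincide. Given any $g = u - v \in I_Q$ with $\gcd(u, v) = 1$, the edge multisets of $G_Q$ recorded by $u$ and $v$ determine an even closed walk alternating between them, because $\phi_Q(u) = \phi_Q(v)$ imposes equal vertex multiplicities on each side of the bipartition. Primitivity forces this closed walk to visit each vertex at most once and to be chordless; otherwise one could partition the walk into two strictly shorter closed sub-walks, yielding a binomial $u' - v' \in I_Q$ with $u' \mid u$ and $v' \mid v$, contradicting primitivity. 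Hence the walk is an induced cycle, and conversely any induced cycle yields a primitive binomial by the same reasoning.

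The main obstacle is making the splitting argument fully rigorous in the bipartite setting, which underpins the classical fact that primitive even closed walks in bipartite graphs are exactly the induced cycles. This is well-established in the literature on edge rings; I would cite \cite{villarreal} and \cite{Herzog2018} for the identification of primitive binomials with induced cycles, together with \cite{sturmfels1996Grobner} for the containments among circuits, universal Gr\"obner bases, and Graver bases.
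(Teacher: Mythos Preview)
Your argument is correct in outline but takes a genuinely different route from the paper. The paper gives a self-contained, elementary proof via Buchberger's criterion: for any two cycle binomials $f,g$ and any term order, it builds a two-colored bipartite multigraph from the monomials of $S(f,g)$, walks it alternating colors to locate a cycle, and uses the corresponding cycle binomial to reduce. Because the reduction step never depends on which term is leading (for binomials one can reduce a trailing term as well), the division terminates at $0$ for every term order, establishing the universal Gr\"obner basis directly. Your approach instead invokes the sandwich $\text{Circuits}\subseteq\text{UGB}\subseteq\text{Graver}$ from \cite{sturmfels1996Grobner} and then collapses it by identifying both ends with the induced-cycle binomials, deferring the combinatorial work to the edge-ring literature \cite{villarreal,Herzog2018}. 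This is more conceptual and situates the result in the general toric framework, at the cost of importing several nontrivial facts; the paper's proof is longer to write out but requires nothing beyond the definition of an $S$-polynomial.

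One small gap to close: from $\text{Graver}=\{\text{induced-cycle binomials}\}$ and the sandwich you only get $\text{UGB}\subseteq\{\text{induced cycles}\}$. To force equality you also need $\{\text{induced cycles}\}\subseteq\text{Circuits}$, i.e.\ that every induced-cycle binomial has support minimal among nonzero kernel elements. This follows by the same splitting argument (a kernel element with strictly smaller support would itself be an induced-cycle binomial whose edge set is properly contained in the original, contradicting chordlessness), but you should state it explicitly rather than asserting that ``all three sets coincide'' once the Graver identification is made.
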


A proof of Theorem \ref{thm:gbofquasi-independence} for general directed graphs is given in \cite[Proposition 4.3]{GITLER2010430}. Assuming that the edges of $G_Q$ are directed from the vertices in $[r]$ to the vertices in $[s]$, we may apply this proof. For completeness we give an elementary proof by computing the normal form of all $S$-polynomials.

\begin{proof}
The multisets of $j_i$ and $k_i$ in the two monomials are the same, and so every binomial of this form lies in $I_Q$. Let $f$ and $g$ be binomials associated to cycles. We form the $S$-polynomial $S(f,g)$ by multiplying $f$ and $g$ by monomials, so that their leading terms are the same, and subtracting to cancel the leading terms. After scaling $f$ and $g$, all four monomials have the same multisets of $j_i$ and $k_i$, and so this holds for $S(f,g)$ as well.

We form a bipartite graph $G'$ with vertex sets $[r]$ and $[s]$ and edges $j - k$ for $z_{jk}$ appearing in $S(f,g)$ (with multiple edges allowed between two vertices). Color the edges by which monomial they appear in. Traverse the edges of the graph, alternating between the two edge colors. Because both monomials contain the same multiset of $j_i$ and $k_i$, each vertex is part of the same number of each color of edge. Consequently each vertex entered has an edge of the opposite color that one can exit through, so we can traverse the edges of the graph until a cycle is found. The leading term of the associated cycle binomial divides some (not necessarily leading) term of $S(f,g)$. 
Note that such a cycle binomial always exists since every time you enter a vertex on an edge of one color there is always an edge of the other color that you can exit along.  
Hence, you can traverse the graph in this way until you return to a previously visited vertex, at which point you must have discovered such a cycle binomial.
% {\color{purple} why must there always exist a cycle in G'?}\textcolor{joegreen}{Because whenever you enter a vertex via an edge of one color, there is an edge of the other to exit through. You can keep traversing edges until you reach a vertex that was previously visited.}

With binomial ideals, we can perform the division algorithm even if the leading term of the cycle binomial divides the trailing term. This is because the division algorithm replaces one monomial with another when we divide by a binomial, so the number of terms never increases. Via the previous argument, at each step in the division algorithm, one can obtain a cycle in $G'$ and use it to decrease some monomial in the term order. 
This process can only terminate at the normal form 0.
\end{proof}

\begin{example}
Let $r = s = 5$ and consider $Q = \left([1,2] \times [1,4]\right) \cup \left( [2,5] \times [4,5]\right)$. The associated matrix $A_Q$ is
\[\begin{bmatrix} z_{11} & z_{12} & z_{13} & z_{14} & 0 \\
z_{21} & z_{22} & z_{23} & z_{24} & z_{25} \\
0 & 0 & 0 & z_{34} & z_{35} \\
0 & 0 & 0 & z_{44} & z_{45} \\
0 & 0 & 0 & z_{54} & z_{55}
\end{bmatrix}.\]
The graph $G_Q$ has 12 cycles of length 4, which correspond to the $2 \times 2$ submatrices with no structural zeroes. There are no larger cycles in $G_Q$, so $I_Q$ is generated by 12 quadratic binomials.% A generating set for the kernel of $\phi_Q$ is
%\[\begin{array}{ccc}
%     z_{11}z_{22} - z_{12}z_{21}, &z_{11}z_{23} - z_{13}z_{21}, &z_{11}z_{24} - z_{14}z_{21}, \\
%     z_{11}z_{22} - z_{12}z_{21}, &z_{11}z_{23} - z_{13}z_{21}, &z_{11}z_{24} - z_{14}z_{21},\\
%     z_{42}z_{53} - z_{52}z_{43},& z_{42}z_{54} - z_{44}z_{52}, &z_{42}z_{55} - z_{45}z_{52} \\
%     z_{43}z_{54} - z_{44}z_{53}, &z_{43}z_{55} - z_{45}z_{53}, &z_{44}z_{55} - z_{45}z_{54}
%\end{array}\]
\end{example}

Next, we state when the quasi-independence ideal is generated by quadratic square-free binomials. A bipartite graph $G$ is called \emph{chordal bipartite} if it contains no induced cycles with at least 6 vertices. The graph $G_Q$ above is an example of such a graph. We have the following special case of \cite[Theorem 1.2]{OHSUGI1999509}.
\begin{theorem}\label{thm: quadratic and square-free condition for quasi-independence ideals}
Let $Q \subseteq [r] \times [s]$. Then $I_Q$ is generated by quadratic square-free binomials if and only if $G_Q$ is chordal bipartite.
\end{theorem}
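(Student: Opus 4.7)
The plan is to prove both implications using Theorem~\ref{thm:gbofquasi-independence}, which identifies the universal Gr\"obner basis of $I_Q$ with the set of induced cycle binomials of $G_Q$, together with the natural $\zz^{r+s}$-multigrading on $\kk[z_{jk}~|~(j,k) \in Q]$ given by $\deg(z_{jk}) = x_j y_k$, under which $I_Q$ is homogeneous. For the ``if'' direction, assume $G_Q$ is chordal bipartite. Then every induced cycle of $G_Q$ has length four, so by Theorem~\ref{thm:gbofquasi-independence} the universal Gr\"obner basis of $I_Q$ consists of the 4-cycle binomials $z_{jk}z_{j'k'} - z_{jk'}z_{j'k}$, all of which are quadratic and square-free, and these in particular generate $I_Q$.

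For the ``only if'' direction I would argue by contrapositive. Suppose $G_Q$ contains an induced cycle $C$ of length $2\ell \geq 6$ with vertex sequence $j_1, k_1, j_2, k_2, \ldots, j_\ell, k_\ell$, and let $f_C = \prod_{i=1}^\ell z_{j_i,k_i} - \prod_{i=1}^\ell z_{j_{i+1}, k_i} \in I_Q$ be the associated cycle binomial. A short multidegree calculation first confirms that every nonzero quadratic binomial in $I_Q$ has the form $z_{jk}z_{j'k'} - z_{jk'}z_{j'k}$ arising from a (not necessarily induced) 4-cycle in $G_Q$. Now assume for contradiction that $f_C = \sum_i c_i m_i g_i$ with each $g_i$ such a 4-cycle binomial, each $m_i$ a monomial, and $c_i \in \kk$, and extract the component of this expression in multidegree $\mathbf{d} := \prod_i x_{j_i} y_{k_i} = \deg(f_C)$. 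For $m_i g_i$ to contribute to this component, every variable appearing in $m_i g_i$ must correspond to a vertex of $C$; in particular the four vertices of the 4-cycle defining $g_i$ all lie on $C$, and because $C$ is induced, the four corresponding edges also lie on $C$.

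The crux of the argument, which I expect to be the main obstacle, is the combinatorial observation that for $\ell \geq 3$ no 4-cycle can be contained in $C$: such a 4-cycle would require two distinct vertices on one side of the bipartition to share two common neighbors on $C$, yet in a bipartite cycle of length at least six any two distinct vertices on the same side share at most one common neighbor on the cycle. Consequently no $m_i g_i$ has multidegree $\mathbf{d}$, so the component of $\sum_i c_i m_i g_i$ in multidegree $\mathbf{d}$ vanishes, contradicting $f_C \neq 0$. This shows that $f_C$ is not in the ideal generated by the quadratic binomials of $I_Q$, completing the contrapositive and hence the proof.
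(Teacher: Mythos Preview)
Your proof is correct. Note, however, that the paper does not actually supply its own proof of this statement: it is quoted as a special case of \cite[Theorem 1.2]{OHSUGI1999509} and is used as a black box. So there is no ``paper's proof'' to compare against; what you have written is a self-contained argument that the paper does not attempt.

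Your approach is natural given the tools available in the paper: the ``if'' direction is immediate from Theorem~\ref{thm:gbofquasi-independence}, and for the ``only if'' direction your multidegree localization is the right idea. The key steps---(i) every nonzero quadratic binomial of $I_Q$ is a $2\times 2$ minor coming from a $4$-cycle in $G_Q$, (ii) restricting to the multidegree of $f_C$ forces the vertices of any contributing $4$-cycle to lie on $C$, (iii) inducedness of $C$ then forces its edges to lie on $C$, and (iv) a bipartite cycle of length $\geq 6$ contains no $4$-cycle---are all sound. One small point worth making explicit in (i): you are implicitly using that a toric ideal generated in degree two is generated by its degree-two \emph{binomials}; this is standard, but since the theorem statement says ``quadratic square-free binomials'' rather than ``quadratics,'' it costs nothing to note that every degree-two binomial in $I_Q$ is automatically square-free and of $2\times 2$-minor shape, so the hypothesis already hands you a generating set of $4$-cycle binomials.
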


In Section~\ref{sec:QIG} we define a notion of iterated gluing by factoring through quasi-independence maps. Factoring through maps associated to chordal bipartite graphs preserves the property that the ideal is generated by square-free quadratics.

% \begin{theorem}[{\cite[Theorem 1.2]{OHSUGI1999509}}]\label{thm: quadratic and square-free condition for quasi-independence ideals}
% Let $C$ be an even cycle of length $\geq 6$ on the bipartite graph $G_Q$. The quasi-independence ideal is generated by quadratic square-free binomials if and only if $C$ has an
% even-chord or $C$ has three odd-chords $e$, $e'$, $e''$ such that $e$ and $e'$ cross in $C$.
% \end{theorem}
% In particular, for a chordal bipartite graph $G_Q$, its quasi-independence ideal is generated by quadratic square-free binomials. 

\subsection{Toric Fiber Products}
\label{subsec: toric fiber products}
In this section we provide a brief outline of the toric fiber product as the contraction of an ideal under a monomial homomorphism. Unlike the usual description of the toric fiber product, we reframe the theory slightly into the language of quasi-independence maps.

Let $r$ be a positive integer and for each $i \in r$, let $s_i$ and $t_i$ be positive integers. Let $\mathcal{A} = \{a_1,\dots,a_r\} \subset \mathbb{Z}^d$ for some $d$. We define two graded polynomial rings:
\[\kk[x] := \kk[x_j^i~|~i \in [r], j \in [s_i]] \quad \text{and} \quad \kk[y] := \kk[y_k^i~|~i \in [r], k \in [t_i]]\]
where $\deg(x_j^i) = \deg(y_k^i) = a_i$. Throughout this subsection let $I \subseteq \kk[x]$ and $J \subseteq \kk[y]$ be homogeneous ideals with respect to the multigrading. We define a quasi-independence map associated to the multigrading. Let
\[Q(\mathcal{A}) = \bigsqcup_{i=1}^r \left(\{i\} \times [s_i] \right) \times \left( \{i\} \times [t_i]\right)\]
Let $\kk[z] = \kk[z_{jk}^i:\left((i,j),(i,k)\right) \in Q(\mathcal{A})]$. The quasi-independence map $\phi_{Q(\mathcal{A})}:\kk[z] \to \kk[x,y]$ is
\[z_{jk}^i \mapsto x_j^i y_k^i.\]
Morally, we glue $x_j^i$ and $y_k^i$ together when they have the same degree in the multigrading $\ca$. Note that each subset $\left(\{i\} \times [s_i] \right) \times \left( \{i\} \times [t_i]\right) \subseteq Q(\mathcal{A})$ is the edge set of a complete bipartite subgraph of $G_Q$. Up to reordering the rows and columns of $A_Q$, we see that $A_Q$ is block diagonal with blocks indexed by $i \in [r]$. In this subsection we will only consider block diagonal $A_Q$, but in Section~\ref{sec:QIG} we extend the following definitions and results to more general quasi-independence maps.

\begin{definition}
The \emph{toric fiber product} of the ideals $I$ and $J$ is
\[I \times_{\mathcal{A}} J = \phi_{Q(\mathcal{A})}^{-1}(I+J).\]
\end{definition}

It is frequently difficult to compute a Gr\"obner basis of $I \times_\mathcal{A} J$, even with Gr\"obner bases for $I$ and $J$. However, if $\mathcal{A}$ is a set of linearly independent vectors, a Gr\"obner basis can be computed. Let 
\[
f = \sum\limits_{\ell} c_\ell x_{j_{\ell,1}}^{i_{\ell,1}} x_{j_{\ell,2}}^{i_{\ell,2}} \cdots x_{j_{\ell,d}}^{i_{\ell,d}} \in \kk[x].
\]
%\textcolor{red}{L: This notation is comically hard to look at/interpret... can we fix it without destroying the paper? Really what it is supposed to mean is that for each $\ell$ we picked a multiset $I_\ell$ of $i$'s and a multiset $J_\ell$ of $j$'s.  If this was a random sample ${\bf X}_1,\ldots, {\bf X}_n$ from a multivariate distribution ${\bf X} = (X_1,\ldots, X_d)$ I would typically denote the entries in each draw as ${\bf X}_i = (X_{1,i},\ldots, X_{d,i})$ or ${\bf X}_i = (X_{i,1},\ldots, X_{i,d})$... perhaps the latter is more natural in a data science setting where one typically treats the columns of a data frame as the variables and the rows as individual samples.  Does notation such as $x_{j_{\ell,1}}^{i_{\ell,1}} x_{j_{\ell,2}}^{i_{\ell,2}} \cdots x_{j_{\ell,d}}^{i_{\ell,d}}$ bother people?  does it look a little better?}
If $f$ is homogeneous with respect to a linearly independent multigrading, then the multiset of upper indices in each monomial must be the same. 
After reindexing the variables in each monomial, we can suppress $\ell$ from the upper index.
\[f = \sum\limits_{\ell} c_\ell x_{j_{\ell,1}}^{i_1} x_{j_{\ell,2}}^{i_2} \cdots x_{j_{\ell,d}}^{i_d}\]
Let $k = (k_1,\ldots,k_d) \in [t_{i_1}] \times [t_{i_2}] \times \ldots \times [t_{i_d}]$. The \emph{lift} of $f$ by $k$ is 
\[
f_k = \sum\limits_{\ell} c_\ell z_{j_{\ell,1} k_1}^{i_1} z_{j_{\ell,2} k_2}^{i_2} \cdots z_{j_{\ell,d} k_d}^{i_d} \in \kk[z].
\]
The set of all lifts of $f$ is denoted $\text{Lift}(f)$. For a set of polynomials $F \subset I$, we denote the set of all lifts of elements in $F$ by $\text{Lift}(F)$. Similarly we define lifts of elements in $\kk[y]$.

Let $H_{Q(\mathcal{A})}$ denote the universal Gr\"obner basis of the quasi-independence ideal defined in Theorem~\ref{thm:gbofquasi-independence}. Let $B_{Q(\mathcal{A})}$ denote the matrix of the exponent vectors of $\phi_{Q(\mathcal{A})}$. For an ideal $I \subseteq \kk[x]$, we say a finite set $G \subseteq I$ is a \emph{pseudo-Gr\"obner basis} for $I$ with respect to weight $\omega$ if $\langle \init_{\omega}(G) \rangle = \init_{\omega}(I)$. Note that this differs from a Gr\"obner basis in that here $\init_{\omega}(G)$ may not be a set monomials.
Sullivant proved the following theorem.

\begin{theorem}\cite[Theorem 12]{sullivant2007toric}
Let $\mathcal{A}$ be a linearly independent multigrading. Let $F$ and $G$ be Gr\"obner bases for $I$ and $J$ with respect to weight order $\omega_1$ and $\omega_2$ respectively. Then
\[\lift(F) \cup \lift(G) \cup H_{Q(\mathcal{A})}\]
forms a pseudo-Gr\"obner basis of $I \times_{\mathcal{A}} J$ with respect to the weight order $(\omega_1^T,\omega_2^T) B_{Q(\mathcal{A})}$.
\end{theorem}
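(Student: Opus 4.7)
The plan is to follow the classical lifting strategy for toric fiber products: first reduce modulo the quasi-independence relations, then exploit the linear independence of $\mathcal{A}$ to split the result into an $I$-part and a $J$-part, and finally invoke the Gr\"obner bases $F$ and $G$ separately on each part.

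First I would verify the easy inclusion, that every element of $\lift(F) \cup \lift(G) \cup H_{Q(\mathcal{A})}$ lies in $I \times_{\mathcal{A}} J$. This is immediate for $H_{Q(\mathcal{A})}$ since it generates $\ker(\phi_{Q(\mathcal{A})})$, and for a lift $f_k$ of $f \in F \subset I$ one computes $\phi_{Q(\mathcal{A})}(f_k) = f \cdot y_{k_1}^{i_1}\cdots y_{k_d}^{i_d} \in I \subseteq I+J$; the case of $\lift(G)$ is symmetric.

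For the nontrivial inclusion $\init_\omega(I \times_{\mathcal{A}} J) \subseteq \langle \init_\omega(\lift(F)\cup\lift(G)\cup H_{Q(\mathcal{A})})\rangle$, take $f \in I \times_\mathcal{A} J$ with $\omega = (\omega_1^T,\omega_2^T) B_{Q(\mathcal{A})}$. The first move is to reduce $f$ modulo $H_{Q(\mathcal{A})}$. The key observation is that the two monomials of each quasi-independence binomial map to the same image under $\phi_{Q(\mathcal{A})}$, and hence share the same $\omega$-weight, so this reduction is $\omega$-compatible and preserves membership in $I \times_\mathcal{A} J$. Since $I$ and $J$ are $\mathcal{A}$-multihomogeneous, we may further split the result $\tilde f$ into its multihomogeneous components and argue each one separately.

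The core of the argument, and the main obstacle, is to show that a multihomogeneous $\tilde f$ in normal form with respect to $H_{Q(\mathcal{A})}$ decomposes as $\tilde f = \tilde f_I + \tilde f_J$, where $\phi_{Q(\mathcal{A})}(\tilde f_I) = p \cdot m_y$ for some $p \in I$ and monomial $m_y \in \kk[y]$, and analogously for $\tilde f_J$. Linear independence of $\mathcal{A}$ forces every $z$-monomial of $\tilde f$ to carry the same multiset of upper indices $(i_1,\ldots,i_d)$; once these are fixed, the normal form condition removes the fiber-ambiguity that would otherwise obstruct a canonical pairing of $x$- and $y$-factors. The subtlety is that $I+J$ is an ideal in $\kk[x,y]$ rather than a direct sum, so a decomposition $p + q$ of $\phi_{Q(\mathcal{A})}(\tilde f)$ is not a priori unique; one must argue that the normal form of $\tilde f$ can nonetheless be split consistently, using the multidegree to track which monomials arise from $I$ and which from $J$. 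This is where the linear independence of $\mathcal{A}$ enters in an essential way.

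With the decomposition in hand, the finish is routine. Since $F$ is an $\omega_1$-Gr\"obner basis of $I$, $\init_{\omega_1}(p) \in \langle \init_{\omega_1}(F)\rangle$. Lifting along a fixed $y$-index adds a constant $\omega_2$-weight to every monomial in $p$, so $\init_\omega(\tilde f_I)$ is exactly the lift of $\init_{\omega_1}(p)$ and hence lies in $\langle \init_\omega(\lift(F))\rangle$. The case of $\tilde f_J$ is symmetric, and combining with the initial $H_{Q(\mathcal{A})}$-reduction yields the desired containment $\init_\omega(f) \in \langle \init_\omega(\lift(F)\cup\lift(G)\cup H_{Q(\mathcal{A})})\rangle$, completing the pseudo-Gr\"obner basis verification.
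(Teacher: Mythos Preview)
Your route is genuinely different from the paper's (which follows Sullivant's original argument and is reproduced in the proof of Theorem~\ref{thm:quasiIndepPseudoGrobner}). The paper never decomposes an individual element of $I\times_{\mathcal A}J$. Instead it works entirely at the level of initial ideals: from Lemma~\ref{lemma:initIdealContainment} one has
\[
\init_{\omega^T B_{Q}}\bigl(\phi_{Q}^{-1}(I+J)\bigr)\ \subseteq\ \phi_{Q}^{-1}\bigl(\init_{\omega}(I+J)\bigr),
\]
and since $\init_\omega(I+J)$ is a monomial ideal generated by $\init_{\omega_1}(F)\cup\init_{\omega_2}(G)$, Lemmas~\ref{lemma:monomialIdealContraction} and~\ref{lemma:singleMonomialContraction} compute the right-hand side explicitly as $\langle\text{lifts of }\init_{\omega_1}(F)\cup\init_{\omega_2}(G)\rangle + I_Q$. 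One then checks that these monomials are exactly the $\omega^T B_Q$-leading forms of $\lift(F)\cup\lift(G)$, forcing equality throughout. No splitting of any $\tilde f$ is required; the monomial-ideal contraction does the decomposition automatically, generator by generator.

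Your element-wise strategy, by contrast, has a real gap at the step you yourself flag as ``the main obstacle.'' First, reducing $f$ modulo $H_{Q(\mathcal A)}$ under the weight $\omega$ is not well-defined: both monomials of each quasi-independence binomial have the same $\omega$-weight, so you need an unspecified tie-breaker even to speak of a normal form. More seriously, the claimed shape of the decomposition is too strong. Even for multihomogeneous $\tilde f$ in normal form there is no reason $\tilde f_I$ should be a \emph{single} lift of some $p\in I$ by one fixed $y$-monomial $m_y$; in general it will be a $\kk[z]$-linear combination of lifts with different $y$-indices. Your final step, ``lifting along a fixed $y$-index adds a constant $\omega_2$-weight, so $\init_\omega(\tilde f_I)$ is exactly the lift of $\init_{\omega_1}(p)$,'' uses that single-lift form essentially and fails once multiple $y$-profiles are present. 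One can repair this, but doing so amounts to reproving the monomial-contraction lemmas in disguise; the paper's approach is cleaner precisely because it passes to $\init_\omega(I+J)$ first and never faces the non-uniqueness of $p+q$ in $I+J$.
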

After perturbing the term order, this set forms a Gr\"obner basis. In algebraic statistics, the toric fiber product is a commonly used tool for computing a Gr\"obner basis of an ideal with an associated combinatorial structure \cite{ananiadi2021grobner, coons2021toric, cummings2021invariants,engstrom2014multigraded,rauh2016lifting,  sturmfels2005toric, sturmfels2008toric}. Typically the informal gluing of variables in the description of the toric fiber product corresponds to a gluing of combinatorial structures such as trees or simplicial complexes. The toric fiber product allows us to iteratively compute a Gr\"obner basis as we iteratively glue combinatorial structures.

\section{Quasi-Independence Gluings}
\label{sec:QIG}

In this section we introduce a new generalization of the toric fiber product which we call a \emph{quasi-independence gluing} (QIG).
Similar to the toric fiber product, QIGs are the contraction of an ideal under a monomial homomorphism, so many of the techniques used to prove results for the toric fiber product can be naturally extended to prove similar results for QIGs. In particular, we describe the Gr\"obner basis (or generating set) of the QIG of two ideals by lifting the Gr\"obner bases of the original ideals. The main difference between the two operations is that QIGs allow for contraction with respect to the monomial parameterization  of any quasi-independence model whereas toric fiber products can be realized as block-diagonal QIGs. Our notation and terminology for this section is adapted from \cite{aoki2012markov, coons2021quasi, sullivant2007toric}. Throughout this section we fix $r,s \in \mathbb{Z}_{>0}$ and
\[\kk[x] = \kk[x_j:j\in[r]] \quad \text{and} \quad \kk[y] = \kk[y_k: k \in [s]].\]
We begin with the following definition.

\begin{definition}
Let $\omega \in \mathbb{R}^r$ be a weight. Let $Q \subseteq [r] \times [s]$ and $f \subset \kk[x]$ be a polynomial which is homogeneous with respect to total degree. 
Let $\init_\omega(f) = x_{j_1} x_{j_2} \ldots x_{j_d}$.  We say $f$ is \emph{weakly $Q$-homogeneous} with respect to $\omega$ if for every monomial $m' = x_{j_1'} x_{j_2'} \ldots x_{j_d'}$ of $f$ there exists a permutation $\sigma$ of the elements of $[d]$ such that%re-ordering of the $j_\ell'$ such that
%\textcolor{blue}{clarify what the reordering is doing}{\color{orange}I think explaining in Example 3.9 would be helpful.}
\[
\{ 
(k_1, \ldots, k_d) \in [s]^d ~|~ (j_\ell, k_\ell) \in Q, 1 \leq \ell \leq d \}
\subseteq
\{ 
(k_1, \ldots, k_d) \in [s]^d ~|~ (j'_{\sigma(\ell)}, k_\ell) \in Q, 1 \leq \ell \leq d \}. 
\]
\end{definition}

We typically omit the $\sigma$ when describing this reordering since the specific permutation is not important. Later we define and motivate a notion of \emph{strong $Q$-homogeneity}. Note that while weak $Q$-homogeneity does require homogeneity with respect to total degree, it does not necessarily correspond to homogeneity with respect to a multigrading. 

\begin{definition}\label{def: QIG}
Let $Q \subseteq [r] \times [s]$ and $I \subseteq \kk[x]$ and $J \subseteq \kk[y]$ be homogeneous ideals. The \emph{quasi-independence gluing} of $I$ and $J$ with respect to the set $Q$ is
\[
I \times_Q J := \phi_Q^{-1}(I + J).
\]
\end{definition}

Recall that defining the toric fiber product required a set $Q(\mathcal{A})$, defined in terms of a multigrading $\mathcal{A}$, and that $A_{Q(\mathcal{A})}$ is block diagonal. So QIGs include toric fiber products but allow for more general structures.

%The intuitive interpretation of a quasi-independence gluing is that the set $Q$ contains the pairs of variables $(x_j, y_k)$ which are allowed to be glued together to obtain a new variable $z_{jk}$. This is analogous to how variables with the same degree are glued together in a toric fiber product but allows for more general structures. Recall that the toric fiber product is the ideal $I \times_\ca J = \phi_{Q(\mathcal{A})}^{-1}(I+J)$ where
%\begin{align*}
%    \phi_{Q(\mathcal{A})}: \kk[z_{jk}^i]& \to \kk[x_j^i, y_k^i | i \in [r], j \in [s_i], k \in [t_i] ]\\
%            z_{jk}^i & \mapsto x_j^i y_k^i
%\end{align*}

%Observe that $\phi_{Q(\mathcal{A})}$ is exactly the quasi-independence map corresponding to the set $Q = \bigsqcup_{i = 1}^r [s_i] \times [t_i]$. In other words, if one considers the matrix $A_Q$ 
%then toric fiber products are quasi-independence gluings where the matrix $A_Q$ is block-diagonal. So QIGs include toric fiber products but allow for more general gluing structures. 

In this section, we show that when $I$ and $J$ are weakly $Q$-homogeneous, we can construct a Gr\"obner basis for $I \times_Q J$ from Gr\"obner bases of $I$ and $J$. Our approach is essentially the same as that used to determine Gr\"obner bases for toric fiber products. Because of this we try to use the same notation and present our argument in the same format whenever possible. We begin with the following two lemmas from \cite{sullivant2007toric}.

\begin{lemma}[{\cite[Lemma 2.2]{sullivant2007toric}}]
\label{lemma:initIdealContainment}
Let  $\phi: \kk[z_1, \ldots, z_n] \to \kk[t_1, \ldots t_d]$ be a monomial map with a matrix of exponent vectors $B$. If $I$ is an ideal in $\kk[t]$ and $\omega \in \zz_{\geq 0}^d$ is a weight vector on $\kk[t]$, then
\[
\init_{\omega^T B}(\phi^{-1}(I)) \subseteq \phi^{-1}(\init_\omega(I)). 
\]
\end{lemma}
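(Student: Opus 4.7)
The plan is to prove the containment at the level of generators of the ideal on the left. Since $\init_{\omega^T B}(\phi^{-1}(I))$ is generated, as an ideal in $\kk[z]$, by the initial forms $\init_{\omega^T B}(f)$ for $f \in \phi^{-1}(I)$, and since $\phi^{-1}(\init_\omega(I))$ is itself an ideal in $\kk[z]$, it suffices to show that $\init_{\omega^T B}(f) \in \phi^{-1}(\init_\omega(I))$ for every $f \in \phi^{-1}(I)$; equivalently, that $\phi(\init_{\omega^T B}(f)) \in \init_\omega(I)$.

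The computational heart of the argument is to track weights through $\phi$. Let $b_i$ denote the $i$-th column of $B$, so that $\phi(z_i) = t^{b_i}$ and $\phi(z^\alpha) = t^{B\alpha}$ for any exponent vector $\alpha$. Then the $(\omega^T B)$-weight of $z^\alpha$ is $(\omega^T B)\alpha = \omega \cdot (B\alpha)$, which is precisely the $\omega$-weight of the image $\phi(z^\alpha) = t^{B\alpha}$. I would use this identity to conclude that the monomials surviving in $\init_{\omega^T B}(f)$ are exactly those $z^\alpha$ whose images attain the maximum $\omega$-weight among the images of all monomials of $f$.

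Applying $\phi$ term by term, the image $\phi(\init_{\omega^T B}(f))$ is then a $\kk$-linear combination of $t$-monomials all sharing the maximum $\omega$-weight appearing among the terms of $\phi(f)$. After collecting like $t$-monomials, this sum either equals $\init_\omega(\phi(f))$, which lies in $\init_\omega(I)$ because $f \in \phi^{-1}(I)$ means $\phi(f) \in I$, or it equals $0$, which lies trivially in $\init_\omega(I)$. In either case $\phi(\init_{\omega^T B}(f)) \in \init_\omega(I)$, which gives the claim.

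The main subtlety, and in fact the reason the inclusion can be strict rather than an equality, is the cancellation phenomenon just alluded to: distinct exponent vectors $\alpha$ and $\alpha'$ can satisfy $B\alpha = B\alpha'$, so several top-weight contributions from $f$ can collide and partially or wholly cancel when like $t$-terms are collected in $\phi(f)$. I expect this bookkeeping to be the only real obstacle, and I would handle it by working throughout with formal monomials $z^\alpha$ and $t^\beta$, deferring collection of like terms in $t$ until the very last step, so that the dichotomy "$\phi(\init_{\omega^T B}(f))$ is either $\init_\omega(\phi(f))$ or $0$" becomes transparent and no extra hypothesis on $B$ or $\phi$ is required.
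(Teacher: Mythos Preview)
Your argument is correct and is the standard one: the key identity $(\omega^T B)\cdot\alpha = \omega\cdot(B\alpha)$ shows that taking $\omega^T B$-initial forms in $\kk[z]$ and then applying $\phi$ yields either $\init_\omega(\phi(f))$ or $0$, and in either case the result lies in $\init_\omega(I)$. The paper itself does not prove this lemma but simply cites \cite[Lemma~2.2]{sullivant2007toric}; your write-up is exactly the argument given there, so there is nothing to compare.
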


\begin{lemma}[{\cite[Lemma 2.3]{sullivant2007toric}}]
\label{lemma:monomialIdealContraction}
Let $M = \langle m_1, \ldots m_r \rangle \subset \kk[t]$ be a monomial ideal. Then 
\[
\phi^{-1}(M) = \phi^{-1}(\langle m_1 \rangle) + \ldots + \phi^{-1}(\langle m_r \rangle). 
\]
Furthermore, $\phi^{-1}(M) = M' + \ker(\phi)$ where $M'$ is a monomial ideal. 
\end{lemma}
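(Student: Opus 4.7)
The plan is to prove both statements by exploiting the fact that $\phi$ is a monomial map, so it sends each monomial $z^\alpha$ to a single monomial $t^{B\alpha}$, and that membership in a monomial ideal can be checked monomial-by-monomial. The main subtlety, and the key obstacle to handle, is that distinct monomials $z^\alpha, z^{\alpha'}$ in an element $f \in \kk[z]$ may map to the \emph{same} monomial $t^\beta$ under $\phi$, creating cancellations that hide inside $\ker(\phi)$. A naive attempt to peel off monomials from $f$ one at a time therefore fails, and we must instead regroup $f$ according to the fibers of $\phi$ on monomials.

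For the first equality, the containment $\phi^{-1}(\langle m_1\rangle) + \cdots + \phi^{-1}(\langle m_r\rangle) \subseteq \phi^{-1}(M)$ is immediate from $\langle m_i \rangle \subseteq M$ and linearity of preimage under addition. For the reverse containment, I would take $f = \sum_\alpha c_\alpha z^\alpha \in \phi^{-1}(M)$ and partition its support by $\phi$-fiber: writing $f = \sum_\beta f_\beta$ where $f_\beta := \sum_{\alpha:\, B\alpha = \beta} c_\alpha\, z^\alpha$, one has $\phi(f_\beta) = \bigl(\sum_{\alpha:\, B\alpha=\beta} c_\alpha\bigr) t^\beta$. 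Since $\phi(f) = \sum_\beta \phi(f_\beta)$ is a sum of distinct monomials $t^\beta$ (with no further cancellation possible across different $\beta$), membership of $\phi(f)$ in the monomial ideal $M$ forces each nonzero term $t^\beta$ to be divisible by some $m_{i(\beta)}$. Then for those $\beta$, every monomial $z^\alpha$ appearing in $f_\beta$ satisfies $\phi(z^\alpha) = t^\beta \in \langle m_{i(\beta)}\rangle$, and so $f_\beta \in \phi^{-1}(\langle m_{i(\beta)}\rangle)$; the remaining $f_\beta$ (those with $\sum c_\alpha = 0$) lie in $\ker(\phi) \subseteq \phi^{-1}(\langle m_i \rangle)$. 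This writes $f$ in the desired sum.

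For the second assertion, I would define $M_i' := \langle z^\alpha : \phi(z^\alpha) \in \langle m_i \rangle \rangle$; this is a monomial ideal by construction, and the set of $\alpha$ with $m_i \mid t^{B\alpha}$ is upward closed so $M_i'$ behaves well. The fiber-regrouping argument above applied to $\phi^{-1}(\langle m_i\rangle)$ shows $\phi^{-1}(\langle m_i\rangle) = M_i' + \ker(\phi)$: indeed, for a fiber $\beta$ where $t^\beta \in \langle m_i \rangle$, each individual $z^\alpha$ in $f_\beta$ is already a generator of $M_i'$, while other fibers contribute only to $\ker(\phi)$. Setting $M' := M_1' + \cdots + M_r'$, which is again a monomial ideal, and combining with the first part gives $\phi^{-1}(M) = M' + \ker(\phi)$.

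The only real obstacle is the cancellation issue mentioned above, and it is disposed of cleanly by the fiber-decomposition $f = \sum_\beta f_\beta$; the rest of the argument is then a routine monomial-ideal bookkeeping exercise. No term order or weight is needed, so no interaction with Lemma~\ref{lemma:initIdealContainment} is required here.
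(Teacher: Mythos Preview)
Your proof is correct. The fiber-regrouping device $f = \sum_\beta f_\beta$ is exactly the right way to handle the cancellation issue, and your verification that each $f_\beta$ lands either in some $M_i'$ or in $\ker(\phi)$ is clean and complete.

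Note, however, that the paper does not supply its own proof of this lemma: it is quoted verbatim from \cite[Lemma~2.3]{sullivant2007toric} and stated without argument, since it is one of the two imported tools used to set up the strategy for computing $\phi^{-1}(I)$. So there is nothing in the present paper to compare your argument against. Your approach is the standard one and matches what one finds in Sullivant's original; the only stylistic remark is that your careful treatment of the ``zero-coefficient'' fibers (those with $\sum_{\alpha: B\alpha=\beta} c_\alpha = 0$) is more explicit than strictly necessary, since those $f_\beta$ can simply be absorbed into $\ker(\phi)$ without comment.
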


%\textcolor{joegreen}{I changed the outline of the strategy. I feel like this makes it clearer, but if you like the other way better, I just commented it out.}

As explained in \cite{sullivant2007toric}, these two lemmas suggest a potential strategy for determining $\phi^{-1}(I)$:

%As explained in \cite{sullivant2007toric}, these two lemmas suggest a potential strategy for determining $\phi^{-1}(I)$ which is to first determine $\phi^{-1}(\init_\omega(I))$ and then use this to find a set of polynomials $G \subseteq \phi^{-1}(I)$ such that $\langle in_{\omega^T B}(G) \rangle = \init_{\omega^T B}(\phi^{-1}(I))$ which implies $G$ is a pseudo-Gr\"obner basis for $\phi^{-1}(I)$ with respect to the weight order given by $\omega^T B$. We will follow this strategy in order to determine a Gr\"obner basis for $I \times_Q J$. We begin by proving the following lemma which is analogous to \cite[Lemma 2.5]{sullivant2007toric} for QIGs and has the exact same proof. 

\begin{enumerate}
    \item Determine $\phi_B^{-1}(\init_\omega(I))$.
    \item Find a candidate Gr\"obner basis $G \subseteq \phi^{-1}(I)$, so that by Lemma~\ref{lemma:initIdealContainment} we have \[\init_{\omega^T B}(\langle G \rangle) \subseteq \init_{\omega^T B}(\phi^{-1}(I)) \subseteq \phi^{-1}(\init_\omega(I)).\]
    \item Show that $\init_{\omega^T B}(\langle G \rangle) = \phi^{-1}(\init_\omega(I))$.
    \item Deduce that $G$ is a Gr\"obner basis for $\phi^{-1}(I)$ since
    \[\init_{\omega^T B}(\langle G \rangle) = \init_{\omega^T B}(\phi^{-1}(I)) = \phi^{-1}(\init_\omega(I)).\]
\end{enumerate}
We will follow this strategy in order to determine a Gr\"obner basis for $I \times_Q J$. We begin by proving the following lemma which is analogous to \cite[Lemma 2.5]{sullivant2007toric} for QIGs and has the exact same proof. 

\begin{lemma}
\label{lemma:singleMonomialContraction}
Let $Q \subseteq [r] \times [s]$ and let $m = x_{j_1} x_{j_2} \ldots x_{j_d}$ be a monomial in $\kk[x, y]$. Then
\[
\phi_Q^{-1}(\langle m \rangle) = \langle z_{j_1 k_1} z_{j_2 k_2} \ldots z_{j_d k_d} ~|~ k_1, \ldots k_d \in [s],  (j_\ell, k_\ell) \in Q \rangle + I_Q.
\]
Similarly, if $n = y_{k_1} y_{k_2} \ldots y_{k_d}$ is a monomial in $\kk[x, y]$. Then
\[
\phi_Q^{-1}(\langle n \rangle) = \langle z_{j_1 k_1} z_{j_2 k_2} \ldots z_{j_d k_d} ~|~ j_1, \ldots j_d \in [r],  (j_\ell, k_\ell) \in Q \rangle + I_Q.
\]
\end{lemma}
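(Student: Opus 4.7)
The strategy is to apply Lemma~\ref{lemma:monomialIdealContraction} directly: since $\langle m \rangle$ is itself a monomial ideal, that lemma immediately gives $\phi_Q^{-1}(\langle m \rangle) = M' + I_Q$ for some monomial ideal $M' \subseteq \kk[z]$. The whole proof therefore reduces to identifying $M'$ as the ideal generated by the claimed products $z_{j_1 k_1} \cdots z_{j_d k_d}$ over all admissible choices of $(k_1, \ldots, k_d) \in [s]^d$ with $(j_\ell, k_\ell) \in Q$.

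For the $\supseteq$ containment, I would simply compute that each proposed generator maps into $\langle m \rangle$: whenever $(j_\ell, k_\ell) \in Q$ for every $\ell$,
\[
\phi_Q(z_{j_1 k_1} z_{j_2 k_2} \cdots z_{j_d k_d}) = (x_{j_1} \cdots x_{j_d}) \cdot (y_{k_1} \cdots y_{k_d}) = m \cdot y_{k_1} \cdots y_{k_d} \in \langle m \rangle,
\]
and $I_Q = \ker(\phi_Q)$ is tautologically contained in $\phi_Q^{-1}(\langle m \rangle)$.

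For the $\subseteq$ direction, I would take an arbitrary monomial $z^\alpha = \prod_{(j,k) \in Q} z_{jk}^{\alpha_{jk}}$ in $\phi_Q^{-1}(\langle m \rangle)$ and use the fact that $\phi_Q$ decouples $x$- and $y$-degrees: the exponent of $x_j$ in $\phi_Q(z^\alpha)$ is exactly $\sum_k \alpha_{jk}$. Hence $\phi_Q(z^\alpha) \in \langle m \rangle$ is equivalent to $x_{j_1} \cdots x_{j_d}$ dividing the $x$-part of $\phi_Q(z^\alpha)$, i.e.\ to the multiset of first indices appearing in $z^\alpha$ containing $\{j_1, \ldots, j_d\}$ as a sub-multiset. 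A greedy selection then produces $d$ (not necessarily distinct) variables $z_{j_1 k_1}, \ldots, z_{j_d k_d}$ dividing $z^\alpha$, each automatically satisfying $(j_\ell, k_\ell) \in Q$ since they appear in $z^\alpha$. This exhibits $z^\alpha$ as a multiple of one of the claimed generators, so $z^\alpha \in M'$, completing the equality. The second statement follows by interchanging the roles of $x$ and $y$ throughout. No genuine obstacle arises; the only subtlety is that the same variable $z_{jk}$ may be selected multiple times in the greedy step when $j$ appears with multiplicity greater than one among $j_1, \ldots, j_d$, which is consistent with the statement allowing the $k_\ell$ to be repeated.
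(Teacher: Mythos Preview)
Your proposal is correct and follows essentially the same route as the paper: invoke Lemma~\ref{lemma:monomialIdealContraction} to write $\phi_Q^{-1}(\langle m\rangle)=M'+I_Q$, then identify the monomials of $M'$ by observing that $\phi_Q(z^\alpha)\in\langle m\rangle$ iff the $x$-part of $\phi_Q(z^\alpha)$ is divisible by $m$, which in turn is equivalent to $z^\alpha$ being divisible by one of the proposed generators. Your write-up is just a bit more explicit about the two containments and the multiplicity bookkeeping than the paper's version.
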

\begin{proof}
By symmetry it suffices to prove the first statement. 
By Lemma  \ref{lemma:monomialIdealContraction} we have that $\phi_Q^{-1}(\langle m \rangle) = M' + I_Q$ for some monomial ideal $M'$ which we claim is the ideal
\[
M' = \langle z_{j_1 k_1} z_{j_2 k_2} \ldots z_{j_d k_d} ~|~ k_1, \ldots k_d \in [n],  (j_\ell, k_\ell) \in Q \rangle. 
\]
Consider a monomial $m' = z_{j_{i_1} k_{i_1}} \ldots z_{j_{i_\ell} k_{i_\ell}}$ and note that 
$\phi_Q(m') = x_{j_{i_1}} y_{k_{i_1}} \ldots x_{j_{i_\ell}} y_{k_{i_\ell}}$ which belongs to $\langle m \rangle$ if and only if $x_{j_{i_1}} \ldots x_{j_{i_\ell}}$ belongs to $\langle m \rangle$. If $x_{j_{i_1}} \ldots x_{j_{i_\ell}} \in \langle m \rangle$ then $x_{j_1} x_{j_2} \ldots x_{j_d}$ divides $x_{j_{i_1}} \ldots x_{j_{i_\ell}}$ but this immediately implies that there exists a monomial in $M'$ dividing $m'$. 
\end{proof}

We now discuss how to lift Gr\"obner bases for the ideals $I$ and $J$ to a Gr\"obner basis for $I \times_Q J$. This once again is similar to the lifting presented in \cite{sullivant2007toric} (or subsection~\ref{subsec: toric fiber products}), but the lifting procedure is determined by $Q$ instead of a multigrading. 

Let $\omega$ be a weight and consider a weakly $Q$-homogeneous polynomial $f \in I$ of the form
\[
f = \sum_{\ell} c_\ell x_{j_{\ell,1}} x_{j_{\ell,2}}  \cdots x_{j_{\ell,d}}.
\]
Let $\init_\omega(f) = c_* x_{j_{*,1}} x_{j_{*,2}}  \cdots x_{j_{*,d}}$ and let $k = (k_1, \ldots, k_d) \in [s]^d$ such that the $k_i$ satisfy $(j_{*,i}, k_i) \in Q$. Note that since $f$ is weakly $Q$-homogeneous, we have that $(j_{\ell,i}, k_i) \in Q$ for all $\ell$ after rearranging each monomial $x_{j_{\ell,1}} x_{j_{\ell,2}} \ldots x_{j_{\ell,d}}$ appropriately.  

\begin{definition}
\label{defn:lift}
Let $Q \subseteq [r] \times [s]$
A \emph{lift} of $f$ by lower indices $k = (k_1,\dots,k_d)$ is a homogeneous polynomial $f_k$ of the form
\[
f_k = \sum_{\ell} c_\ell z_{j_{\ell,1} k_1} z_{j_{\ell,2} k_2} \cdots z_{j_{\ell,d} k_d}. 
\]
Furthermore, we denote the set of all possible lifts of $f$ by $\lift(f)$ which is the set
\[
\lift(f) = 
\{
f_k ~|~ k \in [s]^d,~ (j_{*,i}, k_i) \in Q,~ 1 \leq i \leq d \}
\}
\]
\end{definition}

\noindent Observe that for any polynomial $f_k \in \lift(f)$ it holds that $f_k \in I \times_Q J$ since
\[
\phi_Q(f_k) = y_{k_1} y_{k_2} \ldots y_{k_d} f \in I. 
\]
Now given a collection $F \subseteq I$ of weakly $Q$-homogeneous polynomials, let
\[
\lift(F) = \bigcup_{f \in F} \lift(f)
\]
and for a collection $G \subseteq J$ let $\lift(G)$ be the analogous collection. In terms of lifting, weak $Q$-homogeneity means that whenever the leading monomial of $f$ lifts by $k$, there exists lifts of all non-leading monomials of $f$ by $k$ as well.
 
\begin{theorem}
\label{thm:quasiIndepPseudoGrobner}
Let $F \subseteq I$ be a weakly $Q$-homogeneous Gr\"obner basis for $I$ with respect to the weight $\omega_1$ and $G \subseteq J$ be a weakly $Q$-homogeneous Gr\"obner basis  for $J$ with respect to the weight $\omega_2$. Then
\[
\lift(F) \cup \lift(G) \cup H_Q
\]
is a pseudo-Gr\"obner basis for $I \times_Q J$ with respect to the weight $\omega^T B_Q$
where $B_Q \in \zz^{(r+s) \times \# Q}$ is the matrix of exponents of the map $\phi_Q$ and $\omega = (\omega_1, \omega_2)$. 
\end{theorem}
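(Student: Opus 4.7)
The plan is to follow the four-step strategy outlined immediately before the statement: compute $\phi_Q^{-1}(\init_\omega(I+J))$ explicitly, exhibit a candidate set of generators inside $I \times_Q J$, and sandwich $\init_{\omega^T B_Q}(I \times_Q J)$ between two ideals that turn out to coincide. First I would observe that because $I \subseteq \kk[x]$ and $J \subseteq \kk[y]$ live in disjoint variables, $\init_\omega(I+J) = \init_{\omega_1}(I) + \init_{\omega_2}(J)$, which by hypothesis is generated as a monomial ideal by $\{\init_{\omega_1}(f) : f \in F\} \cup \{\init_{\omega_2}(g) : g \in G\}$. Applying Lemma~\ref{lemma:monomialIdealContraction} and then Lemma~\ref{lemma:singleMonomialContraction} monomial-by-monomial yields the clean description
\[
\phi_Q^{-1}(\init_\omega(I+J)) \;=\; \bigl\langle \Lift(\init_{\omega_1}(F)) \cup \Lift(\init_{\omega_2}(G)) \bigr\rangle + I_Q.
\]

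Next I would analyze $\init_{\omega^T B_Q}$ on each of the three families of candidate generators. For a lift $f_k$ of $f \in F$, the weight of a monomial $\prod_\ell z_{j_\ell k_\ell}$ under $\omega^T B_Q$ splits as $\sum_\ell \omega_1(x_{j_\ell}) + \sum_\ell \omega_2(y_{k_\ell})$; since all monomials of $f_k$ share the same lower indices $k_1,\dots,k_d$, the $\omega_2$-contribution is constant across monomials and $\init_{\omega^T B_Q}(f_k)$ is exactly the lift of $\init_{\omega_1}(f)$ by $k$. The crucial observation is then the analogous computation for a cycle binomial $h \in H_Q$: by Theorem~\ref{thm:gbofquasi-independence} the two monomials of $h$ carry the same multiset of $j$'s and the same multiset of $k$'s, so they have equal weight under $\omega^T B_Q$, and hence $\init_{\omega^T B_Q}(h) = h$. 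Consequently
\[
\bigl\langle \init_{\omega^T B_Q}(\Lift(F) \cup \Lift(G) \cup H_Q) \bigr\rangle \;=\; \bigl\langle \Lift(\init_{\omega_1}(F)) \cup \Lift(\init_{\omega_2}(G)) \bigr\rangle + I_Q,
\]
using that $\langle H_Q \rangle = I_Q$.

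To conclude, I would verify the containment $\Lift(F) \cup \Lift(G) \cup H_Q \subseteq I \times_Q J$: this is immediate for $H_Q$ from $I_Q = \ker \phi_Q$, and for a lift $f_k$ the image $\phi_Q(f_k) = y_{k_1}\cdots y_{k_d}\, f$ lies in $I$, giving $f_k \in \phi_Q^{-1}(I+J)$. Here is exactly where weak $Q$-homogeneity is used: it guarantees that when the indices $k$ lift $\init_{\omega_1}(f)$, they also give a valid lift of every other monomial of $f$, so $f_k$ is a well-defined element of $\kk[z_{jk} : (j,k) \in Q]$. Combining this inclusion with Lemma~\ref{lemma:initIdealContainment} gives the two-sided bound
\[
\bigl\langle \init_{\omega^T B_Q}(\Lift(F) \cup \Lift(G) \cup H_Q)\bigr\rangle \subseteq \init_{\omega^T B_Q}(I \times_Q J) \subseteq \phi_Q^{-1}(\init_\omega(I+J)),
\]
and by the two displayed computations the outer ideals coincide, forcing equality throughout and establishing the pseudo-Gröbner basis property.

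I expect the main subtlety to be isolating and using the fact that cycle binomials are weight-homogeneous under $\omega^T B_Q$ — this is what prevents $H_Q$ from needing to be enlarged and what also explains why only a \emph{pseudo}-Gröbner basis (rather than a genuine Gröbner basis for an arbitrary monomial term order) is obtained at this level of generality. A secondary point requiring care is the compatibility of lifting with leading terms under arbitrary permutations $\sigma$ allowed in the weak $Q$-homogeneity condition; I would handle this by noting that any permutation of the factors of a monomial leaves both its $\omega^T B_Q$-weight and its image under $\phi_Q$ unchanged, so the choice of $\sigma$ is cosmetic.
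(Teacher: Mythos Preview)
Your proposal is correct and follows essentially the same four-step sandwich argument as the paper's proof. You are in fact somewhat more explicit than the paper in two places: the paper asserts ``of course $m = \init_{\omega^T B_Q}(f_k)$'' without the weight-splitting justification you give, and it leaves the role of $H_Q$ implicit, whereas your observation that cycle binomials are $\omega^T B_Q$-homogeneous (so $\init_{\omega^T B_Q}(h)=h$ and $\langle \init_{\omega^T B_Q}(H_Q)\rangle = I_Q$) makes clear exactly why $H_Q$ suffices to capture the $I_Q$ summand in $\phi_Q^{-1}(\init_\omega(I+J))$.
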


\begin{proof}
%This proof follows the exact same structure as the proof of Theorem 2.8 of \cite{sullivant2007toric}. 
First recall that by Lemma \ref{lemma:initIdealContainment}, we have that
\[
\init_{\omega^T B_Q}(\phi_Q^{-1}(I+J)) \subseteq \phi_Q^{-1}(\init_\omega(I+J)). 
\]
We have already shown that $\lift(F) \cup \lift(G) \cup H_Q \subseteq I \times_Q J = \phi_Q^{-1}(I+J)$ so if we show that the leading terms of $\lift(F) \cup \lift(G) \cup H_Q$ form a pseudo-Gr\"obner basis for $\phi_Q^{-1}\left(\init_{(\omega_1,\omega_2)}(I+J)\right)$ %is actually pseudo-Gr\"obner basis for $\phi_Q^{-1}(\init_\omega(I+J))$
then we have that
\[
\init_{\omega^T B_Q}(\phi_Q^{-1}(I+J)) = \phi_Q^{-1}(\init_\omega(I+J))
\]
and thus $\lift(F) \cup \lift(G) \cup H_Q$ is a pseudo-Gr\"obner basis for $\phi_Q^{-1}(I+J) = I \times_Q J$.

Let $M = \init_\omega(I+J)$ and recall that by Lemma \ref{lemma:monomialIdealContraction} we only need to show that any monomial in $\phi_Q^{-1}(M)$ is divisible by the $\omega^T B_Q$-leading term of a polynomial in $\lift(F) \cup \lift(G)$. So suppose $m' = z_{j_1' k_1'} z_{j_2' k_2'} \ldots z_{j_t' k_t'}$ is a monomial in $\phi_Q^{-1}(M)$. Now observe that $F \cup G$ is a Gr\"obner basis for $I+J$ with respect to $\omega$ since $I$ and $J$ are in disjoint sets of variables and that any minimal generator of $M$ is either in $\kk[x]$ or $\kk[y]$ for the same reason.
So by Lemma \ref{lemma:singleMonomialContraction} $m'$ is divisible by a monomial $ m = z_{j_1 k_1} z_{j_2 k_d} \ldots z_{j_d k_d}$ and there exists some $f \in F$ or some $g \in G$ such that $m \in \lift(\init_\omega(f))$ or $m \in \lift(\init_\omega(g))$. 

We now suppose that $m \in \lift(\init_\omega(f))$ since the proof of the other case is the same. So $m$ is the lift of $in_w(f)$ for some valid tuple $k \in [s]^d$. But then $f_k \in \lift(f)$ since $f$ is weakly $Q$-homogeneous and of course $m = \init_{\omega^T B_Q}(f_k)$. Thus we have that $m'$ is divisible by the $\omega^T B_Q$-leading term of a polynomial in $\lift(F)$ or $\lift(G)$ which completes the proof.
\end{proof}

\begin{corollary}
\label{corollary:quasiIndepGrobner}
With the same assumptions as Theorem \ref{thm:quasiIndepPseudoGrobner}. Let $\omega$ be a weight vector such that $H_Q$ is a Gr\"obner basis for $I_Q$. Then there exists $\epsilon > 0$ such that
\[
\lift(F) \cup \lift(G) \cup H_Q
\]
is a Gr\"obner basis for $I \times_Q J$ with respect to the weight $\omega' = (\omega_1, \omega_2)^T B_Q + \epsilon \omega$. 
\end{corollary}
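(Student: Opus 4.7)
The plan is to upgrade the pseudo-Gr\"obner basis from Theorem~\ref{thm:quasiIndepPseudoGrobner} (under the weight $(\omega_1,\omega_2)^T B_Q$) into an honest Gr\"obner basis by perturbing slightly in the direction of the new weight $\omega$. The conceptual reason a perturbation is needed is that the elements of $\lift(F) \cup \lift(G)$ already have monomial initial forms with respect to $(\omega_1,\omega_2)^T B_Q$, whereas the binomials in $H_Q$ lie in $\ker(\phi_Q)$ and therefore carry equal weight on both of their terms; adding $\epsilon\omega$ is meant to break the ties inside $H_Q$ while leaving the lifted initial terms intact.

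To make precise that lifts already have monomial initial forms under $(\omega_1,\omega_2)^T B_Q$, I would use the identity $\phi_Q(f_k) = y_{k_1} \cdots y_{k_d} \cdot f$. The $(\omega_1,\omega_2)^T B_Q$-weight of any monomial of $f_k$ is thus the $\omega_2$-weight of $y_{k_1}\cdots y_{k_d}$ plus the $\omega_1$-weight of the corresponding monomial of $f$, and since the first summand is constant on the support of $f_k$, the strict weight gap between $\init_{\omega_1}(f)$ and the remaining terms of $f$ lifts to a strict gap in $f_k$. Because $F$ and $G$ are finite and each polynomial has only boundedly many valid lifts, $\lift(F) \cup \lift(G)$ is finite and I can take $\delta > 0$ to be the minimum of these gaps. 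Choosing $\epsilon$ small enough that $\epsilon\omega$ cannot flip any inequality of size $\geq \delta$ guarantees that the $\omega'$-initial monomial of every lift coincides with its $(\omega_1,\omega_2)^T B_Q$-initial monomial.

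For a binomial $h = m_1 - m_2 \in H_Q \subseteq I_Q$, the relation $\phi_Q(m_1) = \phi_Q(m_2)$ forces the $(\omega_1,\omega_2)^T B_Q$-weights of $m_1$ and $m_2$ to agree, so the $\omega'$-weight difference equals $\epsilon(\omega(m_1) - \omega(m_2))$ and the $\omega'$-initial term of $h$ is exactly its $\omega$-initial term, which is a single monomial by the assumption that $H_Q$ is a Gr\"obner basis for $I_Q$ with respect to $\omega$. At this stage every element of the candidate set has a monomial $\omega'$-initial form.

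The final step is to show that these monomials generate $\init_{\omega'}(I \times_Q J)$. The plan is to invoke the standard compatibility of nested initial ideals under small perturbations, namely that for $\epsilon$ sufficiently small one has
\[
\init_{\omega'}(I \times_Q J) \;=\; \init_{\epsilon\omega}\bigl(\init_{(\omega_1,\omega_2)^T B_Q}(I \times_Q J)\bigr).
\]
Theorem~\ref{thm:quasiIndepPseudoGrobner} identifies the inner initial ideal with the ideal generated by the $(\omega_1,\omega_2)^T B_Q$-initial forms of $\lift(F) \cup \lift(G) \cup H_Q$; among these, the lift initial forms are already monomials while the $H_Q$ elements reproduce $H_Q$ itself, which is a Gr\"obner basis of $I_Q$ under $\omega$. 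Applying $\init_{\epsilon\omega}$ therefore yields precisely the monomial leading terms of our candidate set. The main obstacle is pinning down this commutation of successive initial ideals cleanly together with the uniform choice of $\epsilon$ over the finite candidate set, which follows the same template as the analogous step in Corollary~14 of \cite{sullivant2007toric}; the remainder is routine bookkeeping about how perturbation acts on each of the two types of generator.
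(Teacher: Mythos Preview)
Your proof is correct and follows the same perturbation strategy as the paper, though you supply considerably more detail than the paper's two-sentence argument: you explicitly invoke the nested initial-ideal identity $\init_{\omega'}(\cdot) = \init_{\epsilon\omega}\bigl(\init_{(\omega_1,\omega_2)^T B_Q}(\cdot)\bigr)$ and verify separately how the perturbation acts on lifts versus on elements of $H_Q$. The paper's proof simply asserts that for small $\epsilon$ the weight $\omega'$ preserves the leading terms of $\lift(F)\cup\lift(G)$ while additionally selecting monomial leading terms in $H_Q$.
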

\begin{proof}
By Theorem \ref{thm:gbofquasi-independence}, there exists $\omega$ such that $H_Q$ is a Gr\"obner basis for $I_Q$. For sufficiently small $\epsilon$, $\omega'$ specifies the same leading terms of $\lift(F)$ and $\lift(G)$ while additionally determining leading terms of polynomials in $H_Q$. 
%The same proof used for \cite[Theorem 2.9]{sullivant2007toric} holds here. 
\end{proof}

The following example illustrates how one can use Theorem \ref{thm:quasiIndepPseudoGrobner} to find a generating set for a quasi-independence gluing. 

\begin{example}
\label{example:weaklySHomogeneousLifting}
Consider the following monomial map:
\[z_{ijk} \mapsto \alpha_i \beta_j \gamma_k\]
for $(i,j,k)$ in the set $Q = \{(1,0,0), (2,0,0), (0,0,1), (2,0,1), (0,1,0), (1,1,0), (0,1,1)\}$. This monomial map factors through the quasi-independence map $z_{ijk} \mapsto x_iy_{jk}$ associated to the following matrix:
\[
\begin{bNiceMatrix}[first-row,first-col] 
& y_{00} & y_{01} & y_{10} & y_{11} \\
x_0 & 0 & z_{001} & z_{010} & z_{011} \\
x_1 & z_{100} & 0 & z_{110} & 0 \\
x_2 & z_{200} & z_{201} & 0 & 0 
       \end{bNiceMatrix}
       \]
By Theorem~\ref{thm:gbofquasi-independence}, the kernel of this quasi-independence map is generated by the following cubic:
\[z_{001}z_{110}z_{200} - z_{010}z_{100}z_{201}.\]
%Another observation confirming Theorem~\ref{thm: quadratic and square-free condition for quasi-independence ideals} is the fact that $G_S$ is a $6$-cycle with an edge glued via a vertex. 
By Theorem \ref{thm:quasiIndepPseudoGrobner}, it remains to compute the lifts of the generators of the following two maps:
\[x_i \mapsto \alpha_i \quad y_{jk} \mapsto \beta_j\gamma_k.\]
The kernel of the former is the trivial ideal, so there are no lifts. Choose any weight order with weight $\omega_1$ on this trivial ideal. The kernel of the latter is generated by $y_{00}y_{11} - y_{01}y_{10}$. We have the following sets of lifts for each monomial:
\[\lift(y_{00}y_{11}) = \{\color{red}z_{100}z_{011}\color{black},\color{blue}z_{200}z_{011}\color{black}\} \quad \lift(y_{01}y_{10}) = \{z_{001}z_{010}, \color{red}z_{001}z_{110}\color{black},\color{blue} z_{201}z_{010}\color{black}, z_{201}z_{110}\}\]
Note that for each monomial in $\lift(y_{00}y_{11})$, the multiset of first coordinates also appears as a multiset of first coordinates for some monomial in $\lift(y_{01}y_{10})$ (this inclusion is depicted by a coloring of the monomials). Consequently, for any weight $\omega_2$ such that the weight order chooses leading term $y_{00}y_{11}$, we can complete all lifts of $y_{00}y_{11}$ to lifts of $y_{00}y_{11} - y_{01}y_{10}$. We obtain the following lifts:
\[z_{100}z_{011} - z_{001}z_{110} \quad \text{and} \quad z_{200}z_{011} - z_{201}z_{010}.\]
Together with the generator from the quasi-independence ideal, these lifts form a pseudo-Gr\"obner basis with respect to the weight order induced by $(\omega_1,\omega_2)^T B_Q$ where $Q$. In contrast to the toric fiber product, the quasi-independence gluing of two ideals which are quadratically generated can still yield an ideal which is not quadratically generated.
\end{example}

As we saw in Example \ref{example:weaklySHomogeneousLifting}, one must know the leading term of a polynomial to determine that it is weakly $Q$-homogeneous and construct the set of lifts. Consequently for this more general construction, one must keep track of the weight order through each iterative gluing. This differs from the toric fiber product, where one can obtain a Gr\"obner basis with respect to some weight order without ever computing the weight. This motivates the following definition.

\begin{definition}
Let $Q \subseteq [r] \times [s]$ and $f \subset \kk[x_1, \ldots, x_r]$ be a polynomial which is homogeneous with respect to total degree. Then $f$ is \emph{strongly $Q$-homogeneous} if for every two monomials $m = x_{j_1} x_{j_2} \ldots x_{j_d}$ and $m' = x_{j_1'} x_{j_2'} \ldots x_{j_d'}$ of $f$ there exists a 
permutation $\sigma$ of the elements of $[d]$ such that
% re-ordering of the $j_\ell'$ such that
\[
\{ 
(k_1, \ldots, k_d) \in [s]^d ~|~ (j_\ell, k_\ell) \in Q, 1 \leq \ell \leq d
\} 
= 
\{ 
(k_1, \ldots, k_d) \in [s]^d ~|~ (j'_{\sigma(\ell)}, k_\ell) \in Q, 1 \leq \ell \leq d
\}.
% \{ 
% (k_1, \ldots, k_d) \in [s]^d ~|~ (j'_\ell, k_\ell) \in Q, 1 \leq \ell \leq d \}. 
% \}
\]
\end{definition}

When $f$ is strongly $Q$-homogeneous, the subset relation in the definition of weakly $Q$-homogeneous holds regardless of which monomial is the leading term. Therefore it is unnecessary to keep track of the term order because it is not needed to compute the lifts of each generator. In this way, the strong version of $Q$-homogeneity is similar to the toric fiber product. 
In Section~\ref{section:CIMTree}, we use strong $Q$-homogeneity to compute a Gr\"obner basis for toric ideals associated to the face $\CIM_T$ of the characteristic imset polytope for $T$ a tree.

While the preceding results hold for ideals $I$ and $J$ which are not toric, there is a nice interpretation of the construction of $I \times_Q J$ when $I$ and $J$ are toric. 

\begin{corollary}
Let $I_A$ and $I_{A'}$ be toric ideals associated to integer matrices $A$ and $A'$. Then $I_A \times_Q I_{A'}$ is the toric ideal of the matrix $A \times_Q A'$ whose columns are all vectors of the form
$(a_j, a_k')^T$ such that $(j, k) \in Q$. 
\end{corollary}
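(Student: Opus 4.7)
The plan is to identify $I_A \times_Q I_{A'}$ with the kernel of the monomial map obtained by composing $\phi_Q$ with the parameterizations defining $I_A$ and $I_{A'}$. Write $\pi_A : \kk[x] \to \kk[t]$, $x_j \mapsto t^{a_j}$, and $\pi_{A'} : \kk[y] \to \kk[s]$, $y_k \mapsto s^{a'_k}$, for the usual monomial maps whose kernels are $I_A$ and $I_{A'}$. Since $\kk[x,y]$ and $\kk[t,s]$ are polynomial rings in disjoint unions of variables, $\pi_A$ and $\pi_{A'}$ extend uniquely to a single $\kk$-algebra homomorphism $\Pi : \kk[x,y] \to \kk[t,s]$. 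The composition $\Pi \circ \phi_Q$ then sends $z_{jk} \mapsto t^{a_j} s^{a'_k}$ for $(j,k) \in Q$, which is exactly the monomial parameterization whose kernel is the toric ideal $I_{A \times_Q A'}$.

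With this setup, the proof reduces to showing that $\ker \Pi = I_A + I_{A'}$ inside $\kk[x,y]$. The containment $I_A + I_{A'} \subseteq \ker \Pi$ is immediate since $\Pi$ extends both $\pi_A$ and $\pi_{A'}$. For the reverse containment, I would factor $\Pi$ as
\[
\kk[x,y] \twoheadrightarrow \kk[x,y] / (I_A + I_{A'}) \;\cong\; \bigl(\kk[x]/I_A\bigr) \otimes_\kk \bigl(\kk[y]/I_{A'}\bigr) \;\hookrightarrow\; \kk[t] \otimes_\kk \kk[s] \;=\; \kk[t,s],
\]
where the middle isomorphism comes from the canonical identification $\kk[x,y] \cong \kk[x] \otimes_\kk \kk[y]$, under which $I_A + I_{A'}$ corresponds to $I_A \otimes \kk[y] + \kk[x] \otimes I_{A'}$. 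The last inclusion is the tensor of the inclusions $\kk[x]/I_A \cong \mathrm{im}(\pi_A) \hookrightarrow \kk[t]$ and $\kk[y]/I_{A'} \cong \mathrm{im}(\pi_{A'}) \hookrightarrow \kk[s]$, which remains injective because tensoring over a field preserves injections. Therefore $\ker \Pi = I_A + I_{A'}$, and applying $\phi_Q^{-1}$ yields
\[
I_A \times_Q I_{A'} \;=\; \phi_Q^{-1}(I_A + I_{A'}) \;=\; \phi_Q^{-1}(\ker \Pi) \;=\; \ker(\Pi \circ \phi_Q) \;=\; I_{A \times_Q A'}.
\]

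The only step of real substance is the identification $\ker \Pi = I_A + I_{A'}$; everything else is a manipulation of definitions and a use of the universal property of $\phi_Q^{-1}$. This step relies essentially on the $x$- and $y$-variables being disjoint (both in the source and the target), which is what allows the tensor product decomposition to kick in. It is the toric analogue of the linear-independence hypothesis in Sullivant's classical toric fiber product: here no multigrading is required because disjointness of variables alone suffices to split the kernel as a sum.
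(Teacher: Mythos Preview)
Your proof is correct and follows essentially the same approach as the paper: set up the factorization $\psi_{A\times_Q A'} = \Pi\circ\phi_Q$ via a commutative diagram and conclude by pulling back kernels. The paper's proof simply records the commutative diagram and stops, leaving the identity $\ker\Pi = I_A + I_{A'}$ implicit; you make this step explicit via the tensor-product argument $(\kk[x]/I_A)\otimes_\kk(\kk[y]/I_{A'})\hookrightarrow\kk[t]\otimes_\kk\kk[s]$, which is a clean way to justify it.
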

\begin{proof}Let $\psi_A$, $\psi_{A'}$, and $\psi_{A \times_Q A'}$ be the monomial maps with associated integer matrices $A$, $A'$, and $A \times_Q A'$. The following diagram commutes
\begin{center}
  \begin{tikzcd}
\kk[z] \arrow[d, swap,"\phi_Q"] \arrow[r, "\psi_{A \times_Q A'}"] & \kk[t] \\
\kk[x, y] \arrow[ur,swap,"\psi_{A, A'}"]
\end{tikzcd}  
\end{center}
where
\begin{align*}
\psi_{A, A'} \colon \kk[x,y]&\rightarrow \kk[t]\\
x_{j} &\mapsto \psi_{A}(x_{j}) \\ y_{k} &\mapsto \psi_{A'}(y_{k}). 
\end{align*}
Hence we have the factorization $\psi(z_{jk}) = \psi_{A,A'}(x_j y_k) = \psi_A (x_j) \psi_{A'}(y_k)$, whenever $(j,k) \in Q$, which concludes the proof.
\end{proof}

\begin{example}
In Example~\ref{example:weaklySHomogeneousLifting}, we consider the quasi-independence gluing of the trivial ideal with the ideal for the image of the Segre embedding $\mathbb{P}^1 \times \mathbb{P}^1$ with respect to the given $Q$. Thus we obtain the following integer matrix: 
\[
A \times_Q A' = \begin{bNiceMatrix}[first-row,first-col]
& z_{001} & z_{010} & z_{011} & z_{100} & z_{110} & z_{200} & z_{201} \\
\alpha_0 & 1 & 1 & 1 & 0 & 0 & 0 & 0 \\
\alpha_1 & 0 & 0 & 0 & 1 & 1 & 0 & 0 \\
\alpha_2 & 0 & 0 & 0 & 0 & 0 & 1 & 1 \\
\beta_0  & 1 & 0 & 0 & 1 & 0 & 1 & 1 \\
\beta_1  & 0 & 1 & 1 & 0 & 1 & 0 & 0 \\
\gamma_0 & 0 & 1 & 0 & 1 & 1 & 1 & 0 \\
\gamma_1 & 1 & 0 & 1 & 0 & 0 & 0 & 1
\end{bNiceMatrix}.
\]
\end{example}

\section{Characteristic Imset Ideals via Quasi-Independence}
\label{section:CIMTree}

In this section we define the toric ideal of $\CIM_G$ and show that when $G$ is a tree, the ideal is an iterated quasi-independence gluing. We then use this to show that these ideals have square-free quadratic Gr\"obner bases. We use $\patp$ to denote a general pattern without reference to a specific DAG.

The vertices of $\cim_G$ correspond to the Markov equivalence classes with underlying skeleton $G$ \cite{studeny2010characteristic}. 
Consequently, we can equivalently represent the characteristic imset $c_\cg$ for $\cg$ with the pattern $\patp$ of the Markov equivalence class of $\cg$.
We do so in the following.
Given a pattern $\patp$, we let $\cg(\patp)$ denote any (fixed) DAG in the Markov equivalence class represented by $\patp$.
% Consequently we can define and compute $c_\patp$ for $\patp$ a pattern, instead of a DAG. That is, for a pattern $\patp$,
% \[
% c_\patp(S) = 
% \begin{cases}
% 1, ~\mbox{if}~ |S|=2 ~\mbox{and}~ S ~\mbox{is an edge of}~G \\
% 1, ~\mbox{if}~ |S|>2 ~\mbox{and there exists}~ i \in S ~\mbox{such that for all}~ j \in S \setminus \{i\}, ~j \rightarrow i~\mbox{in}~\patp \\
% 0, ~\mbox{otherwise}.
% \end{cases}
% \]

\begin{definition}
Let $G = ([p], E)$ be a graph and $\patp_1, \ldots, \patp_n$ be patterns which represent the Markov equivalence classes with skeleton $G$. Then the \emph{characteristic imset ideal} of $G$ is the toric ideal associated to $\cim_G$, which is the kernel of the map
\begin{align*}
\psi_G : \kk[z_{\patp_1}, \ldots, z_{\patp_n}]& \to \kk[t_S ~|~ S \subseteq [p], |S| \geq 2] \\
z_{\patp_i} &\mapsto t^{c_{\cg(\patp_i)}}. 
\end{align*}
We denote this ideal by $I_G = \ker(\psi_G)$. 
\end{definition}

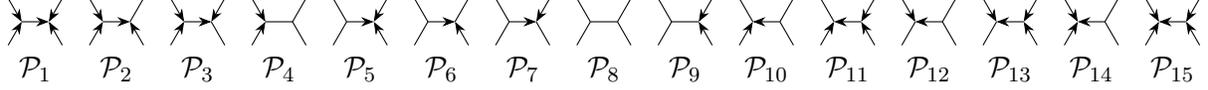
\begin{figure}[h]
\centering
\begin{tikzpicture}[scale = 0.36]

\begin{scope}[shift={(3,0)}]
\draw [Stealth-] (1,0)--(0,0);
\draw [rotate = 120, -Stealth] (1,0)--(0,0);
\draw [rotate = 240, -Stealth] (1,0)--(0,0);
\draw [xshift=1cm, rotate = 60, -Stealth] (1,0)--(0,0);
\draw [xshift=1cm, rotate = -60, -Stealth] (1,0)--(0,0);
\node at (0.5,-1.75) {$\patp_1$};
\end{scope}

\begin{scope}[shift={(6,0)}]
\draw [Stealth-] (1,0)--(0,0);
\draw [rotate = 120, -Stealth] (1,0)--(0,0);
\draw [rotate = 240, -Stealth] (1,0)--(0,0);
\draw [xshift=1cm, rotate = 60] (1,0)--(0,0);
\draw [xshift=1cm, rotate = -60, -Stealth] (1,0)--(0,0);
\node at (0.5,-1.75) {$\patp_2$};
\end{scope}

\begin{scope}[shift={(9,0)}]
\draw [Stealth-] (1,0)--(0,0);
\draw [rotate = 120, -Stealth] (1,0)--(0,0);
\draw [rotate = 240, -Stealth] (1,0)--(0,0);
\draw [xshift=1cm, rotate = 60, -Stealth] (1,0)--(0,0);
\draw [xshift=1cm, rotate = -60] (1,0)--(0,0);
\node at (0.5,-1.75) {$\patp_3$};
\end{scope}

\begin{scope}[shift={(12,0)}]
\draw (1,0)--(0,0);
\draw [rotate = 120, -Stealth] (1,0)--(0,0);
\draw [rotate = 240, -Stealth] (1,0)--(0,0);
\draw [xshift=1cm, rotate = 60] (1,0)--(0,0);
\draw [xshift=1cm, rotate = -60] (1,0)--(0,0);
\node at (0.5,-1.75) {$\patp_4$};
\end{scope}

\begin{scope}[shift={(15,0)}]
\draw [Stealth-] (1,0)--(0,0);
\draw [rotate = 120] (1,0)--(0,0);
\draw [rotate = 240] (1,0)--(0,0);
\draw [xshift=1cm, rotate = 60, -Stealth] (1,0)--(0,0);
\draw [xshift=1cm, rotate = -60, -Stealth] (1,0)--(0,0);
\node at (0.5,-1.75) {$\patp_5$};
\end{scope}

\begin{scope}[shift={(18,0)}]
\draw [Stealth-] (1,0)--(0,0);
\draw [rotate = 120] (1,0)--(0,0);
\draw [rotate = 240] (1,0)--(0,0);
\draw [xshift=1cm, rotate = 60] (1,0)--(0,0);
\draw [xshift=1cm, rotate = -60, -Stealth] (1,0)--(0,0);
\node at (0.5,-1.75) {$\patp_6$};
\end{scope}

\begin{scope}[shift={(21,0)}]
\draw [Stealth-] (1,0)--(0,0);
\draw [rotate = 120] (1,0)--(0,0);
\draw [rotate = 240] (1,0)--(0,0);
\draw [xshift=1cm, rotate = 60, -Stealth] (1,0)--(0,0);
\draw [xshift=1cm, rotate = -60] (1,0)--(0,0);
\node at (0.5,-1.75) {$\patp_7$};
\end{scope}

\begin{scope}[shift={(24,0)}]
\draw (1,0)--(0,0);
\draw [rotate = 120] (1,0)--(0,0);
\draw [rotate = 240] (1,0)--(0,0);
\draw [xshift=1cm, rotate = 60] (1,0)--(0,0);
\draw [xshift=1cm, rotate = -60] (1,0)--(0,0);
\node at (0.5,-1.75) {$\patp_8$};
\end{scope}

\begin{scope}[shift={(27,0)}]
\draw (1,0)--(0,0);
\draw [rotate = 120] (1,0)--(0,0);
\draw [rotate = 240] (1,0)--(0,0);
\draw [xshift=1cm, rotate = 60, -Stealth] (1,0)--(0,0);
\draw [xshift=1cm, rotate = -60, -Stealth] (1,0)--(0,0);
\node at (0.5,-1.75) {$\patp_9$};
\end{scope}

\begin{scope}[shift={(30,0)}]
\draw [-Stealth] (1,0)--(0,0);
\draw [rotate = 120] (1,0)--(0,0);
\draw [rotate = 240, -Stealth] (1,0)--(0,0);
\draw [xshift=1cm, rotate = 60] (1,0)--(0,0);
\draw [xshift=1cm, rotate = -60] (1,0)--(0,0);
\node at (0.5,-1.75) {$\patp_{10}$};
\end{scope}

\begin{scope}[shift={(33,0)}]
\draw [-Stealth] (1,0)--(0,0);
\draw [rotate = 120] (1,0)--(0,0);
\draw [rotate = 240, -Stealth] (1,0)--(0,0);
\draw [xshift=1cm, rotate = 60, -Stealth] (1,0)--(0,0);
\draw [xshift=1cm, rotate = -60, -Stealth] (1,0)--(0,0);
\node at (0.5,-1.75) {$\patp_{11}$};
\end{scope}

\begin{scope}[shift={(36,0)}]
\draw [-Stealth] (1,0)--(0,0);
\draw [rotate = 120, -Stealth] (1,0)--(0,0);
\draw [rotate = 240] (1,0)--(0,0);
\draw [xshift=1cm, rotate = 60] (1,0)--(0,0);
\draw [xshift=1cm, rotate = -60] (1,0)--(0,0);
\node at (0.5,-1.75) {$\patp_{12}$};
\end{scope}

\begin{scope}[shift={(39,0)}]
\draw [-Stealth] (1,0)--(0,0);
\draw [rotate = 120, -Stealth] (1,0)--(0,0);
\draw [rotate = 240] (1,0)--(0,0);
\draw [xshift=1cm, rotate = 60, -Stealth] (1,0)--(0,0);
\draw [xshift=1cm, rotate = -60, -Stealth] (1,0)--(0,0);
\node at (0.5,-1.75) {$\patp_{13}$};
\end{scope}

\begin{scope}[shift={(42,0)}]
\draw [-Stealth] (1,0)--(0,0);
\draw [rotate = 120, -Stealth] (1,0)--(0,0);
\draw [rotate = 240, -Stealth] (1,0)--(0,0);
\draw [xshift=1cm, rotate = 60] (1,0)--(0,0);
\draw [xshift=1cm, rotate = -60] (1,0)--(0,0);
\node at (0.5,-1.75) {$\patp_{14}$};
\end{scope}

\begin{scope}[shift={(45,0)}]
\draw [-Stealth] (1,0)--(0,0);
\draw [rotate = 120, -Stealth] (1,0)--(0,0);
\draw [rotate = 240, -Stealth] (1,0)--(0,0);
\draw [xshift=1cm, rotate = 60, -Stealth] (1,0)--(0,0);
\draw [xshift=1cm, rotate = -60, -Stealth] (1,0)--(0,0);
\node at (0.5,-1.75) {$\patp_{15}$};
\end{scope}
\end{tikzpicture}
\caption{A list of all patterns on the quartet tree.}
\label{fig:quartetGB}
\end{figure}

\begin{example}[Quartet Tree]
\label{ex:quartet}
Let $G$ be the skeleton of the patterns appearing in Figure~\ref{fig:quartetGB}, called the \emph{quartet}. If each pattern with skeleton $G$ is labeled as in Figure~\ref{fig:quartetGB}, then $I_G$ has the following generators:

\[\begin{array}{cccc}
&z_{\patp_1}z_{\patp_6} - z_{\patp_2}z_{\patp_5},         &z_{\patp_1}z_{\patp_7} - z_{\patp_3}z_{\patp_5}, &z_{\patp_1}z_{\patp_8} - z_{\patp_4}z_{\patp_5},\\
&z_{\patp_2}z_{\patp_7} - z_{\patp_3}z_{\patp_6}, &z_{\patp_2}z_{\patp_8} - z_{\patp_4}z_{\patp_6}, &z_{\patp_3}z_{\patp_8} - z_{\patp_4}z_{\patp_7},\\
&z_{\patp_8}z_{\patp_{11}} - z_{\patp_9}z_{\patp_{10}}, &z_{\patp_8}z_{\patp_{13}} - z_{\patp_9}z_{\patp_{12}}, &z_{\patp_8}z_{\patp_{15}} - z_{\patp_9}z_{\patp_{14}},\\
&z_{\patp_{10}}z_{\patp_{13}} - z_{\patp_{11}}z_{\patp_{12}}, &z_{\patp_{10}}z_{\patp_{15}} - z_{\patp_{11}}z_{\patp_{14}}, &z_{\patp_{12}}z_{\patp_{15}} -z_{\patp_{13}}z_{\patp_{14}}.
\end{array}\]
\end{example}

We now focus on the case where the undirected graph is a tree $T = ([p], E)$. If $e = u - v$ is a non-leaf edge $T$, then deleting $e$ from $T$ yields a graph with two connected components. 
% \textcolor{red}{L: we sometimes use $(u, v)$ for a directed edge and sometimes $u\rightarrow v$.  The former is classic graph theory but the latter is more obvious in my opinion.  I tend to use the latter.  In any case we should pick one notation and stick with it.}
% \textcolor{joegreen}{I think I fixed this issue.} {\color{orange} I checked this for undirected edges as well, although the $u - v$ notation seems a little odd.}
Let $T_u'$ and $T_v'$ be the connected components containing the vertices $u$ and $v$ respectively. Denote by $T_u$ the tree obtained by adding the edge $e$ back into $T_u'$, that is $E(T_u) = E(T_u') \cup \{e\}$ and define $T_v$ analogously. 

\begin{definition}
\label{defn:parting}
Let $T = ([p], E)$ be an undirected tree and let $\ct \in \meq(T)$ be a pattern representing a Markov equivalence class of $T$. Let $e = u - v \in T$ be a non-leaf edge.
Then the \emph{parting} of $\ct$ at $e$ is the pair of patterns $\parting(\ct, e) = (\ct_u, \ct_v)$ such that $\ct_u$ and $\ct_v$ are the induced directed subgraphs of $\ct$ on vertices of $T_u$ and $T_v$ respectively, with the orientation of $e$ in $\ct_u$ and $\ct_v$ is modified by:
\begin{enumerate}
\item If $u \to v \in \ct$ then $u \to v \in \ct_v$ and $u - v \in \ct_u$
\item If $v \to u \in \ct$ then $v \to u \in \ct_u$ and $u - v \in \ct_v$
\item If $u - v \in \ct$ then $u - v \in \ct_u$ and $u - v \in \ct_v$
\end{enumerate}
\end{definition}
% \begin{definition}
% Let $T = ([p], E)$ be an undirected tree and let $\ct \in \meq(T)$ be a pattern representing a Markov equivalence class of $T$. Let $e = u - v \in T$ be non-leaf edge.
% Then the \emph{parting} of $\ct$ at $e$ is the pair of patterns $\parting(\ct, e) = (\ct_u, \ct_v)$ such that the skeleton of $\ct_i$ is $T_i$ for $i = u, v$ and the orientation of $e$ in $\ct_u$ and $\ct_v$ is given by:
% \begin{enumerate}
% \item If $u \to v \in \ct$ then $u \to v \in \ct_v$ and $u - v \in \ct_u$
% \item If $v \to u \in \ct$ then $v \to u \in \ct_u$ and $u - v \in \ct_v$
% \item If $u - v \in \ct$ then $u - v \in \ct_u$ and $u - v \in \ct_v$
% \end{enumerate}
% \end{definition}
In words, we take the induced partially directed acyclic graphs on $T_u$ and $T_v$, removing the orientation on $u-v$ if it does not appear in a v-structure in the subgraph.
%\textcolor{red}{In words, the parting breaks the tree into two subtrees corresponding to the two connected components resulting from removing the edge, but where we include the separating edge in both subtrees. If the edge is oriented in the pattern, it must be in a v-structure, either with center node $u$ or center node $v$, but it cannot be in both.  Hence, the v-structure appears in which ever of the subtrees it has both edges in, and the edge becomes unoriented in the other tree (as it is not in a v-structure in that subtree.  These are cases (1) and (2) in the above definition.  
%If the edge is not oriented in the pattern $\ct$ then it should not appear in a v-structure in either subtree, which is case (3). }
This operation is depicted in Figure~\ref{fig:parting}. 

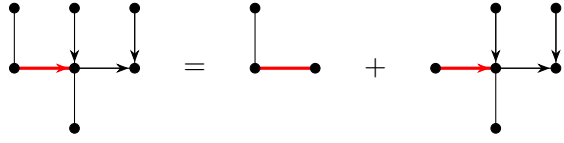
\begin{figure}
\centering
\begin{tikzpicture}[scale = 0.8]

\draw (-1,2)--(-1,1)--(1,1)--(1,2);
\draw (0,0)--(0,2);

\draw [-Stealth] (0,2)--(0,1.1);
\draw [-Stealth] (0,1)--(0.9,1);
\draw [-Stealth] (1,2)--(1,1.1);

\draw [red, very thick] (-1,1)--(0,1);
\draw [red,-Stealth] (-1,1)--(-0.1,1);

\draw [fill=black] (0,0) circle [radius=0.08cm];

\foreach \x in {-1,...,1}{
\foreach \y in {1,...,2}{
\draw [fill=black] (\x,\y) circle [radius=0.08cm];
}
}

\node at (2,1) {=};

\begin{scope}[shift={(4,0)}]
\draw (-1,2)--(-1,1)--(0,1);
\draw [red, very thick] (-1,1)--(0,1);

\draw [fill=black] (-1,2) circle [radius=0.08cm];
\draw [fill=black] (-1,1) circle [radius=0.08cm];
\draw [fill=black] (0,1) circle [radius=0.08cm];
\end{scope}

\node at (5,1) {+};

\begin{scope}[shift={(7,0)}]
\draw (-1,1)--(1,1)--(1,2);
\draw (0,0)--(0,2);

\draw [-Stealth] (0,2)--(0,1.1);
\draw [-Stealth] (0,1)--(0.9,1);
\draw [-Stealth] (1,2)--(1,1.1);

\draw [red,-Stealth] (-1,1)--(-0.1,1);
\draw [red, very thick] (-1,1)--(0,1);

\draw [fill=black] (0,2) circle [radius=0.08cm];
\draw [fill=black] (1,2) circle [radius=0.08cm];

\draw [fill=black] (-1,1) circle [radius=0.08cm];
\draw [fill=black] (0,1) circle [radius=0.08cm];
\draw [fill=black] (1,1) circle [radius=0.08cm];

\draw [fill=black] (0,0) circle [radius=0.08cm];
\end{scope}
\end{tikzpicture}
\caption{The graph on the left parts along the red edge.}
\label{fig:parting}
\end{figure}

\begin{theorem}
\label{thm:cimTreeQIG}
Let $T = ([p], E)$ be a tree, $e = u - v$ be a non-leaf edge of $T$.% and $T_u, T_v$ be the undirected trees obtained from parting $T$ at $e$. Let $I_T, I_{T_u}, I_{T_v}$ be the CIM toric ideals of $T, T_u, T_v$ respectively. 
Then $I_T = I_{T_u} \times_Q I_{T_v}$ where $Q$ is the set $Q = \{\parting(\ct, e) ~|~ \ct \in \meq(T)\}$. 
\end{theorem}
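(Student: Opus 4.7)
The plan is to recognize $I_T$ as the toric ideal of a matrix whose columns are the characteristic imsets $c_\ct$ for $\ct \in \meq(T)$, and then to compare it with the matrix associated to the quasi-independence gluing $I_{T_u} \times_Q I_{T_v}$, whose columns (by the final corollary of Section~\ref{sec:QIG}) are the concatenations $(c_{\ct_u}, c_{\ct_v})$ for $(\ct_u, \ct_v) \in Q$. I will argue that these two matrices have the same integer kernel, hence define the same toric ideal.

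First, I would show that the parting map $\ct \mapsto \parting(\ct, e)$ is a bijection $\meq(T) \to Q$. Surjectivity is by definition of $Q$. For injectivity, observe that given $(\ct_u, \ct_v)$, one recovers $\ct$ by taking the union of the edges of $\ct_u$ and $\ct_v$ and reading the orientation of $e$ from whichever side has $e$ oriented, leaving $e$ undirected in $\ct$ if and only if it is undirected in both parts. The three rules in Definition~\ref{defn:parting} ensure that this reconstruction is unambiguous, and a separate check verifies that the reconstructed object is indeed a valid pattern on $T$.

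Next, the central lemma is that for each $\ct \in \meq(T)$ and each $S \subseteq [p]$ with $|S| \geq 2$, one has $c_\ct(S) = c_{\ct_u}(S)$ whenever $S \subseteq V(T_u)$, and $c_\ct(S) = c_{\ct_v}(S)$ whenever $S \subseteq V(T_v)$, and otherwise $c_\ct(S) = 0$. The essential observation is that $c_\ct(S) = 1$ forces $S$ to lie in the closed neighborhood of some vertex $i \in S$; since $T$ is a tree and $e$ is an internal edge, any such closed neighborhood lies entirely inside $V(T_u)$ or entirely inside $V(T_v)$. This gives the vanishing statement. The first two identities follow by case analysis on whether the center $i$ of the star equals $u$, equals $v$, or is neither; the parting rules preserve the orientation of $e$ precisely on the side where $e$ is the arrowhead of a possible v-structure, so membership in $\pa(i)$ is correctly tracked by $\ct_u$ or $\ct_v$ in every case.

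Finally, by the toric corollary at the end of Section~\ref{sec:QIG}, $I_{T_u} \times_Q I_{T_v}$ is the toric ideal of the matrix whose columns are $(c_{\ct_u}, c_{\ct_v})^T$ for $(\ct_u, \ct_v) \in Q$. Under the parting bijection these columns correspond to $\ct \in \meq(T)$, and by the central lemma the resulting matrix differs from the one defining $I_T$ only by (i) the all-zero rows corresponding to sets $S$ lying in neither $V(T_u)$ nor $V(T_v)$ (which appear in the $I_T$ matrix only), and (ii) the duplicated all-ones row indexed by $S = \{u, v\}$ (which appears in both blocks of the glued matrix but only once in the $I_T$ matrix, since $c_\ct(\{u,v\}) = c_{\ct_u}(\{u,v\}) = c_{\ct_v}(\{u,v\}) = 1$ for every $\ct$). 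Neither operation alters the integer kernel of the matrix, so the associated toric ideals agree, giving $I_T = I_{T_u} \times_Q I_{T_v}$. The main obstacle is the case analysis in the central lemma, specifically verifying that the interplay between v-structures and the parting rules correctly preserves which arrows appear in $\ct_u$ and $\ct_v$ for every choice of star center relative to $e$.
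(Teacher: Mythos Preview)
Your proof is correct and follows essentially the same approach as the paper's: your ``central lemma'' is precisely the paper's observation that each relevant $t_S$ appears in exactly one of $\psi_{T_u}(x_{\ct_u})$ or $\psi_{T_v}(y_{\ct_v})$, and your handling of the duplicated $\{u,v\}$-row is exactly the paper's squaring of $t_{\{u,v\}}$. The only difference is packaging---the paper phrases the argument via a commutative diagram of monomial maps, whereas you invoke the matrix corollary from Section~\ref{sec:QIG} (itself proved by that same diagram) and compare integer kernels directly.
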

\begin{proof}
To prove this we extend the main technique introduced in \cite[Section 3]{sullivant2007toric} to show that ideals are toric fiber products to QIGs. 

Let $\ct \in \meq(T)$ and $\parting(\ct, e) = (\ct_u, \ct_v)$. Let the ambient rings of $I_T, I_{T_u}$ and $I_{T_v}$ be  $\kk[z] = \kk[z_\ct ~|~ \ct \in \meq(T)]$, $\kk[x] = \kk[x_{\ct_u} ~|~ \ct_u \in \meq(T_u)]$, and $\kk[y] = \kk[y_{\ct_v} ~|~ \ct_v \in \meq(T_v)]$ respectively. If we square the parameter 
$t_{\{u, v \}}$ everywhere it appears in $\psi_T$ then
\begin{equation}
\label{eqn:treeMapFactors}
\psi_T(z_\ct) = \psi_{T_u}(x_{\ct_u}) \psi_{T_v}(y_{\ct_v}).   
\end{equation}

This follows because if $S \subseteq [p]$ is the vertex set of a star subgraph of $T$ (so that $t_S$ appears in $\psi_T(z_\ct)$ or $\psi_{T_u}(x_{\ct_u})\psi_{T_v}(y_{\ct_v})$) and $S \neq \{u, v\}$, then $S$ is a subset of the vertices of $T_u$ or $T_v$ and thus either $T|_S = T_u |_S$ or $T|_S = T_v |_S$. So then the parameter $t_S$ appears in $\psi_T(z_\ct)$ if and only if $t_S$ it appears in either $\psi_{T_u}(x_{\ct_u})$ or $\psi_{T_v}(y_{\ct_v})$. Note that squaring the parameter $t_{\{u, v \}}$ everywhere it appears in $\psi_T$ does not change the kernel of the map whatsoever. The factorization in Equation~\ref{eqn:treeMapFactors} implies that the following diagram commutes

\begin{center}
  \begin{tikzcd}
\kk[z] \arrow[d, "\phi_Q"] \arrow[r, "\psi_T"] & \kk[t_S ~|~ S \subseteq [p], |S| \geq 2] \\
\kk[x, y] \arrow[ur, "\psi_{T_u, T_v}"]
\end{tikzcd}  
\end{center}
where the map $\psi_{T_u, T_v}$ is given by
\begin{align*}
\psi_{T_u, T_v}: \kk[x,y] &\to \kk[t_S] \\
x_{\ct_u} &\mapsto \psi_{T_u}(x_{\ct_u}) \\ 
y_{\ct_v} &\mapsto \psi_{T_v}(y_{\ct_v}). 
\end{align*}
But recall that $I_T = \ker(\psi_T) = \psi_T^{-1}(0)$. Since the above diagram commutes, we have
\[
\psi_T^{-1}(0) = \phi_Q^{-1}(\psi_{T_u, T_v}^{-1}(0)) = \phi_Q^{-1}(I_{T_u} + I_{T_v}) = I_{T_u} \times_Q I_{T_v} 
\]
which completes the proof.
\end{proof}

\begin{example}[Quartet Tree] In general, to compute a Gr\"obner basis for a QIG one must show that the defining monomial map factors through a quasi-independence map $\phi_Q$ and that the two ideals that we wish to glue together are weakly $Q$-homogeneous. Let $T$ be the quartet graph, with vertices labeled by elements of $[6]$ and edges $\{ 1 - 3, 2 - 3, 3 - 4, 4 - 5, 4 - 6\}$. Let $Q \subseteq \meq(T|_{1234}) \times \meq(T|_{3456})$ be the set of all partings of patterns with skeleton $T$. %pairs of patterns such that the first coordinate has no v-structure with sink vertex 3 and the second coordinate has no v-structure with sink vertex 4.
%the edge $3 - 4$ does not have orientation $3 \rightarrow 4$ in one pattern and orientation $3 \leftarrow 4$ in the other.

%\textcolor{joegreen}{\textbf{NEW COMMENT:} I think I got rid of the essential graph everywhere in sections 4 and 5.} {\color{orange}PDAGs of two star graphs with vertices 1,2,3,4 and 3,4,5,6, or so?}
%\textcolor{red}{L: I see what it happening now and the distinction is very important.  We are using compatible essential graph structures to define the structural zeros, but once we do that we can reduce our computations to considering only the patterns for these essential graphs.  For instance, this is why the entry $(1,5)$ in the matrix in Figure 3 must be a structural zero.  Regarding my previous comment: one could take the toric fiber product where we glue everything.  Some gluings won't result in characteristic imsets these are the vertices of the polytope we would disregard.  The question is whether or not all binomials in the GB for this toric fiber product that use variables indexed by characteristic imsets using ONLY variables indexed by characteristic imsets.  In this case we could recover our GB via elimination.}

We claim $\psi_T$ factors. Consider the pattern $\ct$ with directed edges $1 \rightarrow 3$, $2 \rightarrow 3$, $3 \rightarrow 4$, and $6 \rightarrow 4$. Then under $\psi_T$ we have
\[z_\ct \mapsto t_{13}t_{23}t_{123}t_{34}t_{45}t_{46}t_{346}.\]
After squaring $t_{34}$ (which does not change the kernel), we have the following factorization:
\[z_{\ct} \mapsto x_{\ct_3}y_{\ct_4}, \quad x_{\ct_3} \mapsto t_{13}t_{23}t_{34}t_{123} \quad y_{\ct_4} \mapsto t_{34}t_{45}t_{46}t_{346}\]
where $\ct_3$ and $\ct_4$ are defined as in Definition~\ref{defn:parting}. Similarly we see this factorization for all other patterns.

For the quartet (and more generally for all bistars) the two trees we glue together are star graphs. Consequently their characteristic imset ideals are trivial and are strongly $Q$-homogeneous. It follows that the QIG is $\phi_Q^{-1}(0)$, in particular it is the quasi-independence ideal of the bipartite graph $G_Q$. By Theorem~\ref{thm: quadratic and square-free condition for quasi-independence ideals}, the kernel of $\phi_Q$ is given by the $2 \times 2$ minors of the matrix in Figure~\ref{fig:bistarGluingRule} that contain no structural zeros, which are the binomials appearing in Example~\ref{ex:quartet}. 

Recall that toric fiber products are QIGs associated to block diagonal matrices. In Figure~\ref{fig:bistarGluingRule} we have two overlapping blocks, one for each possible orientation of the gluing edge. These blocks overlap in the pattern with no v-structures, and so this characteristic imset ideal fails to be a toric fiber product. 

%\textcolor{joegreen}{Is there a simple algebro-geometric reason why this is true, even up to (not necessarily linear) change of coordinate? Note that for trees, the dimension of variety after QIG is the sum of the dimensions of the varieties that we glue. Can we use this?} {\color{orange} I guess sum of the dimensions minus one: for this example 5+5-1. I think if these were block diagonal matrices, to calculate the dimension we would sum up the number of rows and columns and then substract the number of blocks.}

%Recall that toric fiber products are QIGs associated to block diagonal matrices. In Figure~\ref{fig:bistarGluingRule} we have two blocks, one where the gluing edge is allowed to be oriented leftward (in blue) and another where it can be oriented rightward (in red). These blocks overlap in the pattern with no v-structures. If one were to attempt to represent this gluing as a toric fiber product, the patterns with no v-structures in the index sets of the rows and columns would need to be assigned two multigradings so that they could glue to patterns with either orientation of the gluing edge. Characteristic imset ideals associated to trees fail to be toric fiber products because the candidate multigradings that define the gluing rule fail to be functions.

\begin{figure}[h]
\centering
\begin{tikzpicture}[scale=0.33]
%BIG BISTAR
\begin{scope}[shift = {(-8,-5)}, scale = 3]
\draw (1,0)--(0,0);
\draw [rotate = 120] (1,0)--(0,0);
\draw [rotate = 240] (1,0)--(0,0);
\draw [xshift=1cm, rotate = 60] (1,0)--(0,0);
\draw [xshift=1cm, rotate = -60] (1,0)--(0,0);
\node at (0.5,-1.3) {$T$};

\node at (-0.7,0.866) {1};
\node at (-0.7,-0.866) {2};
\node at (-0.3,0) {3};
\node at (1.3,0) {4};
\node at (1.7,0.866) {5};
\node at (1.7,-0.866) {6};
\end{scope}

%EVERYTHING ELSE
\draw [thick] (2,-11.1)--(1.6,-11.1)--(1.6,1.1)--(2,1.1);
\draw [thick] (17,-11.1)--(17.4,-11.1)--(17.4,1.1)--(17,1.1);

%ROW INDICES
\begin{scope}
\draw (1,0)--(0,0);
\draw [rotate = 120, -Stealth] (1,0)--(0,0);
\draw [rotate = 240, -Stealth] (1,0)--(0,0);
\end{scope}

\begin{scope}[shift={(0,-2.5)}]
\draw (1,0)--(0,0);
\draw [rotate = 120] (1,0)--(0,0);
\draw [rotate = 240] (1,0)--(0,0);
\end{scope}

\begin{scope}[shift={(0,-5)}]
\draw [-Stealth] (1,0)--(0,0);
\draw [rotate = 120] (1,0)--(0,0);
\draw [rotate = 240, -Stealth] (1,0)--(0,0);
\end{scope}

\begin{scope}[shift={(0,-7.5)}]
\draw [-Stealth] (1,0)--(0,0);
\draw [rotate = 120, -Stealth] (1,0)--(0,0);
\draw [rotate = 240] (1,0)--(0,0);
\end{scope}

\begin{scope}[shift={(0,-10)}]
\draw [-Stealth] (1,0)--(0,0);
\draw [rotate = 120, -Stealth] (1,0)--(0,0);
\draw [rotate = 240, -Stealth] (1,0)--(0,0);
\end{scope}

%COLUMN INDICES
\begin{scope}[shift={(2.75,2.5)}]
\draw [Stealth-] (1,0)--(0,0);
\draw [xshift=1cm, rotate = 60, -Stealth] (1,0)--(0,0);
\draw [xshift=1cm, rotate = -60, -Stealth] (1,0)--(0,0);
\end{scope}

\begin{scope}[shift={(5.75,2.5)}]
\draw [Stealth-] (1,0)--(0,0);
\draw [xshift=1cm, rotate = 60] (1,0)--(0,0);
\draw [xshift=1cm, rotate = -60, -Stealth] (1,0)--(0,0);
\end{scope}

\begin{scope}[shift={(8.75,2.5)}]
\draw [Stealth-] (1,0)--(0,0);
\draw [xshift=1cm, rotate = 60, -Stealth] (1,0)--(0,0);
\draw [xshift=1cm, rotate = -60] (1,0)--(0,0);
\end{scope}

\begin{scope}[shift={(11.75,2.5)}]
\draw (1,0)--(0,0);
\draw [xshift=1cm, rotate = 60] (1,0)--(0,0);
\draw [xshift=1cm, rotate = -60] (1,0)--(0,0);
\end{scope}

\begin{scope}[shift={(14.75,2.5)}]
\draw (1,0)--(0,0);
\draw [xshift=1cm, rotate = 60, -Stealth] (1,0)--(0,0);
\draw [xshift=1cm, rotate = -60, -Stealth] (1,0)--(0,0);
\end{scope}

\draw [red, fill = red, opacity = 0.25] (2.3,1.2)--(13.7,1.2)--(13.7,-3.7)--(2.3,-3.7);
\draw [blue, fill = blue, opacity = 0.25] (11.3,-1.3)--(16.7,-1.3)--(16.7,-11.2)--(11.3,-11.2);

%ROW 1
\begin{scope}[shift={(3,0)}]
\draw [Stealth-] (1,0)--(0,0);
\draw [rotate = 120, -Stealth] (1,0)--(0,0);
\draw [rotate = 240, -Stealth] (1,0)--(0,0);
\draw [xshift=1cm, rotate = 60, -Stealth] (1,0)--(0,0);
\draw [xshift=1cm, rotate = -60, -Stealth] (1,0)--(0,0);
\end{scope}

\begin{scope}[shift={(6,0)}]
\draw [Stealth-] (1,0)--(0,0);
\draw [rotate = 120, -Stealth] (1,0)--(0,0);
\draw [rotate = 240, -Stealth] (1,0)--(0,0);
\draw [xshift=1cm, rotate = 60] (1,0)--(0,0);
\draw [xshift=1cm, rotate = -60, -Stealth] (1,0)--(0,0);
\end{scope}

\begin{scope}[shift={(9,0)}]
\draw [Stealth-] (1,0)--(0,0);
\draw [rotate = 120, -Stealth] (1,0)--(0,0);
\draw [rotate = 240, -Stealth] (1,0)--(0,0);
\draw [xshift=1cm, rotate = 60, -Stealth] (1,0)--(0,0);
\draw [xshift=1cm, rotate = -60] (1,0)--(0,0);
\end{scope}

\begin{scope}[shift={(12,0)}]
\draw (1,0)--(0,0);
\draw [rotate = 120, -Stealth] (1,0)--(0,0);
\draw [rotate = 240, -Stealth] (1,0)--(0,0);
\draw [xshift=1cm, rotate = 60] (1,0)--(0,0);
\draw [xshift=1cm, rotate = -60] (1,0)--(0,0);
\end{scope}

\begin{scope}[shift={(15,0)}]
\node at (0.5,0) {0};
\end{scope}

%ROW 2
\begin{scope}[shift={(3,-2.5)}]
\draw [Stealth-] (1,0)--(0,0);
\draw [rotate = 120] (1,0)--(0,0);
\draw [rotate = 240] (1,0)--(0,0);
\draw [xshift=1cm, rotate = 60, -Stealth] (1,0)--(0,0);
\draw [xshift=1cm, rotate = -60, -Stealth] (1,0)--(0,0);
\end{scope}

\begin{scope}[shift={(6,-2.5)}]
\draw [Stealth-] (1,0)--(0,0);
\draw [rotate = 120] (1,0)--(0,0);
\draw [rotate = 240] (1,0)--(0,0);
\draw [xshift=1cm, rotate = 60] (1,0)--(0,0);
\draw [xshift=1cm, rotate = -60, -Stealth] (1,0)--(0,0);
\end{scope}

\begin{scope}[shift={(9,-2.5)}]
\draw [Stealth-] (1,0)--(0,0);
\draw [rotate = 120] (1,0)--(0,0);
\draw [rotate = 240] (1,0)--(0,0);
\draw [xshift=1cm, rotate = 60, -Stealth] (1,0)--(0,0);
\draw [xshift=1cm, rotate = -60] (1,0)--(0,0);
\end{scope}

\begin{scope}[shift={(12,-2.5)}]
\draw (1,0)--(0,0);
\draw [rotate = 120] (1,0)--(0,0);
\draw [rotate = 240] (1,0)--(0,0);
\draw [xshift=1cm, rotate = 60] (1,0)--(0,0);
\draw [xshift=1cm, rotate = -60] (1,0)--(0,0);
\end{scope}

\begin{scope}[shift={(15,-2.5)}]
\draw (1,0)--(0,0);
\draw [rotate = 120] (1,0)--(0,0);
\draw [rotate = 240] (1,0)--(0,0);
\draw [xshift=1cm, rotate = 60, -Stealth] (1,0)--(0,0);
\draw [xshift=1cm, rotate = -60, -Stealth] (1,0)--(0,0);
\end{scope}

%ROW 3
\begin{scope}[shift={(3,-5)}]
\node at (0.5,0) {0};
\end{scope}

\begin{scope}[shift={(6,-5)}]
\node at (0.5,0) {0};
\end{scope}

\begin{scope}[shift={(9,-5)}]
\node at (0.5,0) {0};
\end{scope}

\begin{scope}[shift={(12,-5)}]
\draw [-Stealth] (1,0)--(0,0);
\draw [rotate = 120] (1,0)--(0,0);
\draw [rotate = 240, -Stealth] (1,0)--(0,0);
\draw [xshift=1cm, rotate = 60] (1,0)--(0,0);
\draw [xshift=1cm, rotate = -60] (1,0)--(0,0);
\end{scope}

\begin{scope}[shift={(15,-5)}]
\draw [-Stealth] (1,0)--(0,0);
\draw [rotate = 120] (1,0)--(0,0);
\draw [rotate = 240, -Stealth] (1,0)--(0,0);
\draw [xshift=1cm, rotate = 60, -Stealth] (1,0)--(0,0);
\draw [xshift=1cm, rotate = -60, -Stealth] (1,0)--(0,0);
\end{scope}

%ROW 4
\begin{scope}[shift={(3,-7.5)}]
\node at (0.5,0) {0};
\end{scope}

\begin{scope}[shift={(6,-7.5)}]
\node at (0.5,0) {0};
\end{scope}

\begin{scope}[shift={(9,-7.5)}]
\node at (0.5,0) {0};
\end{scope}

\begin{scope}[shift={(12,-7.5)}]
\draw [-Stealth] (1,0)--(0,0);
\draw [rotate = 120, -Stealth] (1,0)--(0,0);
\draw [rotate = 240] (1,0)--(0,0);
\draw [xshift=1cm, rotate = 60] (1,0)--(0,0);
\draw [xshift=1cm, rotate = -60] (1,0)--(0,0);
\end{scope}

\begin{scope}[shift={(15,-7.5)}]
\draw [-Stealth] (1,0)--(0,0);
\draw [rotate = 120, -Stealth] (1,0)--(0,0);
\draw [rotate = 240] (1,0)--(0,0);
\draw [xshift=1cm, rotate = 60, -Stealth] (1,0)--(0,0);
\draw [xshift=1cm, rotate = -60, -Stealth] (1,0)--(0,0);
\end{scope}

%ROW 5
\begin{scope}[shift={(3,-10)}]
\node at (0.5,0) {0};
\end{scope}

\begin{scope}[shift={(6,-10)}]
\node at (0.5,0) {0};
\end{scope}

\begin{scope}[shift={(9,-10)}]
\node at (0.5,0) {0};
\end{scope}

\begin{scope}[shift={(12,-10)}]
\draw [-Stealth] (1,0)--(0,0);
\draw [rotate = 120, -Stealth] (1,0)--(0,0);
\draw [rotate = 240, -Stealth] (1,0)--(0,0);
\draw [xshift=1cm, rotate = 60] (1,0)--(0,0);
\draw [xshift=1cm, rotate = -60] (1,0)--(0,0);
\end{scope}

\begin{scope}[shift={(15,-10)}]
\draw [-Stealth] (1,0)--(0,0);
\draw [rotate = 120, -Stealth] (1,0)--(0,0);
\draw [rotate = 240, -Stealth] (1,0)--(0,0);
\draw [xshift=1cm, rotate = 60, -Stealth] (1,0)--(0,0);
\draw [xshift=1cm, rotate = -60, -Stealth] (1,0)--(0,0);
\end{scope}
\end{tikzpicture}
%\caption{The matrix associated to the QIG defining the characteristic imset polytope of the bistar. Structural zeros correspond to row and column indexing patterns where the gluing edge in every pair of associated DAGs has opposing orientations.}
\caption{The matrix of patterns associated to the QIG defining the characteristic imset polytope of the bistar. Structural zeros correspond to pairs of essential graphs indexing rows and columns where the gluing edge in both essential graphs has forced opposing orientations.}
\label{fig:bistarGluingRule}
\end{figure}
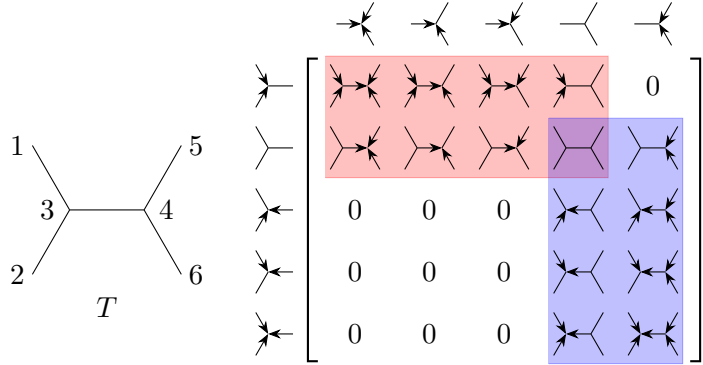

\end{example}

We have shown that the ideal $I_T = I_{T_u} \times_Q I_{T_v}$, but to use Theorem \ref{thm:quasiIndepPseudoGrobner} to find a Gr\"obner basis for $I_T$ we will show that $I_{T_u}$ and $I_{T_v}$ are actually strongly $Q$-homogeneous.   

\begin{lemma}
\label{lemma:cimTreeIdealHom}
Let $T = ([p], E)$ be a tree, $e = u - v$ be a non-leaf edge of $T$ and $T_u, T_v$ be the undirected trees obtained from parting $T$ at $e$. Let $Q = \{\parting(\ct, e) ~|~ \ct \in \meq(T)\}$. Then $I_{T_u}$ and $I_{T_v}$ are strongly $Q$-homogeneous. 
\end{lemma}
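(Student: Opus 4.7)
The plan is to analyze the set $Q$ locally at the gluing edge $e = u - v$ and then reduce strong $Q$-homogeneity to a single counting identity that is forced by the multigrading of the parameterization $\psi_{T_u}$.

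First, since $v$ has degree one in $T_u$, no v-structure in $T_u$ can be centered at $v$, so the edge $u - v$ in any pattern $\ct_u \in \meq(T_u)$ is either undirected or oriented as $v \to u$ (and never as $u \to v$). By an analogous argument, the edge $u - v$ in any $\ct_v \in \meq(T_v)$ is either undirected or oriented as $u \to v$. A direct inspection of Definition~\ref{defn:parting} then shows that the set $\{\ct_v : (\ct_u, \ct_v) \in Q\}$ depends only on the orientation of $u - v$ in $\ct_u$: if $v \to u \in \ct_u$, the valid partners are exactly those $\ct_v$ with $u - v$ undirected; if $u - v$ is undirected in $\ct_u$, the valid partners are those $\ct_v$ with $u - v$ undirected or with $u \to v$ directed. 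Call this orientation the \emph{local type} of $\ct_u$ at $e$.

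Since $I_{T_u}$ is toric, it suffices to verify strong $Q$-homogeneity for an arbitrary binomial $m - m' \in I_{T_u}$ with $m = \prod_{i=1}^d x_{\ct_{u,i}}$ and $m' = \prod_{i=1}^d x_{\ct_{u,i}'}$. By the previous paragraph, the valid lifting set of a variable $x_{\ct_u}$ depends only on its local type, so strong $Q$-homogeneity for $m - m'$ is equivalent to the statement that $m$ and $m'$ contain the same multiset of local types. With only two possible local types, this reduces to the single equality
\[
\#\{i : v \to u \in \ct_{u,i}\} \;=\; \#\{i : v \to u \in \ct_{u,i}'\}.
\]

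The main technical step, and the expected obstacle, is to realize the indicator $\mathbbm{1}[v \to u \in \ct_u]$ as an integer linear combination of characteristic imset coordinates, so that this count equality follows from the defining relations of the toric ideal. Because $v$ has $u$ as its only neighbor in $T_u$, a case analysis on which vertex can play the role of the common child shows that for any nonempty $W \subseteq N_{T_u}(u) \setminus \{v\}$,
\[
c_{\cg(\ct_u)}(\{u, v\} \cup W) \;=\; \mathbbm{1}\bigl[v \to u \in \ct_u \text{ and } w \to u \in \ct_u \text{ for all } w \in W\bigr].
\]
Setting $W^* := \{w \in N_{T_u}(u) \setminus \{v\} : w \to u \in \ct_u\}$, and using that $v \to u$ in a valid pattern forces a v-structure at $u$ so that $|W^*| \geq 1$ whenever $v \to u \in \ct_u$, a standard inclusion--exclusion based on $\sum_{W \subseteq W^*} (-1)^{|W|} = 0$ yields
\[
\mathbbm{1}[v \to u \in \ct_u] \;=\; \sum_{\emptyset \neq W \subseteq N_{T_u}(u) \setminus \{v\}} (-1)^{|W|+1} \, c_{\cg(\ct_u)}(\{u, v\} \cup W).
\]

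Summing this identity over $i = 1, \ldots, d$ and invoking the componentwise equality $\sum_i c_{\cg(\ct_{u,i})} = \sum_i c_{\cg(\ct_{u,i}')}$ (which holds since $m - m' \in \ker \psi_{T_u} = I_{T_u}$) gives the required count equality, proving strong $Q$-homogeneity for $I_{T_u}$. Exchanging the roles of $u$ and $v$ and repeating the argument on the $T_v$ side handles $I_{T_v}$.
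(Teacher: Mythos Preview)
Your first reduction contains a real gap. You assert that the partner set $\{\ct_v : (\ct_u,\ct_v)\in Q\}$ depends only on the orientation of $u-v$ in the \emph{pattern} $\ct_u$, but $Q$ is defined as the \emph{image} of $\parting(-,e)$ over $\meq(T)$, and not every locally compatible pair comes from a genuine $\ct\in\meq(T)$. The quartet already shows this (cf.\ Figure~\ref{fig:bistarGluingRule}): take $\ct_u$ with the v-structure $a_1\to u\leftarrow a_2$ and $u-v$ undirected, and $\ct_v$ with $b_1\to v\leftarrow b_2$ and $u-v$ undirected. By your criterion these should glue, yet any DAG on $T$ with $a_1,a_2\to u$ and $b_1,b_2\to v$ forces $u-v$ into a v-structure whichever way it is oriented, so $(\ct_u,\ct_v)\notin Q$; this is precisely the structural zero in the upper-right of Figure~\ref{fig:bistarGluingRule}. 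Meanwhile the fully undirected pattern on $T_u$, which also has $u-v$ undirected, \emph{does} pair with this $\ct_v$. So two patterns with the same ``local type'' in your sense have different partner sets, and matching your single count does not yield matching lift sets.

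The invariant that actually governs the partner set is the orientation of $e$ in the \emph{essential graph} of $\ct_u$, which takes three values ($u\to v$, undirected, $v\to u$) rather than two; the paper's proof is organised around exactly this trichotomy. Your inclusion--exclusion identity is correct and genuinely useful: it expresses $\mathbbm{1}[v\to u\in\ct_u]$ linearly in imset coordinates, and since ``pattern $v\to u$'' coincides with ``essential $v\to u$'' this pins down one of the two independent counts. But you still owe a second invariant separating ``essential $u\to v$'' (a v-structure sits at $u$ without $v$, forcing $u\to v$ by Meek's rule) from ``essential undirected'', and that indicator is not obviously a linear functional of $c_{\cg(\ct_u)}$. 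The paper avoids this obstacle by working inductively: it first argues that $G_Q$ is chordal bipartite and that $I_{T_u}$ is generated by quadratic lifts of $2\times 2$ minors, and then checks the three-way orientation match only for those specific quadratic generators rather than for arbitrary binomials.
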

\begin{proof}
We will show that $I_{T_u}$ is strongly $Q$-homogeneous since the proof is the same for $I_{T_v}$. For each pattern with skeleton $T_u$, we have one of the following 3 orientations of $e$:
\[u \to v, \quad v \to u, \quad \text{or} \quad u - v.\]
To order the rows and columns of $A_Q$, we consider the essential graph associated to each pattern. Order the rows and columns such that all essential graphs with $e$ oriented $u \to v$ are first, then all essential graphs with $u - v$, and lastly the essential graphs with $v \to u$. The nonzero entries of this matrix are row and column convex. Therefore $G_Q$ is chordal bipartite and all generators from the quasi-independence map are quadratic. Since lifting does not change the degree of a generator, we may assume by induction that $I_{T_u}$ is quadratically generated. Furthermore, the coefficients of any generator are $\pm 1$ since all generators are liftings of $2 \times 2$ determinants. Suppose $f = x_{\patp_1} x_{\patp_2} - x_{\patp_3} x_{\patp_4}$ is a generator of $I_{T_u}$ and without loss of generality, let $(\patp_1,\patp_1'),(\patp_2,\patp_2') \in Q$. The multiset of forced orientations $v \to u$ in each monomial of $f$ must be the same since $f \in I_{T_u}$. Consequently we must have either $(\patp_3,\patp_1'),(\patp_4,\patp_2') \in Q$ or $(\patp_3,\patp_2'),(\patp_4,\patp_1') \in Q.$ Hence $f$ is strongly $Q$-homogeneous.
\end{proof}

We know by Lemma~\ref{lem: CIMP of a star graph} that the CIM ideal of a star graph (and of a path graph up to $4$ vertices) is the trivial ideal. Thus, one can construct the CIM ideal of $I_T$ of any tree $T$ as an iterated QIG of trivial ideals. By Corollary~\ref{corollary:quasiIndepGrobner}, we have the following corollary. %the Gröbner basis of $I_T$ is quadratic and square-free.
\begin{corollary}
\label{cor:treeUniGB}
Let $T$ be a tree. Then there exists a weight vector $\omega$ such that the reduced Gr\"obner basis of $I_T$ with respect to $\omega$ consists of square-free quadratics. Moreover, these quadratics can be explicitly constructed via iterated quasi-independence gluing. 
\end{corollary}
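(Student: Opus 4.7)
The plan is to proceed by strong induction on the number of vertices of $T$, using the parting operation at a non-leaf edge as the recursive step. The base case will be handled by Lemma~\ref{lem: CIMP of a star graph}: whenever $T$ is a star (or the path on at most four vertices), $\CIM_T$ is a simplex, so $I_T$ is the zero ideal and has the empty set as its (trivially square-free quadratic) reduced Gröbner basis. Every tree on at least five vertices that is not a star contains a non-leaf edge $e = u - v$, which drives the inductive step.

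For the inductive step, I would fix such a non-leaf edge $e$ and let $T_u, T_v$ be the parting trees. By Theorem~\ref{thm:cimTreeQIG}, $I_T = I_{T_u} \times_Q I_{T_v}$ with $Q = \{\parting(\ct,e) : \ct \in \meq(T)\}$. Since $|V(T_u)|, |V(T_v)| < |V(T)|$, the induction hypothesis yields square-free quadratic reduced Gröbner bases $F$ and $G$ of $I_{T_u}$ and $I_{T_v}$, with respect to some weight vectors $\omega_1, \omega_2$. By Lemma~\ref{lemma:cimTreeIdealHom}, both $F$ and $G$ are strongly $Q$-homogeneous, and the proof of that lemma also shows that $G_Q$ is chordal bipartite. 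Applying Theorem~\ref{thm: quadratic and square-free condition for quasi-independence ideals}, the cycle basis $H_Q$ consists of square-free quadratics. Corollary~\ref{corollary:quasiIndepGrobner} then supplies a weight vector $\omega$ (obtained as a small perturbation of $(\omega_1,\omega_2)^T B_Q$) for which $\Lift(F) \cup \Lift(G) \cup H_Q$ is a Gröbner basis of $I_T$.

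To confirm the quality of this basis, I would verify that lifting preserves the property of being a square-free quadratic binomial: for any square-free quadratic $x_{j_1} x_{j_2} - x_{j_1'} x_{j_2'}$ in $F$ and any admissible $k = (k_1, k_2)$, the lift $z_{j_1 k_1} z_{j_2 k_2} - z_{j_{\sigma(1)}' k_1} z_{j_{\sigma(2)}' k_2}$ is again a square-free quadratic since the first coordinates of the $z$-indices remain distinct in each monomial. Together with the fact that $H_Q$ is square-free and quadratic, this gives a square-free quadratic Gröbner basis of $I_T$. Passing to the reduced Gröbner basis is harmless: reduction among quadratic binomials can only replace a monomial by another monomial of the same degree (since the leading term of a quadratic binomial divides a quadratic monomial only when the two are equal), and for toric/binomial ideals the reduced Gröbner basis is a set of binomials, whose leading terms are identical to those of the original and whose trailing terms remain square-free quadratic monomials. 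The ``moreover'' clause on explicit construction follows by unwinding the recursion: each step is an explicit QIG computable from the matrix $A_Q$ and the previously computed bases.

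I expect the primary subtlety to lie not in the inductive architecture, which is dictated cleanly by Theorem~\ref{thm:cimTreeQIG} and Corollary~\ref{corollary:quasiIndepGrobner}, but in the bookkeeping needed to keep the term order consistent across iterations. In particular, Corollary~\ref{corollary:quasiIndepGrobner} produces $\omega$ as a perturbation of the weights inherited from the two factors, so at each recursive call the weight must be recomputed; strong $Q$-homogeneity (Lemma~\ref{lemma:cimTreeIdealHom}) is what makes this manageable, since the set of lifts is determined independently of the term order and one only needs to check at the end that the resulting weight induces the correct leading terms on $H_Q$. Once this is done, the remaining checks on square-freeness and degree are routine consequences of the lifting formula in Definition~\ref{defn:lift}.
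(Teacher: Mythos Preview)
Your proposal is correct and follows essentially the same approach as the paper: the paper's argument is simply the observation (placed just before the corollary) that star graphs have trivial characteristic imset ideal, that any tree can be built from stars by iterated parting along non-leaf edges, and that Theorem~\ref{thm:cimTreeQIG}, Lemma~\ref{lemma:cimTreeIdealHom}, and Corollary~\ref{corollary:quasiIndepGrobner} then inductively yield the desired Gr\"obner basis. Your write-up fleshes out this sketch with the explicit induction, the chordal-bipartite check for $H_Q$, the verification that lifting preserves square-freeness and degree, and the passage to the reduced basis, none of which the paper spells out; but the underlying strategy is identical.
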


We close this section with an example involving the path graph. Let $P_n$ denote the path with $n$ vertices and label the vertices by $\{1,\dots,n\}$ in the order of the path. The v-structures on $P_n$ have edges from $i-1$ and $i+1$ to $i$, where the nonleaf vertex is $i \in \{2,\dots,n-1\}$. We cannot have both $i$ and $i+1$ being nonleaf vertices of v-structures in a pattern since the edge containing $i$ and $i+1$ would require two opposing orientations. Consequently we may index variables by sets $S \subseteq \{2,\dots,n-1\}$ of nonconsecutive integers in place of patterns.

%We close this section with an example involving the path graph. Let $P_n$ denote the path with $n$ vertices and label the vertices by $\{1,\dots,n\}$ in the order of the path. The v-structures on $P_n$ can have center vertices in $\{2,\dots,n-1\}$. A v-structure forces edges incident to the center vertex to be direct towards the center. Consequently no two center vertices of v-structures can be labeled by consecutive integers. Rather than index variables by their patterns, we index variables by sets $S \subseteq \{2,\dots,n-1\}$ containing no consecutive integers. 

\begin{example}[6 Vertex Path]
For this example we consider $I_{P_3} \subseteq \kk[x]$, $I_{P_5} \subseteq \kk[y]$, and $I_{P_6} \subseteq \kk[z]$. We glue a path with 3 vertices to $P_5$ to form $P_6$, as in Figure~\ref{fig:6VertexPathGluingRule}. This gluing yields 3 generators from quasi-independence:
\[z_\emptyset z_{25} - z_2z_5, z_\emptyset z_{35} - z_3z_5, z_2z_{35} - z_3 z_{25}.\]

Since $P_3$ is a star graph, $I_{P_3} = 0$, so we only consider lifts of Gr\"obner basis elements in $I_{P_5}$. One can show that the Gr\"obner basis for $I_{P_5}$ consists of just the polynomial $y_\emptyset y_{24} - y_2 y_4$, since it can be seen as the QIG of $I_{P_4}$ with the ideal of the star graph $I_{P_3}$ which are both trivial ideals. Both monomials can lift by the unordered tuples of lower indices $(\emptyset,\emptyset)$ and $(\emptyset,5)$, which result in the following lifts of the binomial:
\[z_\emptyset z_{24} - z_2 z_4, z_5 z_{24} - z_{25} z_4.\]
The 5 binomials we have constructed form a Gr\"obner basis for $I_{P_6}$.
\end{example}

\begin{figure}
\centering
\begin{tikzpicture}[scale = 0.55]

\begin{scope}[shift={(0,2.25)}]
\draw (0.5,-0.866)--(1,0)--(0.5,0.866)--(-0.5,0.866)--(-1,0)--(-0.5,-0.866);

\node at (-0.85,-0.866) {1};
\node at (-1.3,0) {2};
\node at (-0.85,0.866) {3};
\node at (0.85,0.866) {4};
\node at (1.3,0) {5};
\node at (0.85,-0.866) {6};
\end{scope}

\draw [thick] (2,-3.4)--(1.6,-3.4)--(1.6,1.1)--(2,1.1);
\draw [thick] (14,-3.4)--(14.4,-3.4)--(14.4,1.1)--(14,1.1);

%ROW INDICES
\begin{scope}[shift={(0,0)}]
\draw (0.5,-0.866)--(1,0)--(0.5,0.866);

\foreach \r in {1,...,5}{
\draw [shift={(0.5 + 2.5*\r,0)}] (0.5,-0.866)--(1,0)--(0.5,0.866);
}
\end{scope}

\begin{scope}[shift={(0,-2.25)}]
\draw [-Stealth] (0.5,-0.866)--(1,0);
\draw [-Stealth] (0.5,0.866)--(1,0);

\foreach \r in {1,...,3}{
\draw [shift={(0.5 + 2.5*\r,0)},-Stealth] (0.5,-0.866)--(1,0);
\draw [shift={(0.5 + 2.5*\r,0)},-Stealth] (0.5,0.866)--(1,0);
}
\end{scope}

%COLUMN INDICES
\begin{scope}[shift = {(3,0)}]
\foreach \r in {-1,0,1}{
\draw [shift = {(0,2.25*\r)}] (1,0)--(0.5,0.866)--(-0.5,0.866)--(-1,0)--(-0.5,-0.866);
}
\end{scope}

\begin{scope}[shift = {(5.5,0)}]
\foreach \r in {-1,0,1}{
\draw [shift = {(0,2.25*\r)}] (1,0)--(0.5,0.866)--(-0.5,0.866)--(-1,0)--(-0.5,-0.866);
\draw [shift = {(0,2.25*\r)},-Stealth] (-0.5,-0.866)--(-1,0);
\draw [shift = {(0,2.25*\r)},-Stealth] (-0.5,0.866)--(-1,0);
}
\end{scope}

\begin{scope}[shift = {(8,0)}]
\foreach \r in {-1,0,1}{
\draw [shift = {(0,2.25*\r)}] (1,0)--(0.5,0.866)--(-0.5,0.866)--(-1,0)--(-0.5,-0.866);
\draw [shift = {(0,2.25*\r)},-Stealth] (0.5,0.866)--(-0.5,0.866);
\draw [shift = {(0,2.25*\r)},-Stealth] (-1,0)--(-0.5,0.866);
}
\end{scope}

\begin{scope}[shift = {(10.5,0)}]
\foreach \r in {0,1}{
\draw [shift = {(0,2.25*\r)}] (1,0)--(0.5,0.866)--(-0.5,0.866)--(-1,0)--(-0.5,-0.866);
\draw [shift = {(0,2.25*\r)},-Stealth] (-0.5,0.866)--(0.5,0.866);
\draw [shift = {(0,2.25*\r)},-Stealth] (1,0)--(0.5,0.866);
}
\end{scope}

\begin{scope}[shift = {(13,0)}]
\foreach \r in {0,1}{
\draw [shift = {(0,2.25*\r)}] (1,0)--(0.5,0.866)--(-0.5,0.866)--(-1,0)--(-0.5,-0.866);
\draw [shift = {(0,2.25*\r)},-Stealth] (-0.5,0.866)--(0.5,0.866);
\draw [shift = {(0,2.25*\r)},-Stealth] (1,0)--(0.5,0.866);
\draw [shift = {(0,2.25*\r)},-Stealth](-0.5,-0.866)--(-1,0);
\draw [shift = {(0,2.25*\r)},-Stealth](-0.5,0.866)--(-1,0);
}
\end{scope}

%STRUCTURAL ZEROS
\node at (10.5,-2.25) {0};
\node at (13,-2.25) {0};

\end{tikzpicture}
\caption{A QIG yielding the characteristic imset ideal of the path with 6 vertices.}
\label{fig:6VertexPathGluingRule}
\end{figure}
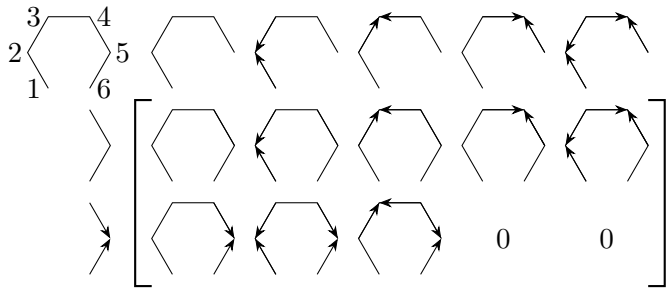

%\section{Characteristic Imset Ideal of the Cycle}
\section{Weak Q-homogeneity, Cycles, and Future Directions}
\label{section:CIMCycle}
In this section we examine the characteristic imset ideal of a cycle. We show that the ideal can be obtained by taking the quasi-independence gluing of two paths but the Gr\"obner basis of the path constructed via QIG is not weakly Q-homogeneous. We end by exploring possible directions for finding a weakly Q-homogeneous Gr\"obner basis for the path. 

We begin with a general gluing of two path graphs, of the form pictured in Figure~\ref{fig:doubleParting}. This is similar to the partings for trees, except we part at two edges. Since the end points of paths are not glued together and since we glue along two edges, each path must have at least four vertices.

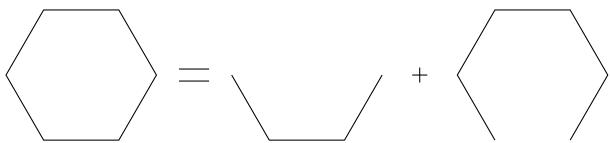
\begin{figure}[h]
\centering
\begin{tikzpicture}
\draw (0.5,-0.866)--(1,0)--(0.5,0.866)--(-0.5,0.866)--(-1,0)--(-0.5,-0.866)--cycle;

\draw (1.3,0.08)--(1.7,0.08);
\draw (1.3,-0.08)--(1.7,-0.08);

\draw [xshift=3cm] (-1,0)--(-0.5,-0.866)--(0.5,-0.866)--(1,0);

\draw (4.5,-0.1)--(4.5,0.1);
\draw (4.4,0)--(4.6,0);

\draw [xshift=6cm] (0.5,-0.866)--(1,0)--(0.5,0.866)--(-0.5,0.866)--(-1,0)--(-0.5,-0.866);
\end{tikzpicture}
\caption{The gluing that we consider in Section~\ref{section:CIMCycle}.}
\label{fig:doubleParting}
\end{figure}

Label $P_n$ as in the previous section, and consider $I_{P_n}$ in variables $y_S$ for $S \subseteq \{2,\dots,n-1\}$ a set of pairwise non-consecutive integers. Let $P_m'$ denote the path on $m$ vertices, which we label $\{n-1,n, n+1, \dots,n+m-4,1,2\}$ in the order of the path. Similar to $P_n$ we index variable $x_S$ by sets $S \subseteq \{n-1,n,\dots,n+m-4,1,2\}$ containing no two integers which are consecutive in the written order. 
We now define a gluing rule $Q_{m,n}$ to be the set of $(\patp',\patp) \in \meq(P_m') \times \meq(P_n)$ such that both of the following hold
\begin{enumerate}
    \item for all v-structures $i \rightarrow k \leftarrow j$ and $i' \rightarrow k' \leftarrow j'$ in $\patp$ or $\patp'$ the numbers $k$ and $k'$ are not cyclically consecutive,
    \item either $\patp'$ or $\patp$ contains a v-structure.
\end{enumerate}

The first condition guarantees that we do not glue edges together with forced opposing orientations while the second condition prevents an additional illegal gluing. Indeed, if we direct the edges of the cycle, the only way that no v-structures can appear is if the edges form a directed cycle, which is not a DAG. Then $I_{C_{n+m-4}}$ is an ideal in the polynomial ring with variables $z_S$ for $S \subseteq \{1,\dots,n+m-4\}$ containing no cyclically consecutive integers modulo $n+m-4$.

\begin{theorem}
\label{thm:cimCycleQIG}
Let $m,n \geq 4$. Then $I_{C_{n+m-4}} = I_{P_m'} \times_{Q_{m,n}} I_{P_n}$.
\end{theorem}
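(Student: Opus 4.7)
The plan is to adapt the proof of Theorem~\ref{thm:cimTreeQIG} to the case of two shared gluing edges. First, I would define a \emph{double parting} of a cycle pattern $\patp \in \meq(C_{n+m-4})$ along the two shared edges $\{n-1,n\}$ and $\{1,2\}$: the pair $(\patp_1,\patp_2) \in \meq(P_m') \times \meq(P_n)$ is produced by restricting $\patp$ to each path and, exactly as in Definition~\ref{defn:parting}, stripping the orientation on each gluing edge in whichever path does not see it as part of a v-structure. I would then argue that $\patp \mapsto (\patp_1,\patp_2)$ is a bijection from $\meq(C_{n+m-4})$ onto $Q_{m,n}$.

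Next, I would establish the factorization of monomial maps. After squaring both parameters $t_{\{n-1,n\}}$ and $t_{\{1,2\}}$ in $\psi_{C_{n+m-4}}$ (which does not change its kernel), it remains to verify
\[
\psi_{C_{n+m-4}}(z_{\patp}) = \psi_{P_m'}(x_{\patp_1}) \cdot \psi_{P_n}(y_{\patp_2})
\]
coordinate by coordinate on star subsets $S \subseteq V(C_{n+m-4})$. A non-shared edge lies in exactly one path; a shared edge ($\{1,2\}$ or $\{n-1,n\}$) lies in both and produces the squared factor on the left; and a three-vertex path $\{k-1,k,k+1\}$ lies in exactly the path in which its center $k$ is a non-leaf (in particular each boundary triple $\{n-2,n-1,n\}$, $\{n-1,n,n+1\}$, $\{n+m-4,1,2\}$, $\{1,2,3\}$ lies in exactly one path). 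For every $S$, the imset value on the cycle agrees with that on the containing path because edge-presence and v-structures at centers are local three-vertex conditions, and the parting rule is designed to preserve orientations exactly when a v-structure at the center in question survives the restriction.

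For the bijection $\meq(C_{n+m-4}) \leftrightarrow Q_{m,n}$, injectivity is immediate since a pattern is determined by its set of v-structure centers. For surjectivity, given $(\patp_1,\patp_2) \in Q_{m,n}$ one glues them along both shared edges; condition~(1) exactly prevents opposing orientations on either gluing edge, while condition~(2) rules out the would-be DAG orientation of the cycle with no v-structures, which is impossible since any DAG whose skeleton is a cycle of length at least four must contain a vertex of in-degree two and hence a v-structure (otherwise every vertex would have in-degree at most one, forcing the edges to form a directed cycle). Combining these steps gives the commutative diagram
\[
\begin{tikzcd}
\kk[z] \arrow[d, "\phi_{Q_{m,n}}"'] \arrow[r, "\psi_{C_{n+m-4}}"] & \kk[t_S] \\
\kk[x, y] \arrow[ur, "\psi_{P_m', P_n}"']
\end{tikzcd}
\]
where $\psi_{P_m', P_n}$ is defined as in the proof of Theorem~\ref{thm:cimTreeQIG}, from which $I_{C_{n+m-4}} = \phi_{Q_{m,n}}^{-1}(I_{P_m'} + I_{P_n}) = I_{P_m'} \times_{Q_{m,n}} I_{P_n}$ follows just as before. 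The main obstacle is establishing the surjectivity onto $Q_{m,n}$: unlike the tree case, gluing along two edges closes into a cycle, so condition~(2) must be shown to be precisely the global acyclicity constraint needed for the glued pattern to lift to a genuine DAG.
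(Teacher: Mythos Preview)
Your proposal is correct and follows exactly the route the paper takes: it adapts the commutative-diagram argument of Theorem~\ref{thm:cimTreeQIG}, squaring the two shared-edge parameters $t_{\{1,2\}}$ and $t_{\{n-1,n\}}$ in place of one. The paper's own proof consists of a single sentence deferring to Theorem~\ref{thm:cimTreeQIG} and noting the double squaring; you have simply written out the details it suppresses, in particular the bijection $\meq(C_{n+m-4}) \leftrightarrow Q_{m,n}$ and the reason condition~(2) is precisely the acyclicity obstruction.
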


\begin{proof}
The proof is analogous to the proof of Theorem~\ref{thm:cimTreeQIG}. The main difference is that we must square both $t_{12}$ and $t_{n-1,n}$, since we glue along two edges.
\end{proof}

The previous theorem shows that the cycle is a quasi-independence gluing but to compute a Gr\"obner basis with Theorem \ref{thm:gbofquasi-independence}
we also need that there are Gr\"obner bases for both $I_{P_m'}$ and $I_{P_n}$ which are weakly $Q_{m,n}$-homogeneous with respect to some term order. For the remainder of this section we investigate combinatorial and polyhedral restrictions on the weight order that are imposed by weak $Q_{m,n}$-homogeneity. 

\begin{lemma}
\label{lemma:emptySetQHom}
Let $I_{P_n} \subseteq \kk[y_S]$,  $\omega$ be a weight order on $\kk[y]$, and $F$ be a Gr\"obner basis for $I_{P_n}$ with respect to $\omega$. If $I_{P_n}$ is weakly $Q_{m,n}$-homogeneous with respect to $\omega$ then for every binomial in $F$ of the form
$
y_{\emptyset}y_{S_2} \ldots y_{S_k} - y_{T_1}y_{T_2} \ldots y_{T_k}
$
where $T_i \neq \emptyset$, it holds that
\[
y_{T_1}y_{T_2} \ldots y_{T_k}  \leq_w y_{\emptyset}y_{S_2} \ldots y_{S_k}. 
\]
\end{lemma}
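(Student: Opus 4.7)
The plan is to argue by contrapositive: I will show that if the leading monomial of such a binomial were $y_{T_1}\cdots y_{T_k}$ with each $T_i \neq \emptyset$, then the binomial would fail to be weakly $Q_{m,n}$-homogeneous with respect to $\omega$. The crux is an asymmetry, coming from condition (2) in the definition of $Q_{m,n}$, between the $Q_{m,n}$-neighborhood of the empty pattern and that of any nonempty pattern.

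To make this precise, for each $S \in \meq(P_n)$ I set $N(S) := \{\patp' \in \meq(P_m') : (\patp', \patp_S) \in Q_{m,n}\}$, which plays the role of the slice appearing in the definition of weak $Q$-homogeneity applied on the second-coordinate side of $Q_{m,n}$. Two elementary observations, read directly off the definition of $Q_{m,n}$, do the work. First, $N(\emptyset) = \meq(P_m') \setminus \{\emptyset\}$: the cyclic non-adjacency condition (1) is vacuous when $S = \emptyset$, while condition (2) forces $\patp'$ to contain a v-structure. Second, $\emptyset \in N(T)$ for every $T \neq \emptyset$: condition (2) is already satisfied by $\patp_T$, and condition (1) is vacuous for $\patp' = \emptyset$. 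Combining these, $\emptyset \in N(T) \setminus N(\emptyset)$ whenever $T \neq \emptyset$.

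Now assume for contradiction that $\init_\omega(f) = y_{T_1}\cdots y_{T_k}$, and write the trailing monomial as $y_{R_1}y_{R_2}\cdots y_{R_k}$ with $R_1 = \emptyset$ and $R_j = S_j$ for $j \geq 2$. Weak $Q_{m,n}$-homogeneity provides a permutation $\sigma \in \mathfrak{S}_k$ such that $N(T_\ell) \subseteq N(R_{\sigma(\ell)})$ for every $\ell$. At the position $\ell^* := \sigma^{-1}(1)$ this reads $N(T_{\ell^*}) \subseteq N(\emptyset)$, which is impossible because $\emptyset \in N(T_{\ell^*}) \setminus N(\emptyset)$. Thus $\init_\omega(f)$ must be the monomial containing $y_\emptyset$, which is the claim. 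The only mildly nontrivial step is the bookkeeping in the definition of $Q_{m,n}$ needed to identify the one-sided neighborhood inclusion above; once that is in hand, the definition of weak $Q$-homogeneity yields the contradiction immediately.
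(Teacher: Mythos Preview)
Your proof is correct and follows essentially the same approach as the paper's: both arguments hinge on the fact that, by condition~(2) in the definition of $Q_{m,n}$, the variable $y_\emptyset$ is the unique one that cannot be paired with the empty pattern on $P_m'$, so the tuple $(\emptyset,\ldots,\emptyset)$ lifts the $T$-monomial but not the monomial containing $y_\emptyset$. Your neighborhood notation $N(S)$ and the per-coordinate inclusion are a slightly more formal repackaging of this same observation; the paper simply exhibits the witnessing lift directly.
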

\begin{proof}
First, note that the variable $y_{\emptyset}$ is the only variable which does not lift by $\emptyset$. Thus if every $T_i \neq \emptyset$ then the monomial $y_{T_1}y_{T_2} \ldots y_{T_k}$ lifts by indices $(\emptyset, \emptyset, \ldots, \emptyset)$ but the monomial $y_{\emptyset}y_{S_2} \ldots y_{S_k}$ does not. Thus $F$ can only be weakly $Q_{m,n}$-homogeneous if
$y_{\emptyset}y_{S_2} \ldots y_{S_k}$ is the leading term. 
\end{proof}

Lemma~\ref{lemma:emptySetQHom} implies the following restriction on the sizes of the paths that we glue together.

\begin{proposition}
Suppose that $m \geq 5$ and $n \geq 5$. Then there is no term order such that $I_{P_n}$ is weakly $Q_{m,n}$-homogeneous.     
\end{proposition}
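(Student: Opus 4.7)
The plan is to exhibit a specific degree-$2$ binomial in $I_{P_n}$ that must appear in every weakly $Q_{m,n}$-homogeneous Gr\"obner basis of $I_{P_n}$, and then show that its leading term, forced by Lemma~\ref{lemma:emptySetQHom}, admits two liftings which together rule out both possible permutations in the definition of weak $Q_{m,n}$-homogeneity. Since $n \geq 5$, the subset $\{2, n-1\}$ of $\{2,\ldots,n-1\}$ is non-consecutive, so the variable $y_{\{2,n-1\}}$ is defined, and a direct check of the toric parameterization of $I_{P_n}$ shows that
\[
b := y_\emptyset y_{\{2, n-1\}} - y_{\{2\}} y_{\{n-1\}} \in I_{P_n}.
\]

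Next, suppose for contradiction that $F$ is a weakly $Q_{m,n}$-homogeneous Gr\"obner basis of $I_{P_n}$ with respect to some weight $\omega$. Passing to a minimal subset if necessary, assume $F$ is a minimal Gr\"obner basis. Since $I_{P_n}$ is toric and hence contains no linear forms, $\init_\omega(b)$ must equal $\init_\omega(g)$ for some binomial $g \in F$ of degree $2$. The trailing monomial of $g$ has the same toric image under $\psi_{P_n}$ as $\init_\omega(b)$, and inspection of $\psi_{P_n}$ shows that the only degree-$2$ monomials with this image are the two monomials of $b$, forcing $g = \pm b$. Applying Lemma~\ref{lemma:emptySetQHom} to this $g$ then forces $\init_\omega(b) = y_\emptyset y_{\{2, n-1\}}$.

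Finally, I examine the two liftings $(T_1, T_2) = (\{1\}, \emptyset)$ and $(T_1, T_2) = (\{n\}, \emptyset)$ of this leading term. Both are valid: since $m \geq 5$, both $\{1\}$ and $\{n\}$ are non-consecutive singletons in the interior of $P_m'$; the condition $T_1 \neq \emptyset$ imposed by the $y_\emptyset$ slot is met; and $T_2 = \emptyset$ trivially meets the conditions imposed by $y_{\{2, n-1\}}$. Under the identity permutation, the first lifting fails to match the trailing monomial $y_{\{2\}} y_{\{n-1\}}$, because $1$ is cyclically consecutive to $2$ on $C_{n+m-4}$ and hence cannot appear as a v-structure center in the $y_{\{2\}}$ slot; under the nontrivial transposition, the second lifting fails for the symmetric reason that $n$ is cyclically consecutive to $n-1$ and cannot appear in the $y_{\{n-1\}}$ slot. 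Thus the two liftings demand opposite choices of $\sigma$, so no single permutation can realize the inclusion of lifting sets required by weak $Q_{m,n}$-homogeneity, contradicting the choice of $F$.

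The main obstacle is the second step, where one must argue that this specific binomial $b$ is in fact forced into any minimal weakly $Q_{m,n}$-homogeneous Gr\"obner basis. This rests on both the toric (prime) structure of $I_{P_n}$, which rules out linear generators, and on identifying the complete set of degree-$2$ monomials mapping to $\psi_{P_n}(y_\emptyset y_{\{2, n-1\}})$; once these two ingredients are in place, Lemma~\ref{lemma:emptySetQHom} together with the explicit lifting obstruction does the rest.
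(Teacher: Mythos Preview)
Your proof is correct and follows essentially the same strategy as the paper: you single out the binomial $b = y_\emptyset y_{\{2,n-1\}} - y_{\{2\}} y_{\{n-1\}}$, argue it must appear in any Gr\"obner basis, use Lemma~\ref{lemma:emptySetQHom} to force the leading term, and then show that no permutation $\sigma$ lets the trailing monomial absorb all lifts of the leading term.

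The one difference worth noting is in the obstruction step. The paper uses a \emph{single} lift $(\{1,n\},\emptyset)$: since $1$ is consecutive to $2$ and $n$ is consecutive to $n-1$, neither $y_{\{2\}}$ nor $y_{\{n-1\}}$ can sit in the slot receiving $\{1,n\}$, so both permutations are killed at once. You instead use \emph{two} separate lifts $(\{1\},\emptyset)$ and $(\{n\},\emptyset)$, the first ruling out the identity and the second ruling out the transposition. Both arguments are valid, but the paper's version is where the hypothesis $m\geq 5$ is genuinely consumed: the pair $\{1,n\}$ indexes a pattern on $P_m'$ only when $1$ and $n$ are non-adjacent interior vertices, which first happens at $m=5$. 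Your singleton lifts already exist for $m=4$, so your stated justification (``since $m\geq 5$, both $\{1\}$ and $\{n\}$ are non-consecutive singletons in the interior of $P_m'$'') does not actually use the hypothesis. This does not affect correctness for the proposition as stated, but it does mean your argument does not isolate $m\geq 5$ as the threshold in the way the paper's does.

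One small tightening: you assume the element $g\in F$ with $\init_\omega(g)=\init_\omega(b)$ is a binomial. Strictly speaking a Gr\"obner basis element need not be a binomial, but since $I_{P_n}$ is homogeneous and $y_{\{2\}}y_{\{n-1\}}$ is the unique degree-$2$ monomial sharing the toric image of $y_\emptyset y_{\{2,n-1\}}$, any such $g$ must contain $y_{\{2\}}y_{\{n-1\}}$ as a term, and your lifting obstruction then applies to that term of $g$. The paper makes the same implicit assumption.
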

\begin{proof}
Since $n \geq 5$, $I_{P_n} \neq 0$ and we have the following generator of $I_{P_n}$:
\[\underline{y_\emptyset y_{2,n-1}} - y_2 y_{n-1}.\]
Observe that every Gr\"obner basis of $I_{P_n}$ must contain this binomial since no other binomial in the ideal contains either of these monomials. By Lemma~\ref{lemma:emptySetQHom} the underlined term must be the leading term.
 
%Regardless of whether $m \geq 5$, we can lift $y_2 y_{n-1}$ by indices $(\emptyset, \emptyset)$ to obtain the monomial $z_2 z_{n-1}$ in ambient ring of $I_{C_{n+m-4}}$. However, $y_\emptyset$ can only be lifted by indices $S \neq \emptyset$ since v-structures must exist in the cycle. Consequently we see that $y_\emptyset y_{2,n-1}$ must be the leading monomial if this binomial is to be weakly $Q_{m,n}$-homogeneous.

But if $m \geq 5$, then a pattern on $P'_m$ could have v-structures at both end points $\{1,n\}$ of $P_n$. We can then lift $y_\emptyset y_{2,n-1}$ by $(\{1,n\},\emptyset)$ to $z_{1,n} z_{2,n-1}$. However, both $2$ and $n-1$ are adjacent to some element of $\{1,n\}$, and so the other monomial does not lift. Since each monomial lifts by an indexing set that the other does not lift by, this polynomial cannot be weakly $Q_{m,n}$-homogeneous with respect to any term order.   
\end{proof}

We consequently restrict our focus to $m=4$. We suppress $m$ from the notation, instead considering the path $P'$ of length $4$ and weak $Q_n$-homogeneity. Note that $\CIM_{P'}$ is a simplex, and so $I_{P'} = 0$. So we need only investigate weak $Q_n$-homogeneity for a Gr\"obner basis of $I_{P_n}$. We now show that any term order producing a weakly $Q_n$-homogeneous Gr\"obner basis cannot be constructed via the iterated QIG in Section \ref{section:CIMTree}.

\begin{lemma}
For $n \geq 6$, there does not exist a Gr\"obner basis of $I_{P_n}$ produced by iterated QIG that is also weakly $Q_n$-homogeneous.
\end{lemma}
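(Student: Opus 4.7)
The plan is to exhibit a specific binomial that lies in $I_{P_n}$, must appear in any iterated-QIG Gr\"obner basis of $I_{P_n}$, and fails to be weakly $Q_n$-homogeneous under every term order. The candidate is
\[
f \;=\; y_\emptyset\, y_{\{2,\,n-1\}} \;-\; y_{\{2\}}\, y_{\{n-1\}},
\]
which is well-defined for $n \geq 6$ since $\{2,\,n-1\}$ is a pair of non-consecutive integers in $\{2, \ldots, n-1\}$.

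First, I would verify that $f \in I_{P_n}$ using the formula
\[
\psi_{P_n}(y_S) \;=\; \Bigl(\prod_{e \in E(P_n)} t_e\Bigr) \cdot \prod_{s \in S} t_{\{s-1,\,s,\,s+1\}},
\]
which comes directly from the definition of the characteristic imset of a pattern on a path with v-structure centers $S$. Both monomials of $f$ map to $\bigl(\prod_e t_e\bigr)^{2} \cdot t_{\{1,2,3\}} \cdot t_{\{n-2,n-1,n\}}$, so $f \in \ker \psi_{P_n} = I_{P_n}$.

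Next, I would show that any iterated-QIG Gr\"obner basis $G$ of $I_{P_n}$ contains $f$ up to sign. By Corollary~\ref{cor:treeUniGB}, $G$ consists of square-free quadratic binomials with $\pm 1$ coefficients. For any term order the leading term of $f$ is divisible by the leading term of some $g \in G$; since $I_{P_n}$ contains no linear forms, $g$ is quadratic and its leading term equals that of $f$. The trailing term of $g$ is then a second square-free quadratic monomial in the same $\psi_{P_n}$-fiber. Because the map $s \mapsto \{s-1,s,s+1\}$ is injective, the displayed formula shows that the only square-free quadratic monomials in $\kk[y]$ mapping to $\bigl(\prod_e t_e\bigr)^{2} \cdot t_{\{1,2,3\}} \cdot t_{\{n-2,n-1,n\}}$ are the two monomials of $f$. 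Hence $g = \pm f$.

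Finally I would check that $f$ is not weakly $Q_n$-homogeneous under either choice of leading term. By Lemma~\ref{lemma:emptySetQHom} the leading term must be $y_\emptyset y_{\{2,n-1\}}$, otherwise weak $Q_n$-homogeneity already fails. Using $\meq(P') = \{\emptyset, \{1\}, \{n\}\}$, condition $(2)$ of $Q_n$ forces the $P'$-index for $y_\emptyset$ to lie in $\{\{1\}, \{n\}\}$, while condition $(1)$ rules out both $\{1\}$ and $\{n\}$ for $y_{\{2,n-1\}}$ because $1$ is cyclically consecutive to $2$ and $n$ to $n-1$ in the cycle $C_n$. Hence the leading monomial has exactly the two lifts $(\{1\}, \emptyset)$ and $(\{n\}, \emptyset)$. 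For the non-leading monomial $y_{\{2\}} y_{\{n-1\}}$, the identity permutation fails on $(\{1\}, \emptyset)$ because $(\{2\}, \{1\}) \notin Q_n$, and the transposition fails on $(\{n\}, \emptyset)$ because $(\{n-1\}, \{n\}) \notin Q_n$. No permutation works, so $f$ cannot be weakly $Q_n$-homogeneous, and the lemma follows.

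The main technical obstacle is the fiber-uniqueness argument in the second step; once it is established, the remainder is a direct combinatorial verification of the conditions defining $Q_n$.
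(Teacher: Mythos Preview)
Your proof is correct, and it takes a genuinely different route from the paper's.

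The paper works with the binomial $z_{\{n-1\}}z_{\{2,4\}}-z_{\{2,n-1\}}z_{\{4\}}$. Its argument hinges on a structural feature of iterated QIG: this binomial and $z_\emptyset z_{\{2,4\}}-z_{\{2\}}z_{\{4\}}$ are both lifts of the same element $y_\emptyset y_{\{2,4\}}-y_{\{2\}}y_{\{4\}}\in I_{P_{n-1}}$, so their leading terms are \emph{linked} by the lifting construction. Lemma~\ref{lemma:emptySetQHom} forces the leading term of the second binomial, which in turn forces the leading term of the first, and then one checks that the first binomial fails weak $Q_n$-homogeneity because its leading monomial lifts by $(\{1\},\{n\})$ while its trailing monomial does not.

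You instead use $f=y_\emptyset y_{\{2,n-1\}}-y_{\{2\}}y_{\{n-1\}}$, the same binomial appearing in the proof of the preceding Proposition. Your fiber-uniqueness argument shows that $f$ lies in \emph{any} square-free quadratic Gr\"obner basis of $I_{P_n}$, so in particular in every QIG-produced one, regardless of the order in which edges were glued. You then check directly that neither permutation makes $f$ weakly $Q_n$-homogeneous. This is more self-contained: you never use the linked-leading-terms property, and you avoid any implicit assumption about which edge is glued last. As a bonus, your argument actually proves the stronger statement that no square-free quadratic Gr\"obner basis of $I_{P_n}$ is weakly $Q_n$-homogeneous.

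One small point: in your final paragraph the pairs $(\{2\},\{1\})$ and $(\{n-1\},\{n\})$ have their coordinates reversed relative to the convention $Q_n\subseteq\meq(P')\times\meq(P_n)$; you mean $(\{1\},\{2\})\notin Q_n$ and $(\{n\},\{n-1\})\notin Q_n$. The substance of the argument is unaffected.
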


\begin{proof}
Observe that every Gr\"obner basis of $I_{P_{n-1}}$ produced by iterative QIG contains the polynomial $y_\emptyset y_{24} - y_2 y_4$. Let $\omega$ be any weight such that $y_2 y_4 \leq_\omega y_\emptyset y_{24}$ since by Lemma \ref{lemma:emptySetQHom} we know $\omega$ must select monomials with $y_\emptyset$ as the leading term. Then QIG produces the following two lifts in $I_{P_n}$:
\[\underline{z_\emptyset z_{24}} - z_2 z_4 \quad \text{and} \quad \underline{z_{n-1} z_{24}} - z_{2,n-1} z_4.\]
Lifting does not change the leading term (regardless of the term order chosen on $I_{P'}$), and so the underlined terms are leading terms. 

Now we consider lifting the polynomial  $z_{n-1} z_{24} - z_{2,n-1} z_4 \in I_{P_n}$ to $I_{C_n}$. The leading monomial $z_{n-1} z_{24}$ lifts by indices $\{(\emptyset, \emptyset), (\emptyset,n), (1,\emptyset),
(1,n)\}$ while the the trailing monomial $z_{2,n-1} z_4$ lifts by indices $\{(\emptyset, \emptyset), (\emptyset,n), (1,\emptyset)\}$. Consequently $z_{n-1}z_{24} - z_{2,n-1}z_4$ is not weakly $Q_n$-homogeneous.
\end{proof}

Figure~\ref{fig:hexagonGluingRule} illustrates Lemma~\ref{lemma:emptySetQHom}. The monomial map of the characteristic imset ideal factors according to $Q_n$, but weak $Q_n$-homogeneity does not hold for the Gr\"obner basis of the path that we constructed in Section~\ref{section:CIMTree}.

\begin{figure}
\centering
\begin{tikzpicture}[scale = 0.55]

\begin{scope}[shift={(0,2.25)}]
\draw (0.5,-0.866)--(1,0)--(0.5,0.866)--(-0.5,0.866)--(-1,0)--(-0.5,-0.866)--cycle;

\node at (-0.85,-0.866) {1};
\node at (-1.3,0) {2};
\node at (-0.85,0.866) {3};
\node at (0.85,0.866) {4};
\node at (1.3,0) {5};
\node at (0.85,-0.866) {6};
\end{scope}

\draw [thick] (2,-5.6)--(1.6,-5.6)--(1.6,1.1)--(2,1.1);
\draw [thick] (21.5,-5.6)--(21.9,-5.6)--(21.9,1.1)--(21.5,1.1);

%ROW INDICES
\foreach \r in {0,...,2}{
\begin{scope}[shift = {(0,-2.25*\r)}]
\draw (-1,0)--(-0.5,-0.866)--(0.5,-0.866)--(1,0);
\end{scope}
}

\begin{scope}[shift = {(0,-2.25)}]
\draw [-Stealth] (-0.5,-0.866)--(0.5,-0.866);
\draw [Stealth-] (0.5,-0.866)--(1,0);
\end{scope}

\begin{scope}[shift = {(0,-4.5)}]
\draw [-Stealth] (0.5,-0.866)--(-0.5,-0.866);
\draw [Stealth-] (-0.5,-0.866)--(-1,0);
\end{scope}

\begin{scope}[shift = {(0.5,0)}]

%COLUMN INDICES
\foreach \r in {1,...,8}{
\begin{scope}[shift = {(2.5*\r,2.25)}]
\draw (0.5,-0.866)--(1,0)--(0.5,0.866)--(-0.5,0.866)--(-1,0)--(-0.5,-0.866);
\end{scope}
}

\begin{scope}[shift = {(5,2.25)}]
\draw [Stealth-] (0.5,0.866)--(1,0);
\draw [Stealth-] (0.5,0.866)--(-0.5,0.866);
\end{scope}

\begin{scope}[shift = {(7.5,2.25)}]
\draw [Stealth-] (-0.5,0.866)--(-1,0);
\draw [Stealth-] (-0.5,0.866)--(0.5,0.866);
\end{scope}

\begin{scope}[shift = {(10,2.25)}]
\draw [-Stealth] (0.5,-0.866)--(1,0);
\draw [-Stealth] (0.5,0.866)--(1,0);
\end{scope}

\begin{scope}[shift = {(12.5,2.25)}]
\draw [Stealth-] (-0.5,0.866)--(-1,0);
\draw [Stealth-] (-0.5,0.866)--(0.5,0.866);
\draw [-Stealth] (0.5,-0.866)--(1,0);
\draw [-Stealth] (0.5,0.866)--(1,0);
\end{scope}

\begin{scope}[shift = {(15,2.25)}]
\draw [-Stealth] (-0.5,0.866)--(-1,0);
\draw [-Stealth] (-0.5,0.866)--(0.5,0.866);
\draw [-Stealth] (-0.5,-0.866)--(-1,0);
\draw [Stealth-] (0.5,0.866)--(1,0);
\end{scope}

\begin{scope}[shift = {(17.5,2.25)}]
\draw [-Stealth] (-0.5,-0.866)--(-1,0);
\draw [-Stealth] (-0.5,0.866)--(-1,0);
\end{scope}

\begin{scope}[shift = {(20,2.25)}]
\draw [-Stealth] (-0.5,0.866)--(-1,0);

\draw [-Stealth] (-0.5,-0.866)--(-1,0);
\draw [-Stealth] (0.5,0.866)--(1,0);
\draw [-Stealth] (0.5,-0.866)--(1,0);
\end{scope}

%ROW 1
\foreach \r in {2,...,8}{
\begin{scope}[shift = {(2.5*\r,0)}]
\draw (0.5,-0.866)--(1,0)--(0.5,0.866)--(-0.5,0.866)--(-1,0)--(-0.5,-0.866)--cycle;
\end{scope}
}

\begin{scope}[shift = {(2.5,0)}]
\node at (0,0) {0};
\end{scope}

\begin{scope}[shift = {(5,0)}]
\draw [Stealth-] (0.5,0.866)--(1,0);
\draw [Stealth-] (0.5,0.866)--(-0.5,0.866);
\end{scope}

\begin{scope}[shift = {(7.5,0)}]
\draw [Stealth-] (-0.5,0.866)--(-1,0);
\draw [Stealth-] (-0.5,0.866)--(0.5,0.866);
\end{scope}

\begin{scope}[shift = {(10,0)}]
\draw [-Stealth] (0.5,-0.866)--(1,0);
\draw [-Stealth] (0.5,0.866)--(1,0);
\end{scope}

\begin{scope}[shift = {(12.5,0)}]
\draw [Stealth-] (-0.5,0.866)--(-1,0);
\draw [Stealth-] (-0.5,0.866)--(0.5,0.866);
\draw [-Stealth] (0.5,-0.866)--(1,0);
\draw [-Stealth] (0.5,0.866)--(1,0);
\end{scope}

\begin{scope}[shift = {(15,0)}]
\draw [-Stealth] (-0.5,0.866)--(-1,0);
\draw [-Stealth] (-0.5,0.866)--(0.5,0.866);
\draw [-Stealth] (-0.5,-0.866)--(-1,0);
\draw [Stealth-] (0.5,0.866)--(1,0);
\end{scope}

\begin{scope}[shift = {(17.5,0)}]
\draw [-Stealth] (-0.5,-0.866)--(-1,0);
\draw [-Stealth] (-0.5,0.866)--(-1,0);
\end{scope}

\begin{scope}[shift = {(20,0)}]
\draw [-Stealth] (-0.5,0.866)--(-1,0);

\draw [-Stealth] (-0.5,-0.866)--(-1,0);
\draw [-Stealth] (0.5,0.866)--(1,0);
\draw [-Stealth] (0.5,-0.866)--(1,0);
\end{scope}

%ROW 2
\foreach \r in {1,2,3,6,7}{
\begin{scope}[shift = {(2.5*\r,-2.25)}]
\draw (0.5,-0.866)--(1,0)--(0.5,0.866)--(-0.5,0.866)--(-1,0)--(-0.5,-0.866)--cycle;
\end{scope}
}

\begin{scope}[shift = {(2.5,-2.25)}]
\draw [-Stealth] (-0.5,-0.866)--(0.5,-0.866);
\draw [-Stealth] (1,0)--(0.5,-0.866);
\end{scope}

\begin{scope}[shift = {(5,-2.25)}]
\draw [Stealth-] (0.5,0.866)--(1,0);
\draw [Stealth-] (0.5,0.866)--(-0.5,0.866);
\draw [-Stealth] (-0.5,-0.866)--(0.5,-0.866);
\draw [-Stealth] (1,0)--(0.5,-0.866);
\end{scope}

\begin{scope}[shift = {(7.5,-2.25)}]
\draw [Stealth-] (-0.5,0.866)--(-1,0);
\draw [Stealth-] (-0.5,0.866)--(0.5,0.866);
\draw [-Stealth] (-0.5,-0.866)--(0.5,-0.866);
\draw [-Stealth] (1,0)--(0.5,-0.866);
\end{scope}

\begin{scope}[shift = {(10,-2.25)}]
\node at (0,0) {0};
\end{scope}

\begin{scope}[shift = {(12.5,-2.25)}]
\node at (0,0) {0};
\end{scope}

\begin{scope}[shift = {(15,-2.25)}]
\draw [-Stealth] (-0.5,0.866)--(-1,0);
\draw [-Stealth] (-0.5,0.866)--(0.5,0.866);
\draw [-Stealth] (-0.5,-0.866)--(-1,0);
\draw [Stealth-] (0.5,0.866)--(1,0);
\draw [-Stealth] (-0.5,-0.866)--(0.5,-0.866);
\draw [-Stealth] (1,0)--(0.5,-0.866);
\end{scope}

\begin{scope}[shift = {(17.5,-2.25)}]
\draw [-Stealth] (-0.5,-0.866)--(-1,0);
\draw [-Stealth] (-0.5,0.866)--(-1,0);
\draw [-Stealth] (-0.5,-0.866)--(0.5,-0.866);
\draw [-Stealth] (1,0)--(0.5,-0.866);
\end{scope}

\begin{scope}[shift = {(20,-2.25)}]
\node at (0,0) {0};
\end{scope}

%ROW 3
\foreach \r in {1,2,3,4,5}{
\begin{scope}[shift = {(2.5*\r,-4.5)}]
\draw (0.5,-0.866)--(1,0)--(0.5,0.866)--(-0.5,0.866)--(-1,0)--(-0.5,-0.866)--cycle;
\end{scope}
}

\begin{scope}[shift = {(2.5,-4.5)}]
\draw [-Stealth] (0.5,-0.866)--(-0.5,-0.866);
\draw [-Stealth] (-1,0)--(-0.5,-0.866);
\end{scope}

\begin{scope}[shift = {(5,-4.5)}]
\draw [Stealth-] (0.5,0.866)--(1,0);
\draw [Stealth-] (0.5,0.866)--(-0.5,0.866);
\draw [-Stealth] (0.5,-0.866)--(-0.5,-0.866);
\draw [-Stealth] (-1,0)--(-0.5,-0.866);
\end{scope}

\begin{scope}[shift = {(7.5,-4.5)}]
\draw [Stealth-] (-0.5,0.866)--(-1,0);
\draw [Stealth-] (-0.5,0.866)--(0.5,0.866);
\draw [-Stealth] (0.5,-0.866)--(-0.5,-0.866);
\draw [-Stealth] (-1,0)--(-0.5,-0.866);
\end{scope}

\begin{scope}[shift = {(10,-4.5)}]
\draw [-Stealth] (0.5,-0.866)--(1,0);
\draw [-Stealth] (0.5,0.866)--(1,0);
\draw [-Stealth] (0.5,-0.866)--(-0.5,-0.866);
\draw [-Stealth] (-1,0)--(-0.5,-0.866);
\end{scope}

\begin{scope}[shift = {(12.5,-4.5)}]
\draw [-Stealth] (0.5,0.866)--(1,0);
\draw [-Stealth] (0.5,0.866)--(-0.5,0.866);
\draw [-Stealth] (0.5,-0.866)--(1,0);
\draw [Stealth-] (-0.5,0.866)--(-1,0);
\draw [-Stealth] (0.5,-0.866)--(-0.5,-0.866);
\draw [-Stealth] (-1,0)--(-0.5,-0.866);
\end{scope}

\begin{scope}[shift = {(15,-4.5)}]
\node at (0,0) {0};
\end{scope}

\begin{scope}[shift = {(17.5,-4.5)}]
\node at (0,0) {0};
\end{scope}

\begin{scope}[shift = {(20,-4.5)}]
\node at (0,0) {0};
\end{scope}
\end{scope}

\end{tikzpicture}
\caption{A QIG that yields the characteristic imset ideal of the cycle with 6 vertices. One can visually check that there is no term order constructed by QIG such that $z_\emptyset z_{24} - y_2 y_4$ and $z_5z_{24} - z_{25}z_4$ are not both weakly $Q_6$-homogeneous.}
    \label{fig:hexagonGluingRule}
\end{figure}
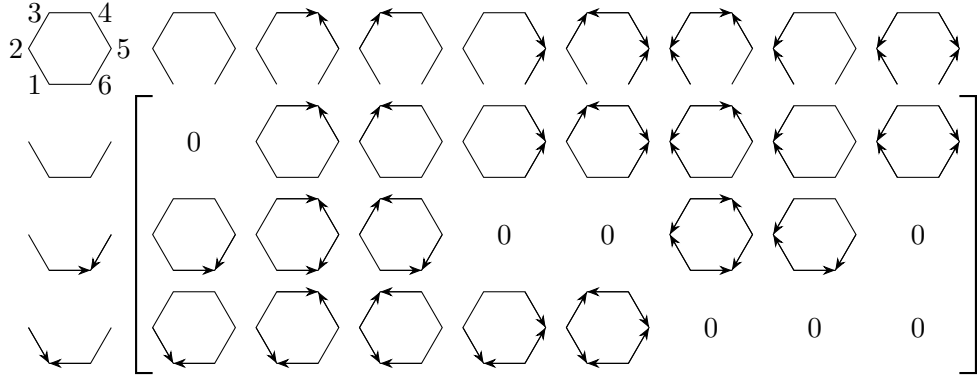

The previous lemma shows that it is impossible for iterative quasi-independence gluing to produce a Gr\"obner basis for $I_{P_n}$ which can then be used to compute $I_{C_n}$. 
However, our computations for $n=6,7$ suggest that there do exist Gr\"obner bases for $I_{P_n}$ which are weakly $Q_n$-homogeneous. 

\begin{problem}
Find a term order $\omega$ on $I_{P_n}$ such that a Gr\"obner basis $F$ of $I_{P_n}$ with respect to $\omega$ is weakly $Q_n$-homogeneous. 
\end{problem}

A solution to this problem would allow one to construct the Gr\"obner basis for the cycle ideal $I_{C_n}$ using Theorem \ref{thm:quasiIndepPseudoGrobner}. One potential first step is to begin with a Gr\"obner basis $G$ produced by QIG and then construct a new term order $\omega$, which does not come from QIG, such that $G$ is a weakly $Q_n$-homogeneous generating set with respect to $\omega$. Since weak $Q_n$-homogeneity corresponds to homogeneous linear inequalities on $\omega$, finding a term order $\omega$ just amounts to finding an interior point in the corresponding cone. The existence of such a term order would immediately allow one to find a generating set for $I_{C_n}$ using QIG. To find a Gr\"obner basis one would instead need to compute a Gr\"obner basis for $I_{P_n}$ with respect to the term order $\omega$ and certify that any new polynomials which arise in the generating set are also weakly $Q_n$-homogeneous with respect to $\omega$. 

Another direction for further work is finding other classes of ideals which arise via quasi-independence gluing. It would be particularly interesting to find ideals which arise via weakly $Q$-homogeneous quasi-independence gluing. This is summarized in the following question. 

\begin{question}
Are there other interesting families of ideals which can be constructed via iterated quasi-independence gluing?
\end{question}

\section*{Acknowledgements}
We are grateful to the organizers of
Algebraic Statistics Workshop 2022 at University of Hawai'i at Manoa, Honolulu, for fostering an excellent research atmosphere that enabled the authors of this paper to collaborate.
We would like to thank Seth Sullivant for his comments on an earlier version of this manuscript. Joseph Johnson was partially supported by the North Carolina State University Graduate School Summer Fellowship. Liam Solus was partially supported by the Wallenberg Autonomous Systems and Software Program (WASP) funded by the Knut and Alice
Wallenberg Foundation, the G\"oran Gustafsson Prize for Young Researchers, and Starting Grant No.\ 2019-05195 from The Swedish Research Council (Vetenskapsr\aa{}det).

\bibliography{refs.bib}{}
\bibliographystyle{plain}
\end{document}